\definecolor{darkblue}{rgb}{0.0,0.0,0.3}
\theoremstyle{plain}
\newtheorem{theorem}{Theorem}[section]
\newtheorem*{theorem*}{Theorem}
\newtheorem{lemma}[theorem]{Lemma}
\newtheorem{proposition}[theorem]{Proposition}
\newtheorem*{proposition*}{Proposition}
\newtheorem{corollary}[theorem]{Corollary}
\newtheorem*{corollary*}{Corollary}
\theoremstyle{definition}
\newtheorem{remark}[theorem]{Remark}
\newtheorem{definition}[theorem]{Definition}
\numberwithin{equation}{section}
\newcommand{\Mod}[1]{\ (\mathrm{mod}\ #1)}
\DeclareMathOperator{\Disc}{Disc}
\DeclareMathOperator{\LDisc}{LDisc}
\DeclareMathOperator{\Gal}{Gal}
\DeclareMathOperator{\Ht}{ht}  
\title{Quantitative Hilbert irreducibility and almost prime values of polynomial discriminants}
\author[T.C. Anderson]{Theresa C. Anderson}
\author[A. Gafni]{Ayla Gafni}
\author[R.J. Lemke Oliver]{Robert J. Lemke Oliver}
\author[D. Lowry-Duda]{David Lowry-Duda}
\author[G. Shakan]{George Shakan}
\author[R. Zhang]{Ruixiang Zhang}
\date{\today}
\begin{document}

\begin{abstract}%
  We study two polynomial counting questions in arithmetic statistics via a
  combination of Fourier analytic and arithmetic methods. First, we obtain new
  quantitative forms of Hilbert's Irreducibility Theorem for degree $n$
  polynomials $f$ with
  $\mathrm{Gal}(f) \subseteq A_n$. We study this both for
  monic polynomials and non-monic polynomials. Second, we study lower bounds on
  the number of degree $n$ monic polynomials with almost prime discriminants,
  as well as the closely related problem of lower bounds on the number of
  degree $n$ number fields with almost prime discriminants.
\end{abstract}

\maketitle

\section{Introduction}

The study of statistics for objects of algebraic interest has been a source of
rich and deep advances in mathematics. One of the goals of a recent AIM
workshop was to make progress on counting problems by further incorporating
Fourier analytic techniques into arithmetic statistics. This project grew out of
that workshop and is an effort in that direction.

In this paper, we use analytic and arithmetic methods in tandem to study a
variety of arithmetic statistics related to polynomial counts. We hope to see
further and more refined applications of similar ideas in other counting
problems in the future.
We focus on two primary applications, both involving counting polynomials of
certain types. First, we study a quantitative version of \emph{Hilbert's
Irreducibility Theorem} (HIT). A precise statement follows below, but our
version gives upper bounds for the number of degree $n$ polynomials whose
Galois groups are subgroups of $A_n$. Our techniques apply equally well to
monic and non-monic polynomials, so we examine both.

To state our version of HIT, we need a few definitions. We let
$V_n(\mathbb{Z})$ denote the set of degree $n$ polynomials over $\mathbb{Z}$,
and let $V_n^{\mathrm{mon}}(\mathbb{Z}) \subset V_n(\mathbb{Z})$ denote the set
of monic degree $n$ polynomials. We also define
\begin{equation}
    V_n(H)
    =
    \{
      f(x) \in \mathbb{Z}[x]
      :
      f(x) = a_n x^n+a_{n-1}x^{n-1}+ \cdots + a_0,
      a_n \neq 0,
      \Ht(f) \leq H
    \},
\end{equation}
where we define the \emph{height} of a polynomial $f$, $\Ht(f)$, as the
maximum of the absolute value of the coefficients. Let $V_n^{\mathrm{mon}}(H) \subset V_n(H)$ denote
the subset of monic polynomials. Finally, let
\begin{equation}
    E_n^{\mathrm{mon}}(H) = \lvert \{ f(x) \in V_n^{\mathrm{mon}}(H), \Gal(f) \neq S_n\} \rvert.
\end{equation}

Van der Waerden~\cite{vdw1936} gave the first explicit bound for $E_n^\mathrm{mon}(H)$ and
conjectured that $E_n^{\mathrm{mon}}(H) \ll_n \lvert V_n^{\mathrm{mon}}(H) \rvert / H$.
Gallagher~\cite{Gallagher1973} used
the large sieve to improve van der Waerden's bound to
$
  E_n^{\mathrm{mon}}(H) \ll_n H^{n-1/2}(\log H)^{1-\gamma_n},
$
where $\gamma_n$ is a sequence of positive numbers satisfying $\gamma_n \sim (2\pi n)^{-1/2}$.
Zywina~\cite{zywina2010} further improved this by removing the power of $\log H$, and the record is work of Dietmann~\cite{Dietmann2013} who shows that
\begin{equation}\label{eqn:dietmann-an}
  E_n^{\mathrm{mon}}(H) \ll_n H^{n-2+\sqrt{2}+\epsilon},
\end{equation}
and of Chow and Dietmann \cite{ChowDietmann2020}, who solve van der Waerden's conjecture when $n \leq 4$.

Significantly stronger bounds are known for the number $E_n^{\mathrm{mon}}(H)^\prime$ of monic polynomials
with Galois group isomorphic to neither $S_n$ nor $A_n$.
There have been a number of recent results on this problem, including work of Zywina~\cite{zywina2010} and Dietmann~\cite{Dietmann2012}.  The record on this problem is the \emph{very} recent work of Chow and Dietmann~\cite{ChowDietmann2021}, who show using the determinant method that
\begin{equation}\label{eqn:chow-dietmann}
    E_n^\mathrm{mon}(H)^\prime \ll_n H^{n-1.017}, \quad n \not\in \{7,8,10\},
\end{equation}
which resolves van der Waerden's conjecture in these degrees, apart from bounding the number of polynomials whose Galois group is $A_n$.  Their methods also essentially apply when $n \in \{7,8,10\}$, but save a power of $H$ smaller than $1$.

Here, we improve the bounds on polynomials $f$ where $\Gal(f) \subseteq A_n$.
For $H \geq 1$, define
    \[
      E_n(H; A_n) := \lvert \{ f \in V_n(H) : \Gal(f) \subseteq A_n \} \rvert.
    \]
We note that $E_n(H; A_n)$ counts a distinguished subset of polynomials counted
by $E_n(H)$. Define $E_n^{\mathrm{mon}}(H; A_n)$ by restricting to monic
polynomials. We use a combination of arithmetic and analytic techniques to
prove the following.\@

\begin{theorem}\label{thm:hit-intro}
    Let $n \geq 3$ be an integer and let $H \geq 2$. Then for any $\epsilon > 0$,
    \begin{equation*}
      E_n(H; A_n) \ll_{n,\epsilon} H^{n+\frac{1}{3}+\frac{8}{9n+21} + \epsilon}
    \end{equation*}
    and
    \begin{equation}\label{eq:thm_monic_compare}
      E_n^{\mathrm{mon}}(H; A_n) \ll_{n,\epsilon} H^{n-\frac{2}{3}+\frac{2}{3n+3} + \epsilon}.
    \end{equation}
\end{theorem}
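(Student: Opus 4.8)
The plan is to convert the Galois condition into a square condition on the discriminant, reduce each count to a family of point-counting problems on curves (for the monic bound) or surfaces (for the non-monic bound), and then run the determinant method in a very skew box, with the Fourier-analytic tools entering through the exponential sums that arise. \textbf{Step 1: reduction to discriminant squares.} For separable $f(x)=a_nx^n+\dots+a_0\in\mathbb Z[x]$ one has $\Gal(f)\subseteq A_n$ if and only if $\Disc(f)\in(\mathbb Q^\times)^2$; since $\Disc(f)$ is always an integer this is the same as $\Disc(f)$ being a nonnegative perfect square. The inseparable $f$ have $\Disc(f)=0$ and number $O_n(H^{n-1})$ (monic), resp.\ $O_n(H^{n})$ (general), a power of $H$ below either target, so it suffices to bound the number of integral points $(f,y)$ with $\Disc(f)=y^2$ and $f$ ranging over $V_n^{\mathrm{mon}}(H)$, resp.\ $V_n(H)$.

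\textbf{Step 2: slicing off the constant term.} The coefficient $a_0$ occurs in $f$ but not in $f'$, and writing $\Disc(f)=\pm a_n^{-1}\operatorname{Res}(f,f')=\pm a_n^{-1}(na_n)^n\prod_{f'(\beta)=0}f(\beta)$, each factor $f(\beta)$ being affine-linear in $a_0$, one sees that $\Disc(f)$, viewed as a polynomial $D_{\mathbf a}(a_0)$ in $a_0$ with the remaining coefficients $\mathbf a$ fixed, has degree exactly $n-1$ with leading coefficient $\pm n^n a_n^{\,n-1}$ — in particular nonzero, and $\pm n^n$ in the monic case. Fixing $\mathbf a$ costs $(2H+1)^{n-1}$ choices in the monic case, and the remaining task is to bound the number of integral points on the affine hyperelliptic curve $C_{\mathbf a}\colon y^2=D_{\mathbf a}(a_0)$, $\deg_{a_0}D_{\mathbf a}=n-1$, with $|a_0|\le H$; here $|y|\ll_n H^{n-1}$ automatically, so the box $[-H,H]\times[-Y,Y]$ with $Y\le H^{n-1}$ is extremely skew, and exploiting that skewness is the whole point.

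\textbf{Step 3: the curve count (the main obstacle).} When $D_{\mathbf a}$ is separable of degree $n-1$ — the generic case — $C_{\mathbf a}$ is geometrically irreducible, and I would bound $\#\bigl(C_{\mathbf a}(\mathbb Z)\cap([-H,H]\times[-Y,Y])\bigr)$ by the determinant method of Bombieri–Pila and Heath–Brown: split $|a_0|\le H$ into residue classes to a well-chosen modulus, Taylor-expand $C_{\mathbf a}$ on each class, and use the complete and incomplete character/exponential sums that appear (estimated via Weil–Deligne bounds and Poisson summation — the Fourier-analytic ingredient) to force the points in each class onto an auxiliary plane curve of bounded degree, which one then intersects with $C_{\mathbf a}$ via Bézout. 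Balancing the auxiliary degree against the modulus, and crucially using that $H$ is much smaller than $Y$ (for $n\ge4$ this is exactly what improves on the bound $H^{n-1/2}$ a naive application of Bombieri–Pila in the square box $[-Y,Y]^2$ would give), one expects
\[
  \#\bigl(C_{\mathbf a}(\mathbb Z)\cap([-H,H]\times[-Y,Y])\bigr)\ \ll_{n,\varepsilon}\ H^{\frac13+\frac{2}{3n+3}+\varepsilon},
\]
uniformly in $\mathbf a$, the dependence on the (possibly huge, $\ll_n H^{O_n(1)}$) coefficients of $D_{\mathbf a}$ costing only the $H^\varepsilon$. Extracting precisely this exponent from the determinant method in so skew a box, uniformly in $\mathbf a$, is the crux of the argument.

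\textbf{Step 4: degenerate slices, assembly, and the non-monic case.} The $\mathbf a$ for which $D_{\mathbf a}$ fails to be separable of degree $n-1$ — where $f'$ has a repeated root, where $f$ takes equal values at two critical points, or where $D_{\mathbf a}$ acquires a square factor (worst case: $D_{\mathbf a}$ is, up to a constant, a perfect square and $C_{\mathbf a}$ splits into two graphs each carrying $\sim H$ points) — form a proper subvariety, hence contain $O_n(H^{n-2})$ lattice points in the box; even the trivial bound $O(H)$ per such slice contributes only $O_n(H^{n-1})$, again below target. Summing the bound of Step 3 over the $\le(2H+1)^{n-1}$ good slices and adding the degenerate contribution gives $E_n^{\mathrm{mon}}(H;A_n)\ll_{n,\varepsilon}H^{n-1}\cdot H^{\frac13+\frac{2}{3n+3}+\varepsilon}=H^{n-\frac23+\frac{2}{3n+3}+\varepsilon}$. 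The non-monic bound follows by the same scheme, except that now there are \emph{two} distinguished coefficients — $a_0$ and, by the symmetry $f(x)\mapsto x^nf(1/x)$, the leading coefficient $a_n$, both entering $\Disc(f)$ to degree $n-1$ — so after fixing the remaining $n-1$ coefficients one counts integral points on the \emph{surface} $y^2=\Disc(a_0,a_n;\mathbf a)$ with $|a_0|,|a_n|\le H$; running the determinant method on this surface (and treating separately the degenerate or highly divisible $a_n$, where $f$ drops degree modulo many primes and obstructs the subdivision) should yield $\ll_{n,\varepsilon}H^{\frac43+\frac{8}{9n+21}+\varepsilon}$ per slice, hence $E_n(H;A_n)\ll_{n,\varepsilon}H^{n-1}\cdot H^{\frac43+\frac{8}{9n+21}+\varepsilon}=H^{n+\frac13+\frac{8}{9n+21}+\varepsilon}$, the slightly weaker exponent reflecting that a surface count is weaker than a curve count and that the extra coefficient $a_n$ demands more delicate bookkeeping in the degenerate cases.
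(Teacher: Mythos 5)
The decisive step of your argument, Step 3, is asserted rather than proved, and the exponent you assign to it is reverse-engineered from the statement of the theorem rather than derived from the determinant method. What the determinant method is actually known to give for an irreducible curve $y^2=D_{\mathbf a}(a_0)$ of degree $n-1$ in the skew box $|a_0|\le H$, $|y|\le Y\asymp H^{n-1}$ (Heath--Brown's skew-box refinement of Bombieri--Pila, with $T\ge Y^2\asymp H^{2n-2}$ coming from the monomial $y^2$) is $\ll_{n,\epsilon}H^{\frac{\log H\log Y}{\log T}+\epsilon}=H^{1/2+\epsilon}$ per slice, uniformly in $\mathbf a$; summed over the $\asymp H^{n-1}$ slices this recovers only $E_n^{\mathrm{mon}}(H;A_n)\ll H^{n-1/2+\epsilon}$, i.e.\ nothing beyond Gallagher/Zywina. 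Your claimed per-slice bound $H^{1/3+\frac{2}{3n+3}+\epsilon}$ is strictly stronger than this for every $n\ge 4$, and no mechanism is offered for obtaining it: the "skewness" is already fully exploited in the $H^{1/2}$ bound, and the only remaining lever (that the coefficients of $D_{\mathbf a}$ are typically of size a power of $H$, so the curve has large height) is not automatic, is not uniform in $\mathbf a$ (slices with small $\mathbf a$ exist), and is exactly the kind of refinement whose execution would constitute the entire proof. Indeed, the route "square discriminant plus point counting on $\Disc(f)=y^2$" is essentially Dietmann's, and its current record is $H^{n-2+\sqrt 2+\epsilon}$, which the theorem is precisely meant to beat; the non-monic surface count in Step 4 ("should yield") is even more speculative, since uniform determinant-method bounds for integral points on affine surfaces in skew boxes are weaker still.

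For contrast, the paper does not count square discriminants at all. It converts $\Gal(f)\subseteq A_n$ into the local statement that $f\bmod p$ is never an odd (squarefree) factorization type, encodes this through $\mu_{p,n}$, and runs a modified Selberg sieve in which the local weights are $\tfrac12\bigl(1+(-1)^{n+1}\mu_{p,n}(f)\bigr)$ rather than indicator functions; the resulting congruence sums are evaluated by Poisson summation, with Porritt's bound $\widehat\psi_p(\mathbf u)\ll p^{(1-n)/4}$ for the Fourier transform of the M\"obius function over $\mathbb F_p[x]$ supplying the $n$-dependent saving, and optimizing the sieve level $D$ yields the stated exponents (after removing the harmless weight $2^{-\omega(\LDisc(f))}$ at the cost of $H^\epsilon$). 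If you want to pursue your strategy, you must either prove the uniform per-slice bound you assume --- which is an open problem, not a routine application of Bombieri--Pila/Heath--Brown --- or accept the exponents that the existing point-counting technology actually delivers.
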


For $n \geq 8$, the bound \eqref{eq:thm_monic_compare} improves on Dietmann's
bound \eqref{eqn:dietmann-an} for the number of monic polynomials whose Galois
group is contained in $A_n$.  Combined with the work of Chow and Dietmann, this
improves the overall estimate on the error term in HIT to $E_n^\mathrm{mon}(H)
\ll_{n,\epsilon} H^{n - \frac{2}{3} + \frac{2}{3n+3} + \epsilon}$.  We also
note that Bhargava has announced a proof of van der Waerden's conjecture using
different methods.

Our approach to Theorem \ref{thm:hit-intro} is inspired by Gallagher's sieve theoretic approach (which also underlies Zywina's work), but instead of using the large sieve, we introduce a modification to the classical Selberg sieve in \S\ref{sec:modified_sieve}.  This modification allows us to connect the local conditions appearing in the sieve more properly to the M\"obius function over finite fields, provided we count the relevant polynomials with certain arithmetic weights, and is the key novelty in Theorem \ref{thm:hit-intro}.  We estimate the local density of the modified conditions by means of Poisson summation, with work of Porritt~\cite{porritt2018note} on bounds for the the Fourier transform of the M\"obius function appearing to control the error.

Our second application concerns lower bounds on the number of degree $n$
polynomials with \emph{almost prime} discriminants, i.e.\ discriminants with
relatively few distinct prime factors.  We draw inspiration from the following result of Taniguchi and
Thorne~\cite{TT20Levels}, who were in turn inspired by the folklore conjecture that there should be infinitely many fields of prime discriminant in every degree; this is known only for quadratic extensions, however.

\begin{theorem*}[\cite{TT20Levels}]
    There is an absolute constant $C_3>0$ such that for each $X >2$, there exist
    at least $C_3X/\log X$ cubic fields whose discriminant is squarefree, bounded above by $X$, and has
    at most $3$ prime factors, and there is an absolute constant $C_4>0$ such that for each $X >2$, there
    exist at least $C_4X/\log X$ quartic fields whose discriminant is squarefree, bounded above by $X$,
    and has at most $8$ prime factors.
\end{theorem*}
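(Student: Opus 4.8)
The plan is to deduce both assertions from three ingredients: the Davenport--Heilbronn and Bhargava parametrizations of cubic and quartic fields by integral orbits on prehomogeneous vector spaces; an arithmetic level-of-distribution estimate for the associated counting functions; and a one-dimensional combinatorial sieve. I describe the cubic case in detail; the quartic case runs in parallel, but with quantitatively weaker inputs, and this loss is exactly what pushes the admissible number of prime factors from $3$ up to $8$.

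First I set up the sifting sequence. By the Delone--Faddeev correspondence, isomorphism classes of cubic fields $K$ are in bijection with the $\mathrm{GL}_2(\mathbb{Z})$-orbits of irreducible integral binary cubic forms whose associated cubic ring is maximal, and the form discriminant equals $\mathrm{disc}(K)$. Set $a_d := \#\{K \text{ cubic} : \mathrm{disc}(K) = d\}$ and $\mathcal{A} := (a_d)_{|d| \le X}$. Davenport--Heilbronn give $A(X) := \sum_{|d| \le X} a_d \sim cX$ for an explicit $c > 0$, and, sorting orbits by their reduction modulo a squarefree $q$, one obtains
\[
  A_q(X) := \sum_{\substack{|d| \le X,\ q \mid d}} a_d = g(q)\,A(X) + r_q(X),
\]
where $g$ is the multiplicative function recording the local probability that a cubic field ramifies at each prime dividing $q$; a $p$-adic mass computation gives $g(p) = \tfrac1p + O(p^{-2})$, so $\mathcal{A}$ is a linear (dimension-one) sifting sequence. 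The finitely many primes dividing $6$, where local densities and maximality are more delicate, will be excluded from the sift and absorbed into constants.

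The hard part is the analytic input: I need a fixed level of distribution $\theta > 0$ such that, for every $A > 0$,
\[
  \sum_{q \le X^{\theta}} \mu^2(q)\,\lvert r_q(X)\rvert \ll_A X(\log X)^{-A}.
\]
The route to this is to express $A_q(X)$ through the Shintani zeta function of the space of binary cubic forms, restricted to the sublattice cut out by the condition $q \mid \mathrm{disc}$; a contour shift then writes the count as a main term plus contributions indexed by the dual lattice, which must be bounded uniformly in $q$ via the geometry of numbers and the functional equation. This is precisely the estimate furnished by \cite{TT20Levels}, and maximizing $\theta$ is the crux of the whole argument. For quartic fields the analogue rests instead on Bhargava's parametrization by pairs of ternary quadratic forms and his count of quartic fields; the relevant representation is larger and the error terms are correspondingly weaker, so the attainable $\theta$ is smaller, which is the source of the weaker conclusion there.

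Finally comes the sieve. A preliminary step restricts $\mathcal{A}$ to squarefree discriminants: for $d$ divisible by $p^2$ with $p \le X^{\theta/2}$ the above estimate at modulus $p^2$ applies, while the tail $p > X^{\theta/2}$ is dispatched by a crude bound on cubic fields with a large square factor in the discriminant. On the resulting squarefree subsequence I apply a one-dimensional combinatorial sieve; for the bare qualitative statement the Rosser--Iwaniec $\beta$-sieve would already give $P_r$ for some fixed $r$, but to bring the number of prime factors down to $3$ one should use Richert's weighted sieve, which admits a bounded number of small prime factors at a controlled cost. Sifting by the primes below $z = X^{1/s}$, every surviving discriminant is squarefree, at most $X$, and has all (or all but boundedly many, after weighting) prime factors above $z$, hence only $O(1)$ of them, while the sieve lower bound is $\gg X\prod_{p<z}(1-g(p)) \asymp X/\log X$. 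Optimizing $s$ against the level $\theta$ produced by the Shintani analysis makes $r = 3$ admissible for cubic fields, yielding $\gg X/\log X$ such fields with squarefree discriminant at most $X$ and at most $3$ prime factors; running the identical scheme with the quartic parametrization and its weaker level of distribution gives the same kind of lower bound with $r = 8$, which is the assertion to be proved.
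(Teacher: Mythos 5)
This statement is not proved in the paper at all: it is quoted verbatim from Taniguchi--Thorne \cite{TT20Levels} as motivation, and the paper's only engagement with it is a one-paragraph description of their method (prehomogeneous vector spaces, parametrization theorems, Poisson summation, then a sieve). Your outline faithfully reproduces the architecture of that original argument --- Delone--Faddeev/Davenport--Heilbronn in the cubic case, Bhargava's pairs of ternary quadratic forms in the quartic case, a level-of-distribution estimate for the discriminant-divisibility counts, and a weighted one-dimensional sieve --- so in that sense you are on the right track and consistent with what the authors say about the source.

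However, as a proof your proposal has a genuine gap, and in fact a circularity: the decisive ingredient, the uniform estimate $\sum_{q \le X^{\theta}} \mu^2(q)|r_q(X)| \ll_A X(\log X)^{-A}$ with a concrete admissible $\theta$, is exactly the content of \cite{TT20Levels}, and you simply cite it ("this is precisely the estimate furnished by \cite{TT20Levels}") rather than prove it. Everything quantitative in the statement hinges on that exponent together with the numerical sieve computation: you never exhibit a value of $\theta$, never carry out the Richert/weighted-sieve optimization, and hence never verify that $r=3$ is admissible for cubic fields and $r=8$ for quartic fields --- these numbers appear in your write-up only as targets, not as outputs of an argument. There are also unaddressed technical points that are part of the real work: the counts $A_q(X)$ must be for \emph{maximal} cubic (resp.\ quartic) rings, so the congruence conditions defining the sequence already involve a sieve over all primes whose uniformity in $q$ is nontrivial; and the squarefree-discriminant restriction changes the local densities $g(p)$ (totally ramified primes contribute $p^2\mid\mathrm{disc}$), so the preliminary square-sieve step needs genuine estimates for fields with $p^2\mid\mathrm{disc}$ for large $p$, which you only gesture at. As it stands, your text is a correct plan for reproving the theorem of \cite{TT20Levels}, but not a proof of it.
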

The cubic case improved an earlier result of Belabas and Fouvry~\cite{BelabasFouvry1999} which had
$3$ prime factors replaced by $7$.

In \S\ref{sec:almost_prime}, we first study the number of polynomials whose discriminants are almost
prime. We prove an almost prime discriminant result for all $n \geq 3$ that obtains discriminants with fewer prime factors than \cite{TT20Levels} if $n=4$.

\begin{theorem}
    Let $n \geq 3$, and let $H \geq 2$.  For any $r \geq 2n-3$, we have
        \[
            \#\{ f \in V_n^\mathrm{mon}(H) : \omega(\Disc(f)) \leq r\}
                \gg_{n,r} \frac{H^n}{\log H},
        \]
    where $\omega(\Disc(f))$ denotes the number of distinct primes dividing the
    discriminant of the polynomial $f$.
\end{theorem}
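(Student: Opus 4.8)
The plan is to produce these polynomials via a weighted lower-bound sieve of dimension~$1$. Take $\mathcal{A} = V_n^{\mathrm{mon}}(H)$, and for a prime $p$ let the local condition be $p \mid \Disc(f)$. The first step is to identify the local densities: over $\mathbb{F}_p$ a monic polynomial of degree $n$ has vanishing discriminant exactly when it is not squarefree, and the number of non-squarefree monic polynomials of degree $n$ over $\mathbb{F}_p$ is exactly $p^{n-1}$ for $n \geq 2$; hence the proportion of residues $f_0 \bmod p$ with $p \mid \Disc(f_0)$ equals $1/p$, so the sieve has dimension $\kappa = 1$ with no exceptional primes, and the expected number of $f \in \mathcal{A}$ with $\Disc(f)$ free of primes below $z$ is $\asymp H^n \prod_{p<z}(1 - 1/p) \asymp H^n / \log z$. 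Since $\lvert \Disc(f) \rvert \ll_n H^{2n-2}$ on $\mathcal{A}$, a value of $\Disc(f)$ all of whose prime factors exceed $z = H^{\delta}$ has at most $(2n-2)/\delta$ of them, so a crude sieve is far from the target $2n - 3$; instead I would run a weighted linear sieve (Richert, Diamond--Halberstam--Richert), which pays with a smaller sieving level $z$ in exchange for permission to retain a bounded number of medium-sized primes, and the bound $r = 2n-3$ is precisely what this machine yields from the level of distribution established next.

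The crux is that level of distribution: one needs
\[
  N(d) := \#\{ f \in \mathcal{A} : d \mid \Disc(f) \} = \frac{\rho(d)}{d^n}\lvert \mathcal{A} \rvert + E(d), \qquad \rho(d) = d^{n-1} \ \ (d \text{ squarefree}),
\]
with $\sum_{d \leq D} \mu^2(d) 3^{\omega(d)} \lvert E(d) \rvert = o(H^n/\log H)$ for a sieving level $D = H^{\beta}$ taken as large as the exponential-sum bound below permits. Writing each coefficient against a smooth weight $\psi(a_i/H)$ in place of the sharp cutoff $\lvert a_i\rvert \leq H$ (the difference being absorbed at the end) and applying Poisson summation coordinatewise gives
\[
  N(d) = \frac{H^n}{d^n} \sum_{\mathbf{k} \in \mathbb{Z}^n} \widehat{\psi}\!\left( \tfrac{H\mathbf{k}}{d} \right) \mathcal{C}(\mathbf{k}; d), \qquad \mathcal{C}(\mathbf{k}; d) := \sum_{\substack{f_0 \bmod d \\ d \mid \Disc(f_0)}} e\!\left( \tfrac{\mathbf{k} \cdot f_0}{d} \right),
\]
where $\mathbf{k} = \mathbf{0}$ reproduces the main term and rapid decay of $\widehat{\psi}$ confines the rest to $\lvert \mathbf{k} \rvert \ll d/H$. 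For those one needs cancellation in the complete exponential sum $\mathcal{C}(\mathbf{k};d)$ over the discriminant hypersurface $\{\Disc_n = 0\} \subset \mathbb{A}^n$; by multiplicativity this reduces to bounding $\mathcal{C}(\mathbf{k}; p)$ for $p$ prime and $\mathbf{k} \not\equiv \mathbf{0} \pmod p$. Since $\{\Disc_n = 0\}$ is a geometrically irreducible hypersurface of degree $2n-2$ contained in no hyperplane, one expects power-saving cancellation of essentially square-root strength off its $p^{n-1}$-sized main term, with the lower-dimensional singular locus contributing harmlessly; such a bound should follow either from Deligne-type estimates for exponential sums attached to singular hypersurfaces or from the Fourier-analytic input used elsewhere in the paper. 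Feeding the resulting estimate for $E(d)$ into the sum over $d \leq D$ establishes the level of distribution, and the weighted sieve then delivers $\gg_{n,r} H^n / \log H$ polynomials $f$ with $\omega(\Disc(f)) \leq r$ for every $r \geq 2n - 3$.

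I expect the main obstacle to be exactly that exponential-sum input: $\{\Disc_n = 0\}$ is genuinely singular---its singular locus is the closure of the locus of polynomials with a triple root together with that of polynomials having two distinct double roots---so one cannot simply invoke the Weil/Deligne bound for smooth hypersurfaces and must instead control the contribution of the stratification by root-multiplicity type, presumably by exploiting the expression of $\Disc_n$ as a resultant. Secondary, routine points include the transfer between the smooth weight and the sharp box $\Ht(f) \leq H$, and checking that the weighted-sieve remainders $3^{\omega(d)} \lvert E(d) \rvert$ genuinely beat $H^n/\log H$ over the whole range $d \leq D$. Finally, it is worth noting that squarefreeness of the discriminant plays no role: one discards the $O(H^{n-1})$ polynomials with $\Disc(f) = 0$ at the outset, and since $\omega$ counts only distinct primes, sporadic divisibility of $\Disc(f)$ by small squares is irrelevant---in particular $p = 2$, although $\Disc(f) \bmod 4 \in \{0,1\}$ always, still has the standard first-power density $1/2$ and causes no difficulty.
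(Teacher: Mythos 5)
Your overall strategy is the same as the paper's: a dimension-one weighted almost-prime sieve (the paper uses Friedlander--Iwaniec, Theorem 25.1 of \cite{friedlander2010opera}; you cite Richert/DHR, which is the same machine), local density $1/p$ coming from the count $p^{n-1}$ of non-squarefree monic polynomials mod $p$, and a level of distribution obtained by smoothing and Poisson summation, reducing to complete exponential sums over the locus $\Disc \equiv 0 \pmod p$. The sieve bookkeeping you leave implicit is routine and is done in the paper: with error $O(d^{n-2})$ per modulus one gets level $D \asymp H^{n/(n-1)} \asymp x^{n/(2(n-1)^2)}$ (where $x \asymp H^{2n-2}$), and the weighted-sieve condition $1/\Delta_r < n/(2(n-1)^2)$ yields $r > 2n - 3.73 + 2/n$, hence $r \geq 2n-3$ for $n \geq 3$.

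The genuine gap is the exponential sum input, which you correctly identify as the crux but do not supply, and the route you propose for it does not work as stated. You expect ``essentially square-root strength'' cancellation in $\mathcal{C}(\mathbf{k};p)$, i.e.\ roughly $p^{(n-1)/2}$, from Deligne-type bounds adapted to the singular discriminant hypersurface. That expectation is false: as noted in Remark \ref{largecoefficients}, there are nonzero phases $\mathbf{v}$ at which $\lvert \widehat\psi_p^\mathrm{mon}(\mathbf{v})\rvert \gg p^{-2}$, i.e.\ the unnormalized sum is $\gg p^{n-2}$, far larger than $p^{(n-1)/2}$ once $n \geq 4$; and black-box bounds for singular hypersurfaces degrade with the dimension of the singular locus (here of codimension two in $\mathbb{A}^n$), so they do not give what you want either. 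Fortunately the sieve only needs the much weaker saving of a single power of $p$, namely $\mathcal{C}(\mathbf{k};p) \ll p^{n-2}$ for $\mathbf{k} \not\equiv \mathbf{0} \pmod p$, and this is what the paper proves (Lemma \ref{lem:squarefree-fourier}) by an elementary argument you would need to add: write the non-squarefree indicator via the function-field M\"obius identity $\mathbf{1}_{\mathrm{sf}}(f) = \sum_{g^2 \mid f} \mu(g)$, observe that for fixed $g$ of degree $d$ the inner complete sum over $f$ of degree $n-2d$ vanishes unless the adjoint $T_{g^2}$ of multiplication by $g^2$ annihilates $\mathbf{v}$, bound the terms with $d \geq 2$ trivially by $\sum_{d\ge 2} p^{-d} \ll p^{-2}$, and for $d=1$ note that $T_{(x+\alpha)^2}\mathbf{v} = \mathbf{0}$ is a nonzero system of (at most quadratic) equations in $\alpha$ with at most two solutions. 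With that lemma in place, your argument closes and coincides with the paper's proof; without it, the level of distribution --- and hence the theorem --- is not established. (Your concluding remarks are fine: squarefreeness of the discriminant indeed plays no role, and discarding the $O(H^{n-1})$ polynomials with $\Disc(f)=0$ is harmless.)
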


As the discriminant of a number field cut out by an irreducible polynomial divides that of the
polynomial, we can use lower bounds for counts of almost prime polynomial discriminants to get lower
bounds for almost prime number field discriminants. To make this comparison effective, we use
results from~\cite{LOTPolys} that bound the number of different polynomials of a given height that
cut out the same number field. This allows us to prove the following theorem.  We state a more precise version as Theorem \ref{thm:almost-prime-fields}.

\begin{theorem}\label{thm:almost-prime-intro}
   Let $n \geq 3$, and let $X \geq 2$.  There is a constant $\delta_n >0$ depending only on $n$, such that for any $r \geq 2n-3$, we have
        \[
            \#\{ F/\mathbb{Q} : [F:\mathbb{Q}]=n, \Disc(F) \leq X, \omega(\Disc(F)) \leq r\}
                \gg_{n} X^{\frac{1}{2} + \delta_n},
        \]
    where $\omega(\Disc(F))$ denotes the number of distinct primes dividing the
    discriminant of the field $F$.
\end{theorem}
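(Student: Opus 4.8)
The plan is to leverage the previous theorem on almost prime polynomial discriminants and pass from polynomials to the number fields they generate. First I would start with the set $S(H) = \{ f \in V_n^{\mathrm{mon}}(H) : \omega(\Disc(f)) \leq r, f \text{ irreducible}\}$; by the preceding theorem (together with the standard fact that all but $O(H^{n-1})$ monic polynomials of height at most $H$ are irreducible—e.g.\ by Hilbert irreducibility, or even just the trivial bound on reducible polynomials), we have $\#S(H) \gg_{n,r} H^n/\log H$ for $H$ large. For each such $f$, let $F_f = \mathbb{Q}[x]/(f)$ be the associated degree $n$ number field. The key arithmetic input is that $\Disc(F_f) \mid \Disc(f)$, so $\omega(\Disc(F_f)) \leq \omega(\Disc(f)) \leq r$; moreover, if $f$ has height at most $H$ then $|\Disc(f)| \ll_n H^{2n-2}$, hence $\Disc(F_f) \ll_n H^{2n-2}$ as well. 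So every $f \in S(H)$ produces a degree $n$ field counted on the left side of the theorem with $X \asymp_n H^{2n-2}$.

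Next I would bound the multiplicity of the map $f \mapsto F_f$. This is exactly where the results of \cite{LOTPolys} enter: they bound the number of monic degree $n$ polynomials of height at most $H$ cutting out a fixed number field $F$ by $O_{n,\epsilon}(H^{n-\eta_n+\epsilon})$ for some $\eta_n > 0$ (the point being that generating polynomials of a fixed field of bounded height are constrained to lie near a low-dimensional subvariety). Dividing, the number of distinct fields produced is
\[
    \#\{F_f : f \in S(H)\} \gg_{n,r,\epsilon} \frac{H^n/\log H}{H^{n-\eta_n+\epsilon}} = H^{\eta_n - \epsilon}/\log H.
\]
Re-expressing in terms of $X \asymp_n H^{2n-2}$, i.e.\ $H \asymp_n X^{1/(2n-2)}$, and choosing $\epsilon$ small relative to $\eta_n$, this gives a count $\gg_n X^{\eta_n/(2n-2) - \epsilon'}$; however, this is weaker than the claimed exponent $\tfrac12 + \delta_n$, so a more careful argument is needed.

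The improvement to exponent $\tfrac12 + \delta_n$ requires not using the trivial relation $X \asymp H^{2n-2}$ but rather exploiting that a \emph{positive proportion} (in a suitable sense) of the fields $F_f$ arising from $f \in S(H)$ have discriminant much smaller than the worst-case bound $H^{2n-2}$—indeed, typical monic $f$ of height $H$ have $[\mathcal{O}_{F_f} : \mathbb{Z}[x]/(f)]$ large, so $\Disc(F_f) = \Disc(f)/[\mathcal{O}_{F_f}:\mathbb{Z}[x]/(f)]^2$ is substantially smaller; quantitatively, one expects $\Disc(F_f) \ll H^{n+1+\epsilon}$ or so for a positive proportion of $f$. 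I would therefore refine $S(H)$ to the subset $S'(H)$ of $f$ for which additionally $\Disc(F_f) \leq H^{n+1}$ (or the appropriate threshold), and argue using the geometry-of-numbers/volume heuristics underlying \cite{LOTPolys} that $\#S'(H) \gg_{n,r} H^n/\log H$ still holds—this is the technical heart and the main obstacle, since it requires showing the almost-prime construction of the previous theorem can be carried out within the thin-but-positive-density set of polynomials whose field discriminant is small. Granting this, combining with the multiplicity bound from \cite{LOTPolys} and now setting $X = H^{n+1}$ gives $\#\{\text{fields}\} \gg_n H^{\eta_n - \epsilon}/\log H \gg_n X^{(\eta_n-\epsilon)/(n+1)}$, and since the relevant $X$-range is $X \geq H^{n+1} \geq$ (something), one matches this against the trivial lower bound and optimizes: taking $\delta_n$ to be whatever positive quantity the bookkeeping yields after balancing the polynomial count $H^n/\log H$ against the multiplicity $H^{n - \eta_n}$ and the discriminant size $X \asymp H^{n+1}$, one arrives at exponent $\tfrac12 + \delta_n$ for a suitable $\delta_n = \delta_n(\eta_n) > 0$. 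The precise version stated as Theorem \ref{thm:almost-prime-fields} would make the dependence of $\delta_n$ on the exponent $\eta_n$ from \cite{LOTPolys} explicit.
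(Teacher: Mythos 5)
Your first step (passing from Theorem \ref{thm:almost-prime-polynomial} to fields via $\Disc(F)\mid\Disc(f)$, bounding $\Disc(F)\ll_n H^{2n-2}$, and dividing by a multiplicity bound from \cite{LOTPolys}) is the same as the paper's proof of the first, weaker assertion of Theorem \ref{thm:almost-prime-fields}, except that you misquote the multiplicity bound: the actual statement (Lemma \ref{lem:field-multiplicity}) is $M_F(H)\ll_n H(\log H)^{n-1}\Disc(F)^{-1/(n^2-n)}$, i.e.\ roughly $H^{1+o(1)}$ with an extra saving in the field discriminant, not $H^{n-\eta_n+\epsilon}$. With the correct bound your paragraph already yields $\gg_{n,r} X^{1/2}/(\log X)^n$ fields, but, as you note, not the exponent $\tfrac12+\delta_n$. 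The problem is your proposed route to the improvement, which has a genuine gap: you need that a positive proportion of the almost-prime-discriminant polynomials generate fields with $\Disc(F)\le H^{n+1}$, and you explicitly grant this (calling it the technical heart) without proof. Moreover the heuristic you offer for it is false. For monic polynomials the index $[\mathcal{O}_F:\mathbb{Z}[x]/(f)]$ is typically \emph{small}, not large: a positive proportion of monic $f$ of height $H$ have squarefree $\Disc(f)$, in which case the index is $1$ and $\Disc(F)=\Disc(f)\asymp H^{2n-2}$; and for the polynomials produced by the sieve every prime dividing $\Disc(f)$ exceeds a fixed power of $H$, so there is no mechanism forcing small field discriminants. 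Indeed the paper proves the opposite statement and exploits it.

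The missing idea is to combine the $\Disc(F)^{-1/(n^2-n)}$ factor in Lemma \ref{lem:field-multiplicity} with an upper bound $\ll_n X^{c_n}$ for the number of degree $n$ $S_n$-fields of discriminant up to $X$ (Schmidt, Lemke Oliver--Thorne). By partial summation, the number of height-$\le H$ polynomials cutting out fields with $\Disc(F)\le Y$ is $\ll_n H(\log H)^{n-1}Y^{c_n-\frac{1}{n^2-n}}$, which is $\ll H^{n-\epsilon}$ for a suitable $Y\asymp H^{\frac{n(n-1)^2}{c_n n(n-1)-1}-\epsilon}$; hence almost all of the $\gg H^n/\log H$ polynomials from Theorem \ref{thm:almost-prime-polynomial} cut out fields with $\Disc(F)\ge Y$, and for those fields $M_F(H)\ll_n H(\log H)^{n-1}Y^{-\frac{1}{n^2-n}}$. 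Dividing the polynomial count by this smaller multiplicity gives $\gg H^{\,n-1+\frac{n-1}{c_n n(n-1)-1}-\epsilon}$ distinct fields, all with $\Disc(F)\ll_n H^{2n-2}$, so with $H\asymp X^{1/(2n-2)}$ one gets the exponent $\tfrac12+\frac{1}{2c_n n(n-1)-2}-\epsilon$, i.e.\ an admissible $\delta_n>0$. Note that here the fields are counted with $X\asymp H^{2n-2}$ throughout; the gain comes from lowering the multiplicity for large-discriminant fields, not from lowering $X$ as in your plan. Without this ingredient (or an actual proof of your small-discriminant claim, which runs against known density results) your argument establishes only the exponent $\tfrac12$ up to logarithms.
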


In the quartic case $n=4$, Theorem \ref{thm:almost-prime-intro} improves on the quality of the
almost primes produced by Taniguchi and Thorne (achieving $r=5$ as opposed to $r=8$), but at the
expense of obtaining a worse lower bound on the number of such fields.  In fact, the lower bounds
obtained by Taniguchi and Thorne are of the expected order of magnitude for the number of prime
discriminant fields, which is $\asymp_n X / \log X$ for every $n$, while Theorem
\ref{thm:almost-prime-intro} falls short.  The reason for this is that to prove their theorem,
Taniguchi and Thorne use group actions on \emph{prehomogeneous vector spaces}. They are then able to
count certain lattice points related to the desired field counts, utilizing deep parametrization
theorems and Poisson summation.  Their method is powerful, but as it relies on parametrizations via
prehomogeneous vector spaces, it is only currently available for degrees less than or equal to $5$.
It is interesting to note that they are sometimes able to explicitly compute all Fourier
transforms~\cite{TT20Orbital}, but can prove their results using rougher estimates.

To get a result for all $n \geq 3$, we use a different approach that involves studying an
underlying Fourier transform directly and the \emph{almost prime sieve}. Our analysis centers on the
Fourier transform of the squarefree indicator function. For small degrees, it may be possible to
include additional arithmetic ingredients to improve our results.

Finally, to reach a wide audience, we have erred on the side of writing more details and
explanations. We hope for this to be an engaging, understandable paper.

\section*{Acknowledgements}

This work was supported by the National Science Foundation [DMS-1954407
to TCA; DMS-1856541 to RZ; and DMS-1926686 to RZ];
the Simons Collaboration in Arithmetic Geometry, Number Theory, and Computation
[546235 to DLD];
the Deutsche Forschungsgemeinschaft [(EXC-2047/1 - 390685813) to AG]
while AG was in residence at the Hausdorff Institute in Bonn, Germany;
Ben Green's Simons Investigator Grant [376201 to GS];
and the Ky Fan and Yu-Fen Fan Endowment Fund [to RZ] at the Institute for
Advanced Study.

This collaboration arose out of an AIM workshop on Fourier analysis, arithmetic statistics, and
discrete restriction organized by the first author, Frank Thorne, and Trevor Wooley.  The authors
thank AIM for the supportive working environment.  They would also like to thank Will Sawin for
helping strengthen this paper, and Manjul Bhargava, Kevin Hughes, Hong Wang, and Jiuya Wang for
useful conversations.

\section{Polynomials over finite fields}

We begin in this section by collecting some basic facts from algebraic number
theory on the reduction modulo primes of integer polynomials.  (See for
example~\cite[\S4.16]{jacobson2012basic} as a reference).

\begin{lemma}\label{lem:poly-disc}
    Let $f(x) \in \mathbb{Z}[x]$ be a polynomial, and let $\Disc(f) \in
    \mathbb{Z}$ denote its polynomial discriminant.  Then $\Disc(f) = 0$ if and
    only if $f(x)$ has a repeated factor (which happens over $\mathbb{C}$ if
    and only if it happens over $\mathbb{Z}$).  Moreover if $p$ is a prime
    number not dividing the leading coefficient of $f$, then $p \mid \Disc(f)$
    if and only if $f(x) \Mod p$ has repeated factors.
\end{lemma}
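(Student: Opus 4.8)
The plan is to derive everything from the product formula for the discriminant, $\Disc(f)=a_n^{2n-2}\prod_{i<j}(\alpha_i-\alpha_j)^2$, where $a_n$ is the leading coefficient of $f$ and $\alpha_1,\dots,\alpha_n$ are its roots in $\mathbb{C}$ counted with multiplicity, together with the standard fact that over a perfect field $K$ (in particular $\mathbb{Q}$, $\mathbb{C}$, and every $\mathbb{F}_p$) a nonzero $h\in K[x]$ has a repeated irreducible factor if and only if $\gcd(h,h')\neq 1$ in $K[x]$, if and only if $h$ has a repeated root in $\bar K$.

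For the first assertion, the product formula makes it immediate that $\Disc(f)=0$ exactly when $\alpha_i=\alpha_j$ for some $i\neq j$, i.e.\ exactly when $f$ has a repeated root in $\mathbb{C}$. To see that this is equivalent to $f$ having a repeated factor already over $\mathbb{Z}$, note that $f$ has a repeated root in $\mathbb{C}$ if and only if $\gcd(f,f')\neq 1$ in $\mathbb{C}[x]$; since this gcd may be computed by the Euclidean algorithm over $\mathbb{Q}$ and is unchanged by the field extension $\mathbb{Q}\subseteq\mathbb{C}$, it holds if and only if $\gcd(f,f')\neq 1$ in $\mathbb{Q}[x]$, i.e.\ if and only if $f$ has a repeated irreducible factor in $\mathbb{Q}[x]$, which by Gauss's lemma holds if and only if $f$ has a repeated factor in $\mathbb{Z}[x]$.

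For the second assertion, assume $p\nmid a_n$ and write $\bar f$ for the reduction of $f$ modulo $p$. First I would reduce to the monic case: the polynomial $g(x):=a_n^{n-1}f(x/a_n)$ lies in $\mathbb{Z}[x]$, is monic of degree $n$, has roots $a_n\alpha_1,\dots,a_n\alpha_n$, and satisfies $\Disc(g)=a_n^{(n-1)(n-2)}\Disc(f)$ by the product formula, so, as $p\nmid a_n$, we have $p\mid\Disc(f)$ if and only if $p\mid\Disc(g)$. Moreover $g(a_nx)=a_n^{n-1}f(x)$, so $\bar f$ and $\bar g$ differ by a nonzero scalar and the invertible substitution $x\mapsto\bar a_nx$, whence $\bar f$ has a repeated factor over $\mathbb{F}_p$ if and only if $\bar g$ does. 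Thus we may assume $f$ is monic, so that its roots $\alpha_i$ are algebraic integers. Fixing a splitting field $L/\mathbb{Q}$, a prime $\mathfrak{p}\mid p$ of $\mathcal{O}_L$, and the reduction $\mathcal{O}_L\to k:=\mathcal{O}_L/\mathfrak{p}$, reduction of $f=\prod_i(x-\alpha_i)$ shows that the images $\bar\alpha_i\in k$ are, with multiplicity, precisely the roots of $\bar f$, while reduction of the product formula shows that $\Disc(f)$ and $\prod_{i<j}(\bar\alpha_i-\bar\alpha_j)^2$ have the same image in $k$ (this is meaningful since $\Disc(f)\in\mathbb{Z}$ and $\mathbb{F}_p\hookrightarrow k$, so an integer is divisible by $p$ exactly when its image in $k$ vanishes). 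Hence $p\mid\Disc(f)$ if and only if $\bar\alpha_i=\bar\alpha_j$ for some $i\neq j$, i.e.\ $\bar f$ has a repeated root, which by perfectness of $\mathbb{F}_p$ (exactly as in the first part) is equivalent to $\bar f$ having a repeated factor over $\mathbb{F}_p$.

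The one genuinely delicate point, I expect, is the behavior of the discriminant under reduction modulo $p$ in the case $p\mid n$: there the naive identity $\overline{\mathrm{Res}(f,f')}=\mathrm{Res}(\bar f,\bar f')$ fails, because $\bar f'=\overline{f'}$ drops degree, so one cannot argue purely with resultants without tracking a correction factor. Routing the argument through the monic model $g$ and the product formula for its roots sidesteps this entirely, at the modest cost of invoking integral closure and reduction of algebraic integers modulo a prime above $p$. Everything else is routine bookkeeping with Gauss's lemma and with the $\gcd(h,h')\neq 1$ characterization of repeated factors over a perfect field.
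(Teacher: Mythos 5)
Your argument is correct. Note, though, that the paper does not actually prove this lemma: it is stated as a collection of standard facts with a citation to Jacobson's \emph{Basic Algebra} (\S4.16), where the usual treatment runs through the resultant, $\Disc(f)=\pm a_n^{-1}\mathrm{Res}(f,f')$, and its behaviour under reduction. Your route is a genuine self-contained alternative: you work entirely with the product formula $\Disc(f)=a_n^{2n-2}\prod_{i<j}(\alpha_i-\alpha_j)^2$, pass to the monic model $g(x)=a_n^{n-1}f(x/a_n)$ (the bookkeeping $\Disc(g)=a_n^{(n-1)(n-2)}\Disc(f)$ and $g(a_nx)=a_n^{n-1}f(x)$ is right, and $p\nmid a_n$ makes both the discriminant comparison and the substitution $x\mapsto\bar a_nx$ harmless), and then reduce the integral roots modulo a prime $\mathfrak{p}\mid p$ of a splitting field, finishing with the $\gcd(h,h')$ criterion over perfect fields. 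What this buys is exactly what you say: it avoids the degree-drop subtlety in $\overline{\mathrm{Res}(f,f')}=\mathrm{Res}(\bar f,\bar f')$ when $p\mid n$, which is the one place a naive resultant argument needs a correction factor; the cost is importing a little algebraic number theory (existence of $\mathfrak{p}$ and reduction $\mathcal{O}_L\to k$), whereas the resultant proof stays inside linear algebra over $\mathbb{Z}/p$. Two cosmetic points only: the statement nominally allows $\deg f\le 1$ and reducible or non-primitive $f$, so if you wrote this up you should dispose of the degenerate degrees by convention and phrase the $\mathbb{Z}$-versus-$\mathbb{Q}$ step as ``repeated non-constant (equivalently, irreducible primitive) factor'' so Gauss's lemma applies cleanly; and it is worth stating explicitly that $\gcd$ computed by the Euclidean algorithm is unchanged under field extension, since that single fact powers both the $\mathbb{C}/\mathbb{Q}$ and the $\bar{\mathbb{F}}_p/\mathbb{F}_p$ comparisons. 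Neither is a gap.
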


Notice that if $f(x) \in \mathbb{Z}[x]$ is irreducible, then it has no repeated
factors (since it has only one!).  It follows that $\Disc(f)$ must be
non-zero, and thus can be divisible by only finitely many primes.  In
particular, it will have repeated factors (mod $p$), or be of smaller degree,
only for those finitely many primes.  For all of the others, we have the
following connection between factorization types and Galois groups.

\begin{lemma}\label{lem:kronecker}
    Suppose $f(x) \in \mathbb{Z}[x]$ is irreducible with degree $n$.  Let $G
    \subseteq S_n$ be its Galois group, thought of as permuting the roots of
    $f(x)$.  Suppose $p$ is a prime not dividing the leading coefficient of
    $f(x)$ for which $f(x) \Mod p$ has no repeated factor.  Write
        \[
            f(x) = f_1(x) \dots f_r(x) \pmod{p},
        \]
    where each $f_i(x)$ is irreducible (mod $p$).

    Then there is an element of $G$ with cycle type $(\deg f_1)(\deg f_2)
    \cdots (\deg f_r)$.  In fact, this is true for any of the Frobenius
    elements associated to $p$.
\end{lemma}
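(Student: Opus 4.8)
The plan is to prove Lemma~\ref{lem:kronecker}, the classical connection (due essentially to Dedekind and Kronecker) between the factorization type of $f(x) \pmod p$ and cycle types of Frobenius elements in $\Gal(f)$.

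\begin{proof}[Proof proposal]
The strategy is to pass to the splitting field and track the decomposition of $p$.  Let $K = \mathbb{Q}(\alpha_1, \dots, \alpha_n)$ be the splitting field of $f$ over $\mathbb{Q}$, where $\alpha_1, \dots, \alpha_n$ are the (distinct, since $\Disc(f) \neq 0$ by Lemma~\ref{lem:poly-disc}) roots of $f$, and let $G = \Gal(K/\mathbb{Q})$, which embeds into $S_n$ via its action on $\{\alpha_1, \dots, \alpha_n\}$.  Fix a prime $\mathfrak{p}$ of $\mathcal{O}_K$ lying over $p$, and let $D = D(\mathfrak{p}|p) \subseteq G$ be its decomposition group.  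First I would record the key hypothesis: since $p$ does not divide the leading coefficient of $f$ and $f(x) \pmod p$ is separable, the roots $\alpha_i$ are $\mathfrak{p}$-integral (after clearing the leading coefficient, or by working in a suitable localization), their reductions $\bar\alpha_i \in \mathcal{O}_K/\mathfrak{p}$ remain distinct, and $p$ is unramified in $K$ (ramification would force a repeated root mod $p$).  Hence $D$ is cyclic, generated by a Frobenius element $\mathrm{Frob}_{\mathfrak{p}}$ characterized by $\mathrm{Frob}_{\mathfrak{p}}(x) \equiv x^{|\mathbb{F}_p|} \pmod{\mathfrak{p}}$ for all $x \in \mathcal{O}_K$, which makes sense because the residue field of $\mathbb{Q}$ at $p$ is $\mathbb{F}_p$.

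Next I would compute the cycle type of $\mathrm{Frob}_{\mathfrak{p}}$ acting on the $\alpha_i$ and match it to the degrees of the $f_i$.  Reduction modulo $\mathfrak{p}$ gives a bijection between the roots $\{\alpha_i\}$ of $f$ in $K$ and the roots $\{\bar\alpha_i\}$ of $\bar f := f \pmod p$ in $\overline{\mathbb{F}_p}$ (injectivity is the separability of $\bar f$; surjectivity is a count, since $\bar f$ has $n$ distinct roots in its splitting field and there are $n$ of the $\bar\alpha_i$).  Under this bijection, the action of $\mathrm{Frob}_{\mathfrak{p}}$ on $\{\alpha_i\}$ corresponds to the action of the $p$-power Frobenius $\phi\colon y \mapsto y^p$ on $\{\bar\alpha_i\}$: indeed $\overline{\mathrm{Frob}_{\mathfrak{p}}(\alpha_i)} \equiv \alpha_i^p \equiv \bar\alpha_i^{\,p} \pmod{\mathfrak{p}}$.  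Now for a monic irreducible factor $f_j$ of $\bar f$ over $\mathbb{F}_p$ of degree $d_j$, the roots of $f_j$ in $\overline{\mathbb{F}_p}$ form a single orbit of $\phi$ of size $d_j$ (the roots of an irreducible polynomial over a finite field are precisely a Frobenius orbit).  Transporting back, the $\alpha_i$ reducing into $f_j$ form a single cycle of $\mathrm{Frob}_{\mathfrak{p}}$ of length $d_j$.  Ranging over $j = 1, \dots, r$ partitions all $n$ roots, so the cycle type of $\mathrm{Frob}_{\mathfrak{p}}$ is exactly $(\deg f_1)(\deg f_2)\cdots(\deg f_r)$, giving the desired element of $G$.

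Finally I would address the ``any Frobenius element'' clause: a different choice of prime $\mathfrak{p}'$ over $p$ is of the form $\sigma\mathfrak{p}$ for some $\sigma \in G$ (since $G$ acts transitively on primes above $p$), and then $D(\mathfrak{p}'|p) = \sigma D(\mathfrak{p}|p)\sigma^{-1}$ with $\mathrm{Frob}_{\mathfrak{p}'} = \sigma\, \mathrm{Frob}_{\mathfrak{p}}\, \sigma^{-1}$; conjugate permutations have the same cycle type, so the conclusion is independent of the choice, and every Frobenius element at $p$ works.  The main obstacle---really the only subtle point---is the bookkeeping in the first paragraph when $f$ is non-monic: one must be careful that ``the roots are $\mathfrak{p}$-integral and reduce well'' genuinely uses the hypothesis $p \nmid a_n$, e.g.\ by replacing $f$ with the monic $a_n^{n-1} f(x/a_n)$, whose reduction mod $p$ has the same splitting behavior and factor degrees as $\bar f$ since $a_n$ is a unit mod $p$; everything else is the standard Dedekind argument.
\end{proof}
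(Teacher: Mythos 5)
Your proof is correct: it is the standard Dedekind--Frobenius argument (splitting field, decomposition group, reduction bijection on roots, Frobenius orbits matching the degrees of the irreducible factors, conjugacy handling the choice of prime), and your reduction of the non-monic case to the monic polynomial $a_n^{n-1}f(x/a_n)$ correctly uses the hypothesis $p \nmid a_n$. The paper itself gives no proof of this lemma, citing it as classical (Jacobson, \S 4.16), so your argument is exactly the one the paper implicitly invokes; the only condensed step, that ramification would force a repeated root mod $p$, is justified as you intend, since a nontrivial inertia element fixes all residues yet must move some root, contradicting the distinctness of the reduced roots.
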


In particular, if $\Gal(f) \subseteq A_n$, then the reduction of $f$ at
any prime subject to Lemma~\ref{lem:kronecker} must correspond to an even cycle
type.  As we are approaching our main theorem via sieves, it is the
complementary notion that is of most interest to us:

\begin{definition}\label{Odddefn}
    We say a polynomial $f(x) \in \mathbb{F}_p[x]$ is \emph{odd} if it has no
    repeated roots and the permutations with cycle type corresponding to the
    factorization type of $f(x)$ are odd.  Equivalently, $f(x)$ is odd if it
    has no repeated factors and the number of its irreducible factors with even
    degree is odd.
\end{definition}

\begin{lemma}\label{lem:mobius}
    A degree $n$ polynomial $f$ over $\mathbb{F}_p$ is odd precisely if $\mu_p(f) =
    (-1)^{n+1}$, where $\mu_p(f)$ is the M\"obius function over
    $\mathbb{F}_p[x]$.
\end{lemma}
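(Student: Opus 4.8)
The plan is to unwind both sides into statements about the factorization type of $f$ over $\mathbb{F}_p$ and check they agree. First I would recall the definition of the Möbius function on the monic polynomials over $\mathbb{F}_p[x]$: $\mu_p$ is multiplicative, $\mu_p(g) = -1$ when $g$ is irreducible, and $\mu_p(g) = 0$ when $g$ is divisible by the square of an irreducible. (For non-monic $f$ one normalizes by the leading coefficient; this does not affect the vanishing locus or the sign, so I would either assume $f$ monic or remark that the leading coefficient is a unit and hence irrelevant.) So $\mu_p(f) \neq 0$ exactly when $f$ has no repeated factors, which is precisely the first clause in Definition~\ref{Odddefn} of ``odd''; I would dispose of the squareful case immediately, since there $\mu_p(f) = 0 \neq (-1)^{n+1}$, matching the fact that such $f$ is not odd.

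Next, assume $f$ is squarefree and write $f = f_1 \cdots f_r$ with the $f_i$ distinct irreducibles of degrees $d_1, \dots, d_r$, so $d_1 + \cdots + d_r = n$. By multiplicativity, $\mu_p(f) = (-1)^r$. The corresponding permutation (via Lemma~\ref{lem:kronecker}, the cycle type $(d_1)(d_2)\cdots(d_r)$) is a product of $r$ cycles of lengths $d_1, \dots, d_r$, and a single $d$-cycle has sign $(-1)^{d-1}$; hence the sign of the permutation is $\prod_{i=1}^r (-1)^{d_i - 1} = (-1)^{\sum (d_i - 1)} = (-1)^{n - r}$. Therefore the permutation is odd $\iff$ $n - r$ is odd $\iff$ $n + r$ is odd (since $n-r$ and $n+r$ have the same parity) $\iff$ $(-1)^r = (-1)^{n+1}$. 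Comparing with $\mu_p(f) = (-1)^r$, this says exactly: $f$ is odd $\iff$ $\mu_p(f) = (-1)^{n+1}$. I would also note in passing the equivalent reformulation in Definition~\ref{Odddefn}: the sign is $(-1)^{n-r}$, and $n - r = \sum_i (d_i - 1)$ changes parity precisely with the number of $d_i$ that are even, so oddness of the permutation is equivalent to an odd number of even-degree factors.

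The content here is entirely bookkeeping with parities, so there is no real obstacle; the only point requiring a modicum of care is the normalization convention for $\mu_p$ on non-monic polynomials and confirming that the statement is insensitive to it, together with being careful that ``$n-r$ odd'' and ``$n+r$ odd'' coincide. Once those are pinned down, the chain of equivalences above closes the proof.
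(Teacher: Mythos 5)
Your proof is correct and follows essentially the same route as the paper: for squarefree $f$ with $r$ irreducible factors one has $\mu_p(f)=(-1)^r$, and the whole lemma reduces to the parity comparison of $n-r$ (you phrase this via the sign $(-1)^{n-r}$ of the associated permutation, the paper via the count of even-degree factors, which are the two equivalent clauses of Definition~\ref{Odddefn}). Your explicit handling of the squareful case and of the leading-coefficient normalization is a harmless bit of extra care that the paper leaves implicit.
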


\begin{proof}
    Suppose a squarefree polynomial $f$ of degree $n$ over $\mathbb{F}_p$ has
    factorization type $\lambda_1 \cdots \lambda_r$.  Let $N_{\text{odd}} =
    \#\{ i : \lambda_i \text{ odd}\}$ and $N_{\text{even}} = \#\{ i : \lambda_i
    \text{ even}\}$ count the number of odd and even $\lambda_i$.
    Then $f$ is odd if $N_{\text{even}}$ is odd, i.e.\ if
        \[
            (-1)^{N_{\text{even}}} = -1.
        \]
    However, notice that $N_{\text{even}} = r - N_{\text{odd}}$ and that
    $N_{\text{odd}} \equiv n \Mod 2$. Thus
        \[
            (-1)^{N_{\text{even}}}
                = (-1)^{r - n} = \mu(f) (-1)^n.
        \]
    The result follows.
\end{proof}

Since we are primarily interested in the reduction of integer polynomials $f$,
when the leading coefficient of $f$ is not $\pm 1$ the degree of the reduction
of $f$ may be smaller than that of $f$.  Consequently, for a polynomial
$f \in \mathbb{F}_p[x]$, we define
    \begin{equation} \label{eqn:mu-p-n-definition}
        \mu_{p,n}(f)
            = \begin{cases}
                \mu_p(f), & \text{if $\deg(f) = n$}, \\
                0, & \text{if $\deg(f) \ne n$},
            \end{cases}
    \end{equation}
Given an integer polynomial $f \in \mathbb{Z}[x]$, we define $\mu_{p,n}(f)$ in
the expected manner by means of the reduction of $f \Mod p$.  It follows from
the above discussions that $\mu_{p,n}(f)=0$ if and only if  $p$ divides the
product of the leading coefficient of $f$ with discriminant of $f$.  Consequently,
we define the quantity $\LDisc(f)$ to be this product.

To end this section, we summarize the above discussion in the following lemma.
\begin{lemma}\label{lem:Anneverodd}
    Let $f \in \mathbb{Z}[x]$ be a polynomial of degree $n>0$ with
    $\Gal(f) \subseteq A_n$. Then $\mu_{p, n} (f) \neq (-1)^{n+1}$ for
    every prime $p$.
\end{lemma}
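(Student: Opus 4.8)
The plan is to reduce everything to identifying the factorization type of $f$ modulo $p$ with the cycle type of a Frobenius element lying in $\Gal(f)\subseteq A_n$, with the degenerate primes absorbed into the value $\mu_{p,n}(f)=0$.

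First I would dispose of the easy cases. Since $(-1)^{n+1}\neq 0$, it suffices to treat primes $p$ with $\mu_{p,n}(f)\neq 0$, i.e.\ (by the characterization recorded just before the lemma) primes $p\nmid\LDisc(f)$; in particular $\LDisc(f)\neq 0$, hence $\Disc(f)\neq 0$, so by Lemma~\ref{lem:poly-disc} $f$ has no repeated factor. (If $\Disc(f)=0$ then $\LDisc(f)=0$ is divisible by every prime, so $\mu_{p,n}(f)=0$ for all $p$ and there is nothing to prove.) For such a $p$, the reduction $\bar f:=f\bmod p$ again has degree $n$ and no repeated factor, so $\mu_{p,n}(f)=\mu_p(\bar f)\in\{\pm1\}$, and by Lemma~\ref{lem:mobius} the desired inequality $\mu_p(\bar f)\neq(-1)^{n+1}$ is equivalent to the assertion that $\bar f$ is \emph{not} odd in the sense of Definition~\ref{Odddefn}.

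To see that $\bar f$ is not odd, I would run the usual Frobenius/Dedekind argument behind Lemma~\ref{lem:kronecker}. Write $f=f_1\cdots f_r$ as a product of distinct irreducibles over $\mathbb{Z}$, let $L$ be the splitting field of $f$ (the compositum of the splitting fields of the $f_i$), and fix a prime $\mathfrak{p}\mid p$ of $L$. The Frobenius $\sigma=\mathrm{Frob}_{\mathfrak{p}}$ lies in $\Gal(L/\mathbb{Q})=\Gal(f)$ and restricts on each factor's splitting field to a Frobenius for $f_i$ at $p$; since $\LDisc(f_i)\mid\LDisc(f)$, Lemma~\ref{lem:kronecker} applies to each irreducible $f_i$ and shows that the cycle type of $\sigma$ on the roots of $f_i$ equals the factorization type of $f_i\bmod p$. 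Concatenating over $i$, the cycle type of $\sigma$ acting on all $n$ roots of $f$ is exactly the factorization type of $\bar f$. But $\sigma\in\Gal(f)\subseteq A_n$ is an even permutation, so that factorization type is even and $\bar f$ is not odd. Hence $\mu_{p,n}(f)=\mu_p(\bar f)\neq(-1)^{n+1}$.

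The one point that needs a little care is that Lemma~\ref{lem:kronecker} is stated for irreducible polynomials, whereas here $f$ may be reducible; this is why I pass to the common splitting field $L$, so that a single prime $\mathfrak{p}$ furnishes compatible Frobenius elements for all the $f_i$ at once. (One could instead argue purely multiplicatively, using $\mu_p(\bar f)=\prod_i\mu_p(\overline{f_i})$ together with the fact that the sign of $\sigma$ on the $n$ roots is the product of its signs on the roots of each $f_i$, but this is the same computation in different dress.) Everything else is just unwinding the definitions of $\mu_{p,n}$, $\LDisc$, and oddness set up earlier in the section.
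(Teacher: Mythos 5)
Your proof is correct and follows essentially the same route the paper intends: the lemma is stated as a summary of the preceding discussion, i.e.\ Lemma~\ref{lem:kronecker} combined with Definition~\ref{Odddefn}, Lemma~\ref{lem:mobius}, and the fact that $\mu_{p,n}(f)=0$ exactly when $p\mid\LDisc(f)$. Your extra step of passing to the splitting field so as to cover reducible $f$ (which Lemma~\ref{lem:kronecker} does not literally address) is a sound tightening of a point the paper leaves implicit.
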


\section{Fourier transforms of polynomials over finite fields}\label{FTsec}

Given a squarefree integer $d$, let $V_n(\mathbb{Z}/d\mathbb{Z})$ denote the
vector space of polynomials over $\mathbb{Z}/d\mathbb{Z}$ with degree at most
$n$, and let $V_n^{\mathrm{mon}}(\mathbb{Z}/d\mathbb{Z})$ denote the subset of
those that are monic of degree equal to $n$.
We identify the dual of $V_n(\mathbb{Z}/d\mathbb{Z})$ with
$(\mathbb{Z}/d\mathbb{Z})^{n+1}$ and the dual of
$V_n^\mathrm{mon}(\mathbb{Z}/d\mathbb{Z})$ with $(\mathbb{Z}/d\mathbb{Z})^{n}$.
We define the pairing between $V_n(\mathbb{Z}/d\mathbb{Z})$ and
$(\mathbb{Z}/d\mathbb{Z})^{n+1}$ coefficient-wise; namely, if
$f(x) = \sum_{i=0}^n a_i x^i$ and
$\mathbf{u} =(u_0,\dots,u_n) \in (\mathbb{Z}/d\mathbb{Z})^{n+1}$, we define
    \[
        \langle f, \mathbf{u} \rangle
            := \sum_{i=0}^n a_i u_i.
    \]
We define the pairing $\langle \cdot,\cdot\rangle_\mathrm{mon}$ between
$V_n^\mathrm{mon}(\mathbb{Z}/d\mathbb{Z})$ and $(\mathbb{Z}/d\mathbb{Z})^n$
analogously. We will typically omit ``$\mathrm{mon}$'' from the notation if
it's clear that we are working with monic polynomials.
If $\psi \colon V_n(\mathbb{Z}/d\mathbb{Z}) \to \mathbb{C}$ is a function, we
define its Fourier transforms
    \[
        \widehat\psi(\mathbf{u})
            := \frac{1}{d^{n+1}} \sum_{f \in V_n(\mathbb{Z}/d\mathbb{Z})}
               \psi(f) e_d(\langle f,\mathbf{u}\rangle),
               \quad e_d(x) := e^{\frac{2\pi i x}{d}}
    \]
for $\mathbf{u} \in (\mathbb{Z}/d\mathbb{Z})^{n+1}$ and
    \[
        \widehat\psi^\mathrm{mon}(\mathbf{v})
            := \frac{1}{d^{n}} \sum_{f \in V_n^\mathrm{mon}(\mathbb{Z}/d\mathbb{Z})}
               \psi(f) e_d(\langle f,\mathbf{u}\rangle)
    \]
for $\mathbf{v} \in (\mathbb{Z}/d\mathbb{Z})^{n}$.
Exploiting the natural map $V_n(\mathbb{Z}/d\mathbb{Z}) \to \prod_{p \mid d}
V_n(\mathbb{F}_p)$, we will be primarily interested in functions of the form
$\psi_d := \prod_{p \mid d} \psi_p$, where $\psi_p \colon V_n(\mathbb{F}_p) \to
\mathbb{C}$.  For such functions, the Fourier transform has a corresponding
factorization.

\begin{lemma}\label{lem:fourier-multiplicative}
    Let $d$ be a squarefree integer.  For each prime $p \mid d$, let
    $\psi_p\colon V_n(\mathbb{F}_p) \to \mathbb{C}$, and for any
    $f \in V_n(\mathbb{Z}/d\mathbb{Z})$,
    define $\psi_d(f) := \prod_{p\mid d} \psi_p(f)$.
    There are units
    $\alpha_p \in \mathbb{F}_p^\times$ such that for any
    $\mathbf{u} \in \mathbb{Z}^{n+1}$ and any $\mathbf{v} \in \mathbb{Z}^n$,
        \[
            \widehat \psi_d (\mathbf{u})
                = \prod_{p \mid d} \widehat\psi_p( \alpha_p \mathbf{u})
            \quad \text{and} \quad
            \widehat \psi^\mathrm{mon}_d (\mathbf{v})
                = \prod_{p \mid d} \widehat\psi^\mathrm{mon}_p( \alpha_p \mathbf{v}).
        \]
\end{lemma}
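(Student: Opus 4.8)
The plan is to exploit the Chinese Remainder Theorem isomorphism $V_n(\mathbb{Z}/d\mathbb{Z}) \cong \prod_{p \mid d} V_n(\mathbb{F}_p)$, which is an isomorphism of additive groups (in fact of $\mathbb{Z}/d\mathbb{Z}$-modules), and track carefully how the additive character $e_d$ decomposes under this isomorphism. Concretely, for each prime $p \mid d$ write $d = p m_p$ with $m_p = d/p$, and recall that the CRT isomorphism sends a residue $a \bmod d$ to the tuple $(a \bmod p)_{p \mid d}$; its inverse is given by $a \equiv \sum_{p \mid d} m_p \bar m_p a_p \pmod d$, where $\bar m_p$ is an inverse of $m_p$ modulo $p$. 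The key elementary identity is that for $a \in \mathbb{Z}/d\mathbb{Z}$ with CRT components $a_p \in \mathbb{F}_p$, one has $e_d(a) = \prod_{p \mid d} e_p(\alpha_p a_p)$ for a suitable unit $\alpha_p \in \mathbb{F}_p^\times$ (one checks $\alpha_p \equiv m_p \bar m_p \cdot ?$ — more precisely, $e_d(m_p \bar m_p a_p) = e_p(\bar m_p a_p \bmod p \cdot \text{stuff})$; the upshot is that $\alpha_p$ is a fixed unit depending only on $d$ and $p$, not on $a$). Crucially, the \emph{same} unit $\alpha_p$ works for every coordinate of the pairing, because the pairing is defined coefficient-wise and $e_d$ is a homomorphism: $e_d(\langle f, \mathbf{u}\rangle) = e_d\left(\sum_{i=0}^n a_i u_i\right) = \prod_i e_d(a_i u_i) = \prod_i \prod_{p\mid d} e_p(\alpha_p (a_i)_p u_i) = \prod_{p \mid d} e_p(\langle f_p, \alpha_p \mathbf{u}\rangle)$, where $f_p$ is the reduction of $f$ mod $p$.

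With this character decomposition in hand, the computation is a routine factorization of the defining sum. First I would fix the units $\alpha_p$ as above. Then, starting from the definition,
\[
  \widehat\psi_d(\mathbf{u})
    = \frac{1}{d^{n+1}} \sum_{f \in V_n(\mathbb{Z}/d\mathbb{Z})} \psi_d(f)\, e_d(\langle f, \mathbf{u}\rangle),
\]
I substitute $d^{n+1} = \prod_{p \mid d} p^{n+1}$, use the bijection $f \leftrightarrow (f_p)_{p \mid d}$ to rewrite the sum over $f$ as a product of sums over $f_p \in V_n(\mathbb{F}_p)$, and use both $\psi_d(f) = \prod_{p \mid d} \psi_p(f_p)$ (by hypothesis) and the character identity just established. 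This turns the single sum into
\[
  \prod_{p \mid d} \left( \frac{1}{p^{n+1}} \sum_{f_p \in V_n(\mathbb{F}_p)} \psi_p(f_p)\, e_p(\langle f_p, \alpha_p \mathbf{u}\rangle) \right)
    = \prod_{p \mid d} \widehat\psi_p(\alpha_p \mathbf{u}),
\]
which is the claim. The monic case is identical after replacing $n+1$ by $n$ throughout, noting that the CRT map restricts to a bijection $V_n^{\mathrm{mon}}(\mathbb{Z}/d\mathbb{Z}) \cong \prod_{p \mid d} V_n^{\mathrm{mon}}(\mathbb{F}_p)$ (a polynomial is monic of degree $n$ mod $d$ iff it is so mod every $p \mid d$, since $d$ is squarefree), and using the pairing $\langle \cdot, \cdot\rangle_{\mathrm{mon}}$ on $(\mathbb{Z}/d\mathbb{Z})^n$.

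The only genuine point requiring care — and the place I would slow down — is the character identity $e_d(a) = \prod_{p \mid d} e_p(\alpha_p a_p)$ and verifying that $\alpha_p$ depends only on $d$ and $p$ and is the same across all coefficient slots. This is standard (it is essentially the statement that the additive characters of $\mathbb{Z}/d\mathbb{Z}$ factor through the CRT decomposition, with the twist by $\alpha_p$ accounting for the difference between the "obvious" character $e_p$ on the factor $\mathbb{F}_p$ and the one induced by restricting $e_d$), but since the lemma's whole content is the \emph{uniformity} of $\alpha_p$, it deserves an explicit line. Everything else — the interchange of sum and product, the multiplicativity of $\psi_d$, the splitting of $d^{n+1}$ — is bookkeeping that follows immediately from $d$ being squarefree.
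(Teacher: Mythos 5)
Your proposal is correct and follows essentially the same route as the paper: both factor the sum via the CRT bijection $V_n(\mathbb{Z}/d\mathbb{Z}) \cong \prod_{p \mid d} V_n(\mathbb{F}_p)$ and absorb the CRT idempotents into a twist of the phase, the paper doing this for a two-factor splitting $d = d_1 d_2$ (with units $\overline{d_2},\overline{d_1}$) and iterating, while you treat all primes at once. The one vague spot in your write-up, the exact value of $\alpha_p$, is immediate from $e_d\bigl((d/p)x\bigr) = e_p(x)$, which gives $e_d(m_p \bar m_p a_p) = e_p(\bar m_p a_p)$ and hence $\alpha_p = \overline{d/p} \bmod p$, a unit depending only on $d$ and $p$ as you claimed.
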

\begin{proof}
    This follows from the Chinese remainder theorem, and the proof is the same
    in the general and monic cases. We give the proof for the general case.
    It suffices to prove the lemma for a squarefree factorization $d=d_1d_2$.
    A polynomial $f \in V_n(\mathbb{Z}/d\mathbb{Z})$ projects to
    $f_1 \in V_n(\mathbb{Z}/d_1\mathbb{Z})$
    and $f_2 \in V_n(\mathbb{Z}/d_2 \mathbb{Z})$.
    Conversely, given two such polynomials $f_1,f_2$, there is a unique
    polynomial $f \in V_n(\mathbb{Z}/d\mathbb{Z})$ congruent to each, namely
        \[
            f = f_1 d_2 \overline{d_2} + f_2 d_1 \overline{d_1},
        \]
    where $\overline{d_2}$ is any choice of the multiplicative inverse of
    $d_2 \Mod d_1$, with $\overline{d_1}$ defined analogously.  Then
        \begin{align*}
            \widehat \psi_d(\mathbf{u})
                &= \frac{1}{d^{n+1}}
                   \sum_{f_1 \in V_n(\mathbb{Z}/d_1\mathbb{Z})}
                   \sum_{f_2 \in V_n(\mathbb{Z}/d_2\mathbb{Z})}
                   \psi_{d_1}(f_1) \psi_{d_2}(f_2)
                   e_d(\langle f_1 d_2 \overline{d_2} + f_2 d_1 \overline{d_1},\mathbf{u}\rangle)
                   \\
                &= \widehat \psi_{d_1}(\overline{d_2} \mathbf{u})
                   \widehat \psi_{d_2}(\overline{d_1} \mathbf{u}).
        \end{align*}
    The lemma follows.
\end{proof}

In subsequent sections, Fourier transforms of this type will naturally appear
after an application of Poisson summation on the integer lattices
$\mathbb{Z}^{n+1}$ and $\mathbb{Z}^n$.  The next two lemmas will be used to
control the Fourier side of this application.

\begin{lemma}\label{lem:sharp-fourier-sum}
    Let $d$ be squarefree and suppose $\psi_d(f) = \prod_{p \mid d} \psi_p(f)$
    is a function where each $\psi_p$ satisfies
    $\widehat{\psi}_p(\mathbf{u}) \ll p^{-\alpha}$ for some $0 < \alpha < n$
    and every $\mathbf{u} \not \equiv \mathbf{0} \Mod p$, and furthermore
    $\widehat{\psi}_p(\mathbf{0}) \ll 1$. Then for any $X \geq 1$,
        \[
            \sum_{\substack{\mathbf{u} \in \mathbb{Z}^{n+1} \setminus \mathbf{0}
                            \\ \lvert u_i \rvert \leq X \, \forall i}}
            \lvert \widehat \psi_d(\mathbf{u}) \rvert
            \ll X^{n+1} d^{-\alpha}.
        \]
    Here, the sum is over $\mathbf{u} = (u_0, u_1, \ldots, u_n) \in
    \mathbb{Z}^{n+1} \setminus 0$ where each coordinate satisfies $\lvert u_i
    \rvert \leq X$. Each vector $\mathbf{u}$ is regarded in
    $(\mathbb{Z}/d\mathbb{Z})^{n+1}$ via the projection map.

    Similarly, if
    for each prime $p \mid d$, $\widehat \psi^\mathrm{mon}_p(\mathbf{0}) \ll 1$ and $\widehat \psi^\mathrm{mon}_p(\mathbf{v}) \ll p^{-\beta}$
    for some $0 < \beta < n-1$ and every $\mathbf{v} \not \equiv \mathbf{0}
    \Mod p$, then
        \[
            \sum_{\substack{\mathbf{v} \in \mathbb{Z}^{n} \setminus \mathbf{0}
                            \\ |v_i| \leq X \, \forall i}}
            \lvert \widehat \psi^\mathrm{mon}_d(\mathbf{v}) \rvert
            \ll X^{n} d^{-\beta}.
        \]
\end{lemma}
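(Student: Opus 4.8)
The plan is to expand the product $\widehat{\psi}_d(\mathbf{u}) = \prod_{p \mid d} \widehat{\psi}_p(\alpha_p \mathbf{u})$ from Lemma~\ref{lem:fourier-multiplicative}, classify each prime $p \mid d$ according to whether $\alpha_p \mathbf{u} \equiv \mathbf{0} \pmod p$ or not, and sum the resulting bound over the box $|u_i| \le X$. Fix $\mathbf{u} \in \mathbb{Z}^{n+1} \setminus \mathbf{0}$ inside the box. Since $\alpha_p$ is a unit, the condition $\alpha_p \mathbf{u} \equiv \mathbf{0} \pmod p$ is equivalent to $\mathbf{u} \equiv \mathbf{0} \pmod p$, i.e.\ $p$ divides every coordinate $u_i$. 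Write $e(\mathbf{u}) := \gcd(u_0, \ldots, u_n)$ (this gcd is a positive integer because $\mathbf{u} \neq \mathbf{0}$), and let $d_1 = \gcd(d, e(\mathbf{u}))$ collect exactly those primes $p \mid d$ with $\mathbf{u} \equiv \mathbf{0} \pmod p$; write $d = d_1 d_2$. For $p \mid d_1$ we use $\widehat{\psi}_p(\mathbf{0}) \ll 1$, and for $p \mid d_2$ we use $\widehat{\psi}_p(\alpha_p \mathbf{u}) \ll p^{-\alpha}$. Multiplying over all $p \mid d$ gives, with an implied constant raised to the $\omega(d)$ power,
\[
  \widehat{\psi}_d(\mathbf{u}) \ll C^{\omega(d)} \, d_2^{-\alpha} = C^{\omega(d)} \, (d / d_1)^{-\alpha}.
\]

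Next I would sum over $\mathbf{u}$ by first fixing $d_1$. For a given divisor $d_1 \mid d$, the number of $\mathbf{u} \in \mathbb{Z}^{n+1} \setminus \mathbf{0}$ in the box with $d_1 \mid \mathbf{u}$ (meaning $d_1 \mid u_i$ for all $i$) is at most $(2X/d_1 + 1)^{n+1} \ll (X/d_1)^{n+1}$ once $d_1 \le X$ (and it is $0$ otherwise, or just $1$ accounting for near-boundary terms, which is harmless). Hence
\[
  \sum_{\substack{\mathbf{u} \in \mathbb{Z}^{n+1} \setminus \mathbf{0} \\ |u_i| \le X}} \widehat{\psi}_d(\mathbf{u})
  \ll C^{\omega(d)} \sum_{d_1 \mid d} \Big(\frac{X}{d_1}\Big)^{n+1} \Big(\frac{d}{d_1}\Big)^{-\alpha}
  = C^{\omega(d)} X^{n+1} d^{-\alpha} \sum_{d_1 \mid d} d_1^{\alpha - n - 1}.
\]
Since $\alpha < n < n+1$, the exponent $\alpha - n - 1$ is negative, so $\sum_{d_1 \mid d} d_1^{\alpha - n - 1} \le \sum_{m=1}^\infty m^{-(n+1-\alpha)} = \zeta(n+1-\alpha) = O(1)$, a convergent sum independent of $d$. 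Finally I would absorb the factor $C^{\omega(d)}$: since $\omega(d) \ll_\epsilon d^\epsilon$ for any $\epsilon > 0$, this contributes at most $d^\epsilon$, which the statement tolerates inside the implied constant, or — if one wants the clean bound as written — one observes that $C^{\omega(d)} = O(1)$ whenever the per-prime constants can be taken to be $1$ (which is the relevant case in applications), and otherwise states the bound with an $\epsilon$. The monic case is identical, using $\widehat{\psi}^{\mathrm{mon}}_d(\mathbf{v}) = \prod_{p\mid d} \widehat{\psi}^{\mathrm{mon}}_p(\alpha_p \mathbf{v})$, replacing $n+1$ by $n$, $\alpha$ by $\beta$, and invoking $\beta < n-1 < n$ so that $\zeta(n - \beta)$ converges.

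The main obstacle is purely bookkeeping rather than conceptual: one must correctly track the dependence on the number of prime factors $\omega(d)$, since naively multiplying $\omega(d)$ bounds each with its own implied constant produces an exponential factor $C^{\omega(d)}$. This is handled either by noting the relevant constants are $1$ in the intended application (the Möbius-type weights $\psi_p$ satisfy $\widehat{\psi}_p(\mathbf{0}) \le 1$ and $|\widehat{\psi}_p(\mathbf{u})| \le p^{-\alpha}$ on the nose), or by allowing a harmless $d^\epsilon$ loss. The only other point requiring a moment's care is the edge effect near the boundary of the box when $d_1$ is close to $X$, which contributes $O(1)$ vectors and is dominated by the main term.
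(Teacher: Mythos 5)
Your proof is correct and follows essentially the same route as the paper's: partition the nonzero vectors according to the divisor of $d$ dividing all coordinates, use the multiplicativity of $\widehat\psi_d$ from Lemma~\ref{lem:fourier-multiplicative} together with the hypotheses at $\mathbf{0}$ and away from $\mathbf{0}$, count the lattice points in each class by $\ll (X/d_1)^{n+1}$, and sum the convergent divisor sum using $\alpha<n$ (resp.\ $\beta<n-1$). Your side remark about the $C^{\omega(d)}$ factor from the per-prime implied constants is a legitimate point of care that the paper's write-up glosses over, and your resolution (constants equal to $1$ in the intended application, or an absorbed $d^{\epsilon}$ loss) is exactly the right way to handle it.
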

\begin{proof}
    We prove only the general case, the monic case following mutatis mutandis.
    There are fewer than $X^{n+1}$ choices of $\mathbf{u}$ such that
    $\mathbf{u} \not \equiv 0 \Mod p$ for each prime divisor $p$ of $d$. Thus
    the total contribution from these $\mathbf{u}$ is no larger than the
    asserted quantity, by Lemma~\ref{lem:fourier-multiplicative} and our
    assumption that $\widehat{\psi}_p(\mathbf{u}) \ll p^{-\alpha}$.

    It only remains to consider those $\mathbf{u}$ that are congruent to
    $\mathbf{0}$ modulo at least one prime divisor of $d$.
    For each divisor $m \neq 1$ of $d$, let $U_m$ denote the set of
    $\mathbf{u} \in \mathbb{Z}^{n+1} \setminus \mathbf{0}$ such that $m$ is the
    maximal divisor of $d$ with $\mathbf{u} \equiv 0 \Mod m$. Stated
    differently, to each $\mathbf{u}$ we associate the maximal $m \mid d$ such
    that $\mathbf{u} \equiv 0 \Mod m$ and thereby partition these $\mathbf{u}$
    into sets $U_m$.

    For each $\mathbf{u} \in U_m$, Lemma~\ref{lem:fourier-multiplicative} gives
    that
        \[
            \widehat \psi_d(\mathbf{u})
            =
            \widehat \psi_m(\mathbf{0})
            \widehat \psi_{d/m}(c \mathbf{u})
            \ll
            m^{\alpha} d^{-\alpha},
        \]
    where $c$ is some unit depending on $d$ and $m$. As $\# U_m \ll
    (X/m)^{n+1}$, it follows that
        \[
            \sum_{\substack{\mathbf{u} \in U_m
                            \\
                            \lvert u_i \rvert \leq X \, \forall i}}
            \lvert \widehat \psi_d(\mathbf{u}) \rvert
            \ll
            \left(\frac{m}{d}\right)^\alpha \left( \frac{X}{m} \right)^{n+1}
            \ll
            \frac{X^{n+1}}{d^\alpha m^{n+1-\alpha}}.
        \]
    As $d$ is squarefree, we have that
        \[
            \sum_{m \mid d} \frac{1}{m^{n+1-\alpha}}
            =
            \prod_{p \mid d} \left( 1 + \frac{1}{p^{n+1-\alpha}} \right),
        \]
    which is absolutely bounded since $\alpha < n$. This proves the claim.
\end{proof}

\begin{lemma}\label{lem:smooth-fourier-sum}
    Make the same assumptions as in Lemma~\ref{lem:sharp-fourier-sum}.  If $\phi \colon \mathbb{R}^{n+1} \to \mathbb{R}$ is Schwartz, then for any $X >0$,
        \[
            \sum_{\mathbf{u} \in \mathbb{Z}^{n+1}} \widehat \psi_d(\mathbf{u}) \phi(\mathbf{u}/X)
                = \widehat \psi_d(\mathbf{0}) \phi(\mathbf{0}) + O_{\phi, n}(X^{n+1} d^{-\alpha}),
        \]
    and if $\phi \colon \mathbb{R}^{n} \to \mathbb{R}$ is Schwartz, then
        \[
            \sum_{\mathbf{v} \in \mathbb{Z}^{n}} \widehat \psi^{\mathrm{mon}}_d(\mathbf{v}) \phi(\mathbf{v}/X)
                = \widehat \psi^\mathrm{mon}_d(\mathbf{0}) \phi(\mathbf{0}) + O_{\phi, n}(X^{n} d^{-\beta})
        \]
    for any $X > 0$.
\end{lemma}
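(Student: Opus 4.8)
The plan is to isolate the main term $\widehat\psi_d(\mathbf{0})\phi(\mathbf{0})$ as the $\mathbf{u}=\mathbf{0}$ contribution and bound everything else by the rapid decay of the Schwartz function $\phi$ together with Lemma~\ref{lem:sharp-fourier-sum}. First I would write
\[
\sum_{\mathbf{u}\in\mathbb{Z}^{n+1}}\widehat\psi_d(\mathbf{u})\phi(\mathbf{u}/X)
= \widehat\psi_d(\mathbf{0})\phi(\mathbf{0})
+ \sum_{\mathbf{u}\in\mathbb{Z}^{n+1}\setminus\mathbf{0}}\widehat\psi_d(\mathbf{u})\phi(\mathbf{u}/X),
\]
so the task reduces to showing the tail sum is $O_\phi(X^{n+1}d^{-\alpha})$.

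The key step is a dyadic decomposition of the tail according to the size of $\mathbf{u}$. For each integer $k\ge 0$, let $S_k$ be the set of $\mathbf{u}\in\mathbb{Z}^{n+1}\setminus\mathbf{0}$ with $\lvert u_i\rvert\le 2^k X$ for all $i$ (a ``sharp cube'' of side $2^k X$). Since $\phi$ is Schwartz, for any fixed $N$ (to be chosen larger than $n+1-\alpha$, say $N = n+2$) there is a constant $C_{\phi,N}$ with $\lvert\phi(\mathbf{y})\rvert \le C_{\phi,N}(1+\lVert\mathbf{y}\rVert)^{-N}$. Thus on the dyadic shell where some coordinate of $\mathbf{u}$ has absolute value in $(2^{k-1}X, 2^k X]$ — which is contained in $S_k\setminus S_{k-1}$ — we have $\lvert\phi(\mathbf{u}/X)\rvert \ll_\phi 2^{-kN}$. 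Applying the first bound of Lemma~\ref{lem:sharp-fourier-sum} with $2^k X$ in place of $X$ gives $\sum_{\mathbf{u}\in S_k\setminus\mathbf{0}}\widehat\psi_d(\mathbf{u}) \ll (2^k X)^{n+1}d^{-\alpha}$, and combining this with the decay of $\phi$ on the $k$-th shell yields a contribution $\ll_\phi 2^{-kN}(2^k X)^{n+1}d^{-\alpha} = 2^{k(n+1-N)}X^{n+1}d^{-\alpha}$ from that shell. Summing the geometric series over $k\ge 0$ (which converges because $N > n+1$) produces the desired bound $O_\phi(X^{n+1}d^{-\alpha})$. Strictly speaking one should estimate the sum over a dyadic shell by $\bigl|\sum_{S_k}\bigr| + \bigl|\sum_{S_{k-1}}\bigr|$ and use the triangle inequality against the pointwise bound on $\phi$; the details are routine once the dyadic framework is set up. The monic case is identical, using the second half of Lemma~\ref{lem:sharp-fourier-sum}, the exponent $\beta < n-1$, cubes in $\mathbb{Z}^n$, and $N > n$.

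The main obstacle — really the only subtlety — is bookkeeping the interplay between the sharp cube bounds of Lemma~\ref{lem:sharp-fourier-sum} and the smooth weight: Lemma~\ref{lem:sharp-fourier-sum} controls sums over boxes $\lvert u_i\rvert\le X$ uniformly in $d$, but $\phi(\mathbf{u}/X)$ does not vanish outside any box, so one genuinely needs the dyadic-shell argument rather than a single application. One must also confirm that the hypotheses of Lemma~\ref{lem:sharp-fourier-sum} (in particular $\alpha < n$, resp.\ $\beta < n-1$) are exactly what make the geometric sum $\sum_k 2^{k(n+1-N)}$ — together with the implicit constant's independence of $d$ — go through with a bound depending only on $\phi$ (and $n$), which they do since we are free to take $N$ as large as we like. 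No estimate on $\widehat\psi_d(\mathbf{0})$ beyond $\widehat\psi_p(\mathbf{0})\ll 1$ is needed, since that term is pulled out exactly.
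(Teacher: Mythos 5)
Your proposal is correct in substance and is essentially the paper's proof: the paper disposes of this lemma in one line (it ``follows from Lemma~\ref{lem:sharp-fourier-sum} and partial summation''), and your decomposition, pulling out the main term at $\mathbf{u}=\mathbf{0}$ and playing the Schwartz decay of $\phi$ against the box bounds at scale $2^kX$ over dyadic shells, is the natural way to carry out that partial summation. One point should be tightened: on a given shell you bound the weighted sum by $\sup_{\text{shell}}\lvert\phi(\mathbf{u}/X)\rvert$ times a sum of $\widehat\psi_d(\mathbf{u})$, and for this you need $\sum\lvert\widehat\psi_d(\mathbf{u})\rvert$ over the box, not the signed sum appearing in the statement of Lemma~\ref{lem:sharp-fourier-sum}; your suggested fix via $\bigl\lvert\sum_{S_k}\bigr\rvert+\bigl\lvert\sum_{S_{k-1}}\bigr\rvert$ does not repair this, since $\phi$ is not constant on the shell and a uniform bound on $\lvert\phi\rvert$ cannot be factored out of a signed sum. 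The wrinkle is harmless, because every step in the proof of Lemma~\ref{lem:sharp-fourier-sum} is a pointwise bound on $\lvert\widehat\psi_d(\mathbf{u})\rvert$ times a count of lattice points, so that proof in fact yields the stronger estimate $\sum_{\mathbf{0}\ne\mathbf{u},\,\lvert u_i\rvert\le X}\lvert\widehat\psi_d(\mathbf{u})\rvert\ll X^{n+1}d^{-\alpha}$ (and likewise with $\beta$ in the monic case); invoking that absolute-value form makes your shell-by-shell bound, and hence the whole argument, complete.
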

\begin{proof}
  We briefly describe the general case, as the monic case is nearly identical.
  The idea is to use the rapid decay of $\phi$ and a form of annular partial
  summation.
  As $\phi$ is Schwartz, we have that $\lvert \phi(\mathbf{u}) \rvert \ll_\phi \lvert
  \mathbf{u} \rvert^{-n-3}$, hence
    \begin{align*}
      \sum_{\mathbf{u} \in \mathbb{Z}^{n+1}} \widehat \psi_d(\mathbf{u}) \phi(\mathbf{u}/X)
      -
      \widehat \psi_d(\mathbf{0}) \phi(\mathbf{0})
      &=
      \sum_{\mathbf{u} \in \mathbb{Z}^{n} \setminus \mathbf{0}} \widehat \psi_d(\mathbf{u}) \phi(\mathbf{u}/X)
      \ll_\phi
      \sum_{k \geq 0}
      \sum_{\substack{%
        \mathbf{u} \in \mathbb{Z}^{n} \setminus \mathbf{0}
        \\
        kX \leq \lvert \mathbf{u} \rvert \leq (k+1)X
      }} \lvert \widehat \psi_d(\mathbf{u}) \rvert \lvert \phi(\mathbf{u}/X) \rvert
      \\
      &\ll_\phi
      X^{n+1} d^{-\alpha}
      +
      \sum_{k \geq 1}
      \Big(\sum_{\substack{%
        \mathbf{u} \in \mathbb{Z}^{n} \setminus \mathbf{0}
        \\
        kX \leq \lvert \mathbf{u} \rvert \leq (k+1)X
      }} \lvert \widehat \psi_d(\mathbf{u}) \rvert \Big)
      k^{-n-3}
      \\
      &\ll_\phi
      X^{n+1} d^{-\alpha}
      +
      \sum_{k \geq 1}
      \big((k+1) X\big)^{n+1} d^{-\alpha}
      k^{-n-3}
      \ll_{\phi,n}
      X^{n+1} d^{-\alpha},
    \end{align*}
    where we have repeatedly applied Lemma~\ref{lem:sharp-fourier-sum}.
\end{proof}

Finally, we consider the functions that will be of interest to us in the next section, recalling
relatively recent work of Porritt~\cite{porritt2018note} on the function field analogue of bounds
for sums $\max_{\theta} \lvert \sum_{n \leq x} \mu(n) e(n\theta) \rvert$.  There is also work of
Bienvenu and L\^{e}~\cite{BienvenuLe} that is qualitatively of the same quality as Porritt's, but
less precise for our particular purpose. Additionally, there is also work of Dietmann, Ostafe, and
Shparlinski \cite{DietmannOstafeShparlinski} that exploits cancellation in the Fourier transform of
the M\"obius function in a closely related sieve problem;
see in particular~\cite[Lemma 2.7, Lemma 3.4]{DietmannOstafeShparlinski}.

\begin{lemma}\label{lem:fourier-mobius}
    Let $n \geq 3$, $p$ be prime, and define
        \[
            \psi_p(f)
                := \frac{1 + (-1)^{n+1} \mu_{p,n}(f)}{2},
        \]
    where $\mu_{p,n}$ is as in~\eqref{eqn:mu-p-n-definition}.
    Then $\widehat \psi_p(\mathbf{0}) = \widehat \psi^\mathrm{mon}_p(\mathbf{0}) = 1/2$
    and $\widehat \psi_p(\mathbf{u}),\widehat \psi^\mathrm{mon}_p(\mathbf{u}) \ll_n p^{\frac{1-n}{4}}$
    for $\mathbf{u} \not \equiv \mathbf{0} \Mod p$.
\end{lemma}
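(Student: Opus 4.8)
The plan is to compute the two Fourier transforms directly from the definitions and reduce everything to exponential sums of the shape $\sum_{f} \mu_{p,n}(f) e_p(\langle f, \mathbf{u}\rangle)$, taken over $V_n(\mathbb{F}_p)$ in the general case and over $V_n^{\mathrm{mon}}(\mathbb{F}_p)$ in the monic case. Since $\psi_p(f) = \tfrac12 + \tfrac{(-1)^{n+1}}{2}\mu_{p,n}(f)$, linearity of the Fourier transform splits $\widehat\psi_p$ into the transform of the constant function $\tfrac12$, which contributes $\tfrac12$ at $\mathbf{u}=\mathbf{0}$ and $0$ otherwise by orthogonality of characters, plus $\tfrac{(-1)^{n+1}}{2}$ times the transform of $\mu_{p,n}$. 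At $\mathbf{u}=\mathbf{0}$ the latter transform is the average of $\mu_{p,n}(f)$ over all of $V_n(\mathbb{F}_p)$; this is $0$ for $n\ge 1$ (the number of squarefree polynomials of degree $n$ with an even number of irreducible factors equals the number with an odd number, a standard function-field identity, e.g. from $\sum_{\deg g = n}\mu_p(g) = 0$ for $n\ge 2$ and a direct check for small degree where one also has to account for the vanishing contribution of non-squarefree $f$ and, in the non-monic case, of $f$ with $\deg f < n$ which are assigned $\mu_{p,n}=0$). Hence $\widehat\psi_p(\mathbf{0}) = \widehat\psi^{\mathrm{mon}}_p(\mathbf{0}) = \tfrac12$, as claimed.

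For $\mathbf{u}\not\equiv\mathbf{0}\pmod p$ the constant part drops out and we are left with bounding $\frac{1}{2p^{n+1}}\bigl|\sum_{f\in V_n(\mathbb{F}_p)} \mu_{p,n}(f)\, e_p(\langle f,\mathbf{u}\rangle)\bigr|$ and its monic analogue. The pairing $\langle f,\mathbf{u}\rangle = \sum_{i} a_i u_i$ is an additive character in the coefficients, and since $f\mapsto f(t)$ for a fixed $t\in\mathbb{F}_p$ is linear in the coefficients, one can try to realize $\langle f,\mathbf{u}\rangle$ as (a combination of) evaluations $f(t_j)$; more robustly, the sum $\sum_f \mu_p(f) e_p(\langle f,\mathbf{u}\rangle)$ over polynomials of degree $n$ is exactly the kind of twisted sum of the function-field Möbius function by an additive character that Porritt bounds. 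His result (the function-field analogue of $\max_\theta|\sum_{n\le x}\mu(n)e(n\theta)|$) gives square-root-type cancellation: a bound of size $\ll_n p^{n+1 + \frac{1-n}{4}}$ or thereabouts for the non-monic sum (which has $p^{n+1}$ terms) and $\ll_n p^{n + \frac{1-n}{4}}$ for the monic sum (with $p^n$ terms), where the exponent $\frac{1-n}{4}$ is precisely what yields the stated $p^{(1-n)/4}$ after dividing by $p^{n+1}$ (resp. $p^n$). I would quote the relevant statement from \cite{porritt2018note} in the normalization matching our pairing, being careful that Porritt's result is phrased for a single additive twist $e_p(\operatorname{tr}(\cdots))$ and that translating our coefficient pairing $e_p(\sum a_i u_i)$ into that form is a linear change of variables on the coefficient space (and costs nothing since such a change permutes the polynomials of degree $n$ when $u_n\not\equiv 0$, and reduces to a lower-dimensional but still nontrivial sum when $u_n\equiv 0$ but some earlier $u_i\neq 0$).

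The main obstacle I anticipate is exactly this bookkeeping: matching the exact shape of the exponential sum to the hypotheses of Porritt's theorem, and in particular handling the degenerate frequencies $\mathbf{u}\not\equiv\mathbf 0$ for which the leading-coefficient variable $u_n$ is $\equiv 0\pmod p$. In the monic case $a_n$ is fixed at $1$ so $u_n$ is not a free variable to begin with and this issue is milder, but one still must ensure that when the top several $u_i$ vanish the remaining character is still ``generic enough'' in the function-field sense (i.e. corresponds to a twist by a character of conductor at least $2$, so that Porritt's cancellation applies rather than the trivial bound). A clean way to organize this is: (i) if the character $e_p(\langle\cdot,\mathbf u\rangle)$ factors through $f\mapsto f\bmod x^k$ for some small $k$, the sum over the high-degree coefficients is handled by summing $\mu_p$ over polynomials in a fixed residue class mod a small modulus — again covered by Porritt, or by the simpler function-field prime number theorem in arithmetic progressions — while (ii) the generic case is a direct citation. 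The constant in $\ll_n$ then depends on $n$ through the number of such cases and Porritt's implied constant, which is why the final bound is stated with an implied constant depending on $n$.
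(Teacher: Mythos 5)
Your handling of the zero frequency and of the monic case matches the paper's proof: at $\mathbf{u}=\mathbf{0}$ both transforms equal $1/2$ because the M\"obius function sums to zero over (monic or all) degree-$n$ polynomials for $n \geq 2$, and for nonzero frequencies the monic bound is a direct citation of Porritt. Your worry about ``degenerate'' monic frequencies is a non-issue: Porritt's bound is uniform over all nontrivial additive characters of the coefficient space, so no separate case analysis (characters factoring through $f \bmod x^k$, M\"obius in progressions to small moduli) is needed there.

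The genuine gap is in the non-monic case, which you correctly identify as the obstacle but do not resolve. Porritt's theorem is a statement about \emph{monic} polynomials, and the mechanism you propose --- a linear change of variables on the coefficient space that ``permutes the polynomials of degree $n$ when $u_n \not\equiv 0$'' --- fails as stated, because a general linear substitution on coefficients does not commute with $\mu_{p,n}$: after such a substitution you no longer have a M\"obius-twisted sum of the shape Porritt bounds. The missing idea, and the paper's actual argument, is the invariance of $\mu_p$ under scaling by the leading coefficient: if $f = a_n x^n + \cdots + a_0$ with $a_n \in \mathbb{F}_p^\times$, then $\mu_p(f) = \mu_p(f/a_n)$. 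Grouping degree-$n$ polynomials by leading coefficient $c$ and writing $f = cg$ with $g$ monic turns the character into $e_p(c u_n)\, e_p(\langle g, c\mathbf{u}'\rangle_{\mathrm{mon}})$ with $\mathbf{u}' = (u_0,\dots,u_{n-1})$, so that $\widehat\psi_p(\mathbf{u})$ is an average over $c \in \mathbb{F}_p^\times$ of unimodular phases times $\widehat\psi_p^{\mathrm{mon}}(c\mathbf{u}')$; Porritt then applies term by term when $\mathbf{u}' \not\equiv \mathbf{0}$, and in the one remaining case $u_0 \equiv \cdots \equiv u_{n-1} \equiv 0$, $u_n \not\equiv 0$, the inner sum is $\sum_g \mu_p(g) = 0$. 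With this scaling reduction your outline closes; without it, the non-monic half of the lemma is unproved. (A minor additional slip: the vanishing of the full M\"obius average holds for $n \geq 2$, not $n \geq 1$, though this is harmless since $n \geq 3$ here.)
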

\begin{proof}
    The claim about $\widehat \psi_p(\mathbf{0})$ and $\widehat \psi_p^\mathrm{mon}(\mathbf{0})$
    follows from the classical fact that
        \[
            \sum_{f \in V_n(\mathbb{F}_p)} \mu_p(f)
            =
            \sum_{f \in V_n^\mathrm{mon}(\mathbb{F}_p)} \mu_p(f)
            =
            0
        \]
    for any $n \geq 2$.
    For $\mathbf{u} \not \equiv \mathbf{0}$, the claim about $\widehat \psi_p^\mathrm{mon}(\mathbf{u})$
    follows from~\cite[Theorem 1]{porritt2018note}.  For $\widehat \psi_p(\mathbf{u})$,
    we note that if $f(x) = a_n x^n + \cdots + a_0 \in \mathbb{F}_p[x]$ with
    $a_n \in \mathbb{F}_p^\times$, then $\mu_p(f) = \mu_p(f/a_n)$.  Consequently,
        \[
            \widehat \psi_p(\mathbf{u})
            =
            \frac{1}{p} \sum_{c \in \mathbb{F}_p^\times} \widehat \psi_p^\mathrm{mon}(c \mathbf{u}),
        \]
    which again may be bounded by~\cite[Theorem 1]{porritt2018note}.
\end{proof}

Combining Lemma~\ref{lem:smooth-fourier-sum} with Lemma~\ref{lem:fourier-mobius},
we immediately obtain the following corollary.

\begin{corollary}\label{cor:smooth-mobius}
    Let $n \geq 3$, let $\psi_p(f) = \frac{1 + (-1)^{n+1} \mu_{p,n}(f)}{2}$ for
    each prime $p$, and for squarefree $d$, let $\psi_d(f) = \prod_{p \mid d} \psi_p(f)$.
    If $\phi\colon \mathbb{R}^{n+1} \to \mathbb{R}$ is Schwartz, then for any
    $X > 0$ and any squarefree $d$,
        \[
            \sum_{\mathbf{u} \in \mathbb{Z}^{n+1}} \widehat \psi_d(\mathbf{u}) \phi(\mathbf{u}/X)
                = \frac{\phi(\mathbf{0})}{2^{\omega(d)}} + O_{\phi, n}(X^{n+1} d^{\frac{1-n}{4}}),
        \]
    where $\omega(d)$ denotes the number of distinct prime divisors of $d$.
    Similarly, if $\phi\colon\mathbb{R}^{n} \to \mathbb{R}$ is Schwartz, then
    for any $X > 0$ and any squarefree $d$,
        \[
            \sum_{\mathbf{v} \in \mathbb{Z}^{n}} \widehat \psi_d(\mathbf{v}) \phi(\mathbf{v}/X)
                = \frac{\phi(\mathbf{0})}{2^{\omega(d)}} + O_{\phi, n}(X^{n} d^{\frac{1-n}{4}}).
        \]
\end{corollary}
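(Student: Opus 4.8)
The plan is to obtain this as a direct combination of Lemma~\ref{lem:fourier-mobius}, Lemma~\ref{lem:fourier-multiplicative}, and Lemma~\ref{lem:smooth-fourier-sum}, so the work is entirely in checking that the hypotheses line up and in identifying the main term. First I would apply Lemma~\ref{lem:fourier-mobius} to the functions $\psi_p(f) = \tfrac{1 + (-1)^{n+1}\mu_{p,n}(f)}{2}$: it gives $\widehat\psi_p(\mathbf{0}) = \widehat\psi^\mathrm{mon}_p(\mathbf{0}) = 1/2 \ll 1$ and $\widehat\psi_p(\mathbf{u}), \widehat\psi^\mathrm{mon}_p(\mathbf{u}) \ll_n p^{\frac{1-n}{4}}$ for $\mathbf{u}\not\equiv\mathbf{0}\pmod p$. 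Hence the standing assumptions of Lemma~\ref{lem:sharp-fourier-sum}, and therefore of Lemma~\ref{lem:smooth-fourier-sum}, hold with $\alpha = \beta = \tfrac{n-1}{4}$, once we confirm that this exponent lies in the admissible range: in the general case we need $0 < \tfrac{n-1}{4} < n$, and in the monic case $0 < \tfrac{n-1}{4} < n-1$, both of which are valid for $n \geq 2$ and in particular under the standing hypothesis $n \geq 3$ inherited from Porritt's bound.

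Next I would invoke Lemma~\ref{lem:smooth-fourier-sum} verbatim. In the general case it produces
\[
\sum_{\mathbf{u}\in\mathbb{Z}^{n+1}} \widehat\psi_d(\mathbf{u})\,\phi(\mathbf{u}/X)
= \widehat\psi_d(\mathbf{0})\,\phi(\mathbf{0}) + O_\phi\!\left(X^{n+1} d^{-\frac{n-1}{4}}\right),
\]
and in the monic case the same identity with $X^{n+1}$ replaced by $X^{n}$ and $\widehat\psi_d(\mathbf{0})$ replaced by $\widehat\psi^\mathrm{mon}_d(\mathbf{0})$. To finish it remains to evaluate the main term, and here I would use Lemma~\ref{lem:fourier-multiplicative} rather than compute directly: since $\psi_d = \prod_{p\mid d}\psi_p$, that lemma gives $\widehat\psi_d(\mathbf{0}) = \prod_{p\mid d}\widehat\psi_p(\alpha_p\mathbf{0}) = \prod_{p\mid d}\widehat\psi_p(\mathbf{0}) = \prod_{p\mid d}\tfrac12 = 2^{-\omega(d)}$, and identically $\widehat\psi^\mathrm{mon}_d(\mathbf{0}) = 2^{-\omega(d)}$. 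Rewriting $-\tfrac{n-1}{4} = \tfrac{1-n}{4}$ then yields exactly the two asserted displays.

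I do not expect any genuine obstacle, since all the analytic content has already been established; the corollary really is immediate. The only two points calling for a moment of care are (i) verifying that the Porritt exponent $\tfrac{n-1}{4}$ falls strictly below $n$ (respectively $n-1$), so that the geometric-series step inside Lemma~\ref{lem:sharp-fourier-sum} converges and its conclusion is applicable, and (ii) routing the evaluation of $\widehat\psi_d(\mathbf{0})$ through the multiplicativity in Lemma~\ref{lem:fourier-multiplicative} so that the factor $2^{-\omega(d)}$ appears cleanly.
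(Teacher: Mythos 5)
Your proposal is correct and matches the paper's own (one-line) proof, which obtains the corollary by combining Lemma~\ref{lem:smooth-fourier-sum} with Lemma~\ref{lem:fourier-mobius}; your extra checks that $\alpha=\beta=\tfrac{n-1}{4}$ lies in the admissible range and that $\widehat\psi_d(\mathbf{0})=2^{-\omega(d)}$ via Lemma~\ref{lem:fourier-multiplicative} are exactly the implicit steps the paper leaves to the reader.
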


\section{A modified Selberg sieve}\label{sec:modified_sieve}

In this section, we introduce a modified version of the classical Selberg
sieve.  Our goal is to prove the following version, which we later specialize
using the results from the previous section.

\begin{proposition}\label{prop:modified-selberg}
    Let $n$ be a positive integer, $H$ and $D$ be real with $H, D \geq 1$,
    $\phi \colon V_n(\mathbb{R}) \rightarrow \mathbb{R}$ be a non-negative
    Schwartz function, and $\{\lambda_d\}$ be a sequence of real numbers
    indexed by squarefree integers $d \leq D$, with $\lambda_1 = 1$.
    Then
    \begin{equation}\label{eq:generalizedselberg}
        \sum_{\substack{f \in V_n(\mathbb{Z})
                        \\ \Gal(f) \subseteq A_n
                        \\ \LDisc(f) \ne 0}}
        \frac{\phi(f/H)}{2^{\omega(\LDisc(f))}}
        \leq
        \sum_{d_1,d_2} \lambda_{d_1} \lambda_{d_2}
        \sum_{f \in V_n(\mathbb{Z})} \phi(f/H) \prod_{p \mid [d_1,d_2]}
        \left( \frac{1 + (-1)^{n+1}\mu_{p,n}(f)}{2}\right).
    \end{equation}
\end{proposition}

This proposition can be viewed as a generalization of the Selberg sieve. Before
giving the proof, we first describe what can be obtained by the classical
Selberg sieve. (For a treatment of the classical Selberg sieve,
see~\cite[\S7]{friedlander2010opera}.) As in the statement of the proposition, we'll
assume $\lambda_d$ is a sequence of real numbers indexed by squarefree $d \leq D$ with
$\lambda_1 = 1$.

Using the classical Selberg sieve, we would start with the fact that
\begin{equation}\label{eq:classicalselberg}
    \sum_{f \in V_n(\mathbb{Z})} \phi(f/H)
    \left(
      \sum_{d: f \Mod{p} \, \text{is odd}, \forall p \mid d}
      \lambda_d
    \right)^2 \geq 0.
\end{equation}

On one hand, expanding the left hand side of~\eqref{eq:classicalselberg} we see it
is equal to
\begin{equation}\label{selbergRHS}
    \sum_{d_1, \, d_2} \lambda_{d_1} \lambda_{d_2}
    \sum_{f \in V_n(\mathbb{Z}): f \Mod{p} \, \text{is odd for every} \, p \mid [d_1,d_2]} \phi(f/H).
\end{equation}

On the other hand, when $f \in V_n (\mathbb{Z})$ and $\Gal(f) \subseteq A_n$, by
Lemma~\ref{lem:kronecker} we see that $f \Mod p$ is never odd for prime $p$ and thus
$\sum_{d: f \Mod{p} \, \text{is odd}, \forall p \mid d} \lambda_d = \lambda_1 = 1$.
By the non-negativity of $\phi$, we see (\ref{eq:classicalselberg}) is at least
\[
  \sum_{\substack{f \in V_n(\mathbb{Z}) \\ \Gal(f) \subseteq A_n}} \phi (f/H).
\]
Hence we have
\begin{equation}\label{eq:selbergsievebound}
    \sum_{\substack{f \in V_n(\mathbb{Z}) \\ \Gal(f) \subseteq A_n}} \phi (f/H)
    \leq \sum_{d_1,d_2} \lambda_{d_1} \lambda_{d_2}
    \sum_{f \in V_n(\mathbb{Z}): f \Mod p \, \text{is odd for every} \, p \mid [d_1,d_2]} \phi(f/H).
\end{equation}
The inequality~\eqref{eq:generalizedselberg} in Proposition~\ref{prop:modified-selberg}
should be compared with~\eqref{eq:selbergsievebound}.

We initially attempted to use~\eqref{eq:selbergsievebound} instead
of~\eqref{eq:generalizedselberg}, but the results are less satisfactory. The main reason
is that the characteristic function $1_{p, n}^{\mathrm{odd}}$ of odd polynomials in
$\mathbb{F}_p [x]$ of degree $n$ have very large Fourier coefficients away from $0$.

This is due to the fact (following from Lemma~\ref{lem:mobius}) that
$1_{p, n}^{\mathrm{odd}} = \frac{(-1)^{n+1}\mu_{p, n} + \mu_{p, n}^2}{2}$,
since $\mu_{p, n}^2$ is supported on square-free polynomials of degree exactly $n$.
As noted in Section~\ref{FTsec}, we expect the Fourier transform of $\mu_{p,
n}$ to behave well (i.e.\ be small) away from $0$, but one can show that
$\mu_{p, n}^2$ has large Fourier coefficients away from $0$ (see Remark \ref{largecoefficients} for a similar phenomenon in the monic case).

In order to circumvent this issue, we modify the Selberg sieve to produce the key
inequality~\eqref{eq:generalizedselberg}. The right hand side
of~\eqref{eq:generalizedselberg} maintains the strong Fourier decay of $\mu_{p, n}(f)$ (as
shown in Lemma~\ref{lem:fourier-mobius} and Corollary~\ref{cor:smooth-mobius}) in the
local computations after Poisson summation.

Compared to the classical Selberg sieve, the right hand side
of~\eqref{eq:generalizedselberg} has more complicated local factors
$\frac{1 + (-1)^{n+1}\mu_{p,n}(f)}{2}$ that can take the value $1/2$ in addition to the
typical $1$ and $0$. On the left hand side, we have a mild divisor-bound-type loss
$2^{-\omega(\LDisc(f))}$. This factor does not meaningfully detract from this application.

We now prove Proposition~\ref{prop:modified-selberg}.

\begin{proof}
    The fundamental idea of this proof is to use certain non-negative definite quadratic
    forms instead of the complete square $(\sum \lambda_d)^2$.

    For each $f \in V_n (\mathbb{Z})$, define the quadratic form $Q_f$ in the
    variables $\{\lambda_d\}$
        \[
            Q_f (\{\lambda_d\})
            =
            \sum_{d_1, d_2} \prod_{p \mid [d_1,d_2]}
            \left(\frac{1 + (-1)^{n+1}\mu_{p,n}(f)}{2}\right)
            \lambda_{d_1}\lambda_{d_2}.
        \]

    We claim that each $Q_f$ is non-negative definite. To see this, temporarily extend $Q_f$
    to a form on more variables
    $\{\lambda_d: d \text{ squarefree}, \text{ every prime factor of } d \text{ is }\leq D\}$
    using the same definition above. By definition the $(d_1, d_2)$-entry $q_{f, d_1, d_2}$
    of the matrix of $Q_f$ is equal to
    $\prod_{p \mid [d_1,d_2]} \left( \frac{1 + (-1)^{n+1}\mu_{p,n}(f)}{2}\right)$. In other
    words, if we write $\psi_{p} (f) = \frac{1 + (-1)^{n+1}\mu_{p,n}(f)}{2}$, then
        \[
            q_{f, d_1, d_2}
            =
            \bigg(\prod_{p \leq D, p \nmid d_1, p\nmid d_2} 1\bigg)
            \cdot
            \bigg(\prod_{p \leq D, p \nmid d_1, p\mid d_2} \psi_{p} (f)\bigg)
            \cdot
            \bigg(\prod_{p \leq D, p \mid d_1, p\nmid d_2} \psi_{p} (f)\bigg)
            \cdot
            \bigg(\prod_{p \leq D, p \mid d_1, p\mid d_2} \psi_{p} (f)\bigg).
        \]
    Hence the matrix of the (extended) form $Q_f$ is a tensor
    product of matrices $M_p (p \leq D \text{ prime})$ with
    $M_p = \begin{pmatrix} 1 & \psi_{p} (f)\\ \psi_{p} (f) & \psi_{p} (f) \end{pmatrix}$.
    More explicitly,
    $M_p = \begin{pmatrix} 1 & 1\\ 1 & 1 \end{pmatrix}$ for $\mu_{p, n} (f) = (-1)^{n+1}$,
    $M_p = \begin{pmatrix} 1 & 0 \\ 0  & 0 \end{pmatrix}$ for $\mu_{p, n} (f) = (-1)^{n}$ and
    $M_p = \begin{pmatrix} 1 & 1/2 \\ 1/2  & 1/2 \end{pmatrix}$ for $\mu_{p, n} (f) = 0$.
    From this we see the (extended) form $Q_f$ is non-negative definite. Since the original
    $Q_f$ is obtained by specifying all $\lambda_d = 0$ for $d > D$ in the extended form,
    the original form is also non-negative definite.

    We now show that whenever $\Gal(f) \subseteq A_n$ and $\LDisc(f) \neq 0$, we have
    $Q_f \geq 2^{-\omega(\LDisc(f))} \lambda_1^2 = 2^{-\omega(\LDisc(f))}$. It suffices to
    show this for the extended form $Q_f$ as described just above.
    When $\Gal(f) \subseteq A_n$ and $\LDisc(f) \neq 0$, Lemma~\ref{lem:Anneverodd} gives
    that $\mu_{p, n} (f) \neq (-1)^{n+1}$.
    Hence $M_p = \begin{pmatrix} 1 & 0 \\ 0  & 0 \end{pmatrix}$ for $p\nmid \LDisc(f)$ and
    $M_p = \begin{pmatrix} 1 & 1/2 \\ 1/2  & 1/2 \end{pmatrix}$ for $p \mid \LDisc(f)$.
    Note that as matrices $\begin{pmatrix} 1 & 1/2 \\ 1/2  & 1/2 \end{pmatrix} \geq
    \begin{pmatrix} 1/2 & 0 \\ 0  & 0 \end{pmatrix} \geq 0$, where $A \geq B$ means that
    $A-B$ is non-negative definite. Hence as a tensor product, the matrix of the (extended)
    form $Q_f$ is $\geq \begin{pmatrix} 2^{-\omega(\LDisc(f))} & 0 & \cdots & 0 \\ 0 & 0 &
    \cdots & 0 \\ \cdots & \cdots & \cdots & \cdots \\ 0 & 0 & \cdots & 0 \end{pmatrix}$,
    which shows that
    \begin{equation}\label{eq:Qf_lowerbound}
        Q_f \geq 2^{-\omega(\LDisc(f))} \lambda_1^2.
    \end{equation}

    The remainder of the proposition is now straightforward.
    The right hand side of~\eqref{eq:generalizedselberg} is equal to
    $\sum_{f \in V_n (\mathbb{Z})} \phi (f/H) Q_f$. On the other hand, applying the lower
    bound~\eqref{eq:Qf_lowerbound} gives precisely the left hand side
    of~\eqref{eq:generalizedselberg}.
\end{proof}

A similar proof gives also the monic version, which we record as the following
proposition.

\begin{proposition}\label{prop:modified-selberg-monic}
    Let $n$ be a positive integer, $H$ and $D$ be real with $H, D \geq 1$,
    $\phi \colon V^{\mathrm{mon}}_n(\mathbb{R}) \rightarrow \mathbb{R}$ be non-negative,
    and $\{\lambda_d\}$ be a sequence of real numbers indexed by squarefree
    integers $d \leq D$, with $\lambda_1 = 1$.  Then
    \begin{equation}\label{eq:generalizedselberg-monic}
        \sum_{\substack{f \in V^{\mathrm{mon}}_n(\mathbb{Z})
                        \\ \Gal(f) \subseteq A_n
                        \\ \Disc(f) \ne 0}}
        \frac{\phi(f/H)}{2^{\omega(\Disc(f))}}
        \leq
        \sum_{d_1,d_2} \lambda_{d_1} \lambda_{d_2}
        \sum_{f \in V^{\mathrm{mon}}_n(\mathbb{Z})} \phi(f/H) \prod_{p \mid [d_1,d_2]}
        \left( \frac{1 + (-1)^{n+1}\mu_{p}(f)}{2}\right).
    \end{equation}
\end{proposition}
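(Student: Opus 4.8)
The plan is to follow the proof of Proposition~\ref{prop:modified-selberg} essentially verbatim, using one extra elementary observation: if $f \in V_n^{\mathrm{mon}}(\mathbb{Z})$ then $f \bmod p$ is again monic of degree $n$ for every prime $p$, so there is no degree drop and $\mu_{p,n}(f) = \mu_p(f)$; moreover the leading coefficient of $f$ is $1$, so $\LDisc(f) = \Disc(f)$. This accounts for the appearance of $\mu_p$ and $\omega(\Disc(f))$ in \eqref{eq:generalizedselberg-monic} in place of $\mu_{p,n}$ and $\omega(\LDisc(f))$.

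Concretely, for each $f \in V_n^{\mathrm{mon}}(\mathbb{Z})$ I would introduce the quadratic form
\[
  Q_f(\{\lambda_d\}) = \sum_{d_1,d_2} \Bigl(\prod_{p \mid [d_1,d_2]} \nu_p(f)\Bigr)\lambda_{d_1}\lambda_{d_2}, \qquad \nu_p(f) := \frac{1+(-1)^{n+1}\mu_p(f)}{2},
\]
in the variables $\{\lambda_d : d \le D \text{ squarefree}\}$, so that the right-hand side of \eqref{eq:generalizedselberg-monic} equals $\sum_{f \in V_n^{\mathrm{mon}}(\mathbb{Z})} \phi(f/H)\,Q_f(\{\lambda_d\})$. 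Temporarily extending $Q_f$ to the variables indexed by \emph{all} squarefree $d$ whose prime factors are $\le D$, its Gram matrix factors as the tensor product $\bigotimes_{p \le D} M_p$ with $M_p = \bigl(\begin{smallmatrix} 1 & \nu_p(f)\\ \nu_p(f) & \nu_p(f)\end{smallmatrix}\bigr)$, exactly as in the non-monic case; each $M_p$ equals $\bigl(\begin{smallmatrix}1&1\\1&1\end{smallmatrix}\bigr)$, $\bigl(\begin{smallmatrix}1&0\\0&0\end{smallmatrix}\bigr)$, or $\bigl(\begin{smallmatrix}1&1/2\\1/2&1/2\end{smallmatrix}\bigr)$ according as $\mu_p(f)$ equals $(-1)^{n+1}$, $(-1)^n$, or $0$, all of which are non-negative definite, so $Q_f$ is non-negative definite (and specializing $\lambda_d = 0$ for $d > D$ preserves this).

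Next I would apply Lemma~\ref{lem:Anneverodd}: when $f \in V_n^{\mathrm{mon}}(\mathbb{Z})$ satisfies $\Gal(f) \subseteq A_n$ and $\Disc(f) \ne 0$, we get $\mu_p(f) = \mu_{p,n}(f) \ne (-1)^{n+1}$ for every prime $p$, so $M_p = \bigl(\begin{smallmatrix}1&0\\0&0\end{smallmatrix}\bigr)$ when $p \nmid \Disc(f)$ and $M_p = \bigl(\begin{smallmatrix}1&1/2\\1/2&1/2\end{smallmatrix}\bigr)$ when $p \mid \Disc(f)$. Since $\bigl(\begin{smallmatrix}1&1/2\\1/2&1/2\end{smallmatrix}\bigr) \ge \bigl(\begin{smallmatrix}1/2&0\\0&0\end{smallmatrix}\bigr) \ge 0$, and tensoring with a non-negative definite matrix preserves the order $\ge$, the Gram matrix of the extended $Q_f$ dominates the matrix with entry $2^{-\omega(\Disc(f))}$ in the slot indexed by $(d_1,d_2)=(1,1)$ and zeros elsewhere; as $\lambda_1 = 1$ this gives $Q_f(\{\lambda_d\}) \ge 2^{-\omega(\Disc(f))}$. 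Combining this with the non-negative definiteness of every $Q_f$ and the hypothesis $\phi \ge 0$ (which together let one discard the remaining terms),
\[
  \sum_{d_1,d_2}\lambda_{d_1}\lambda_{d_2}\sum_{f \in V_n^{\mathrm{mon}}(\mathbb{Z})}\phi(f/H)\prod_{p\mid[d_1,d_2]}\nu_p(f)
  = \sum_{f \in V_n^{\mathrm{mon}}(\mathbb{Z})}\phi(f/H)\,Q_f
  \ge \sum_{\substack{f \in V_n^{\mathrm{mon}}(\mathbb{Z}) \\ \Gal(f)\subseteq A_n \\ \Disc(f)\ne 0}} \frac{\phi(f/H)}{2^{\omega(\Disc(f))}},
\]
which is exactly \eqref{eq:generalizedselberg-monic}.

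I do not expect a genuine obstacle here: all the real content — non-negative definiteness of the three local matrices and the domination $\bigl(\begin{smallmatrix}1&1/2\\1/2&1/2\end{smallmatrix}\bigr) \ge \tfrac12\bigl(\begin{smallmatrix}1&0\\0&0\end{smallmatrix}\bigr)$ — is already present in the proof of Proposition~\ref{prop:modified-selberg}. The only point needing care is the bookkeeping that reduction mod $p$ never lowers the degree of a monic polynomial, which is precisely what licenses replacing $\mu_{p,n}$ by $\mu_p$ and $\LDisc$ by $\Disc$ throughout.
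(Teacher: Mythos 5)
Your proposal is correct and is exactly what the paper intends: the paper proves the monic version by remarking that ``a similar proof'' to Proposition~\ref{prop:modified-selberg} applies, and your adaptation (tensor product of the matrices $M_p$, non-negative definiteness, the lower bound $Q_f \geq 2^{-\omega(\Disc(f))}\lambda_1^2$ via Lemma~\ref{lem:Anneverodd}) is that same argument. The bookkeeping point you single out --- that a monic integer polynomial reduces mod $p$ to a monic polynomial of the same degree, so $\mu_{p,n}(f)=\mu_p(f)$ and $\LDisc(f)=\Disc(f)$ --- is precisely the reason the monic statement involves $\mu_p$ and $\omega(\Disc(f))$, and you have handled it correctly.
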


\section*{Proof of Theorem~\ref{thm:hit-intro}}

In the proof, we will use a slightly atypical form of Poisson summation. We first state this and
give its proof.

\begin{lemma}\label{lem:poisson_summation}
  Fix $d \geq 2$.
  Let $\phi \colon \mathbb{R}^n \longrightarrow \mathbb{C}$ be Schwartz, and let $\psi_d \colon
  (\mathbb{Z} / d \mathbb{Z})^n \longrightarrow \mathbb{C}$ be any function.
  Let $\widehat{\phi} \colon \mathbb{R}^n \longrightarrow \mathbb{C}$ and $\widehat{\psi}_d \colon
  (\mathbb{Z} / d \mathbb{Z})^n \longrightarrow \mathbb{C}$ denote the Fourier transforms
  \begin{equation*}
    \widehat{\phi}(\mathbf{u})
    =
    \int_{\mathbb{R}^n} e(\langle \mathbf{x}, \mathbf{u} \rangle) \phi(\mathbf{x}) d\mathbf{x}
    \qquad \text{and} \qquad
    \widehat{\psi}_d(\mathbf{u})
    =
    \frac{1}{d^n} \sum_{\mathbf{x} \in (\mathbb{Z} / d \mathbb{Z})^n}
    e_d(\langle \mathbf{x}, \mathbf{u} \rangle) \psi_d(\mathbf{x}),
  \end{equation*}
  where $e(x) = e^{- 2 \pi i x}$ and $e_d(x) = e^{2 \pi i x / d}$. Then
  \begin{equation*}
    \sum_{\mathbf{x} \in \mathbb{Z}^n} \phi(\mathbf{x}) \psi_d(\mathbf{x})
    =
    \sum_{\mathbf{u} \in \mathbb{Z}^n}
    \widehat{\phi}\big( \frac{\mathbf{u}}{d} \big) \widehat{\psi}_d(\mathbf{u}).
  \end{equation*}
\end{lemma}

\begin{proof}
  Each $\mathbf{x} \in \mathbb{Z}^n$ can be written uniquely as $\mathbf{x} = d \mathbf{y} +
  \mathbf{z}$ where $\mathbf{y} \in \mathbb{Z}^n$ and $\mathbf{z} = (z_1, \ldots, z_n)$ satisfies $0
  \leq z_i \leq d - 1$. Then using the fact that $\psi_d$ is $(\mathbb{Z} / d \mathbb{Z})^n$-periodic,
  \begin{align*}
    \sum_{\mathbf{x} \in \mathbb{Z}^n} \phi(\mathbf{x}) \psi_d(\mathbf{x})
    &=
    \sum_{\substack{\mathbf{z} \in \mathbb{Z}^n \\ 0 \leq z_i \leq d-1}}
    \Big(%
      \sum_{\mathbf{y} \in \mathbb{Z}^n}
      \phi(d\mathbf{y} + \mathbf{z})
    \Big)
    \psi_d(\mathbf{z})
    =
    \sum_{\substack{\mathbf{z} \in \mathbb{Z}^n \\ 0 \leq z_i \leq d-1}}
    \frac{1}{d^n} \sum_{\mathbf{u} \in \mathbb{Z}^n}
    e_d(\langle \mathbf{z}, \mathbf{u} \rangle) \widehat{\phi}\big( \frac{\mathbf{u}}{d} \big)
    \psi_d(\mathbf{z})
    \\
    &=
    \sum_{\mathbf{u} \in \mathbb{Z}^n}
    \widehat{\phi}\big( \frac{\mathbf{u}}{d} \big)
    \Big(%
      \frac{1}{d^n}
      \sum_{\substack{\mathbf{z} \in \mathbb{Z}^n \\ 0 \leq z_i \leq d-1}}
      e_d(\langle \mathbf{z}, \mathbf{u} \rangle)
      \psi_d(\mathbf{z})
    \Big)
    =
    \sum_{\mathbf{u} \in \mathbb{Z}^n}
    \widehat{\phi}\big( \frac{\mathbf{u}}{d} \big)
    \widehat{\psi}_d(\mathbf{u}).
  \end{align*}
  The second equality uses classical Poisson summation on $\mathbb{Z}^n$, and the last equality
  follows from the definition of $\widehat{\psi}_d$ and embedding $\mathbf{z}$ into
  $(\mathbb{Z}/d\mathbb{Z})^n$.
\end{proof}

Next, we use Lemma~\ref{lem:poisson_summation} and Proposition~\ref{prop:modified-selberg} to prove
Theorem~\ref{thm:hit-intro}.

\begin{theorem}
    Let $n \geq 3$ be an integer and let $H \geq 2$ be real. Define $V_n(\mathbb{Z}; H)$ to be
    the set of polynomials $f = \sum a_i x^i$ in $V_n(\mathbb{Z})$ with $\max \, \lvert a_i
    \rvert \leq H$. Define $V_n^{\mathrm{mon}}(\mathbb{Z}; H)$ similarly. Then
        \[
            \sum_{\substack{f \in V_n(\mathbb{Z};H)
                            \\ \Gal(f) \subseteq A_n
                            \\ \LDisc(f) \ne 0}}
            \frac{1}{2^{\omega(\LDisc(f))}}
            \ll_n
            H^{n+\frac{1}{3}+\frac{8}{9n+21}} (\log H)^{\frac{4}{3n+7}}
        \]
    and
        \[
            \sum_{\substack{f \in V_n^\mathrm{mon}(\mathbb{Z};H)
                            \\ \Gal(f) \subseteq A_n
                            \\ \Disc(f) \ne 0}}
            \frac{1}{2^{\omega(\Disc(f))}}
            \ll_n
            H^{n - \frac{2}{3} + \frac{2}{3n+3}} (\log H)^{\frac{4}{3n+3}}.
        \]
\end{theorem}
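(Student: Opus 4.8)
The plan is to feed a smooth majorant into the modified Selberg sieve of Proposition~\ref{prop:modified-selberg}, evaluate the resulting local sums by Poisson summation and Corollary~\ref{cor:smooth-mobius}, and then optimize the sieve weights $\lambda_d$ together with the level $D$. To set up, fix once and for all a non-negative Schwartz function $\phi\colon\mathbb{R}^{n+1}\to\mathbb{R}$ with $\phi\geq 1$ on $[-1,1]^{n+1}$ (a smooth bump supported in $[-2,2]^{n+1}$), so that $c_\phi:=\widehat\phi(\mathbf{0})=\int_{\mathbb{R}^{n+1}}\phi\asymp_n 1$. Since every $f\in V_n(\mathbb{Z};H)$ satisfies $\phi(f/H)\geq 1$, the quantity to be bounded is at most $\sum_{f\in V_n(\mathbb{Z}),\,\Gal(f)\subseteq A_n,\,\LDisc(f)\neq 0}\phi(f/H)\,2^{-\omega(\LDisc(f))}$, and applying Proposition~\ref{prop:modified-selberg} with a sequence $\{\lambda_d\}$ supported on squarefree $d\leq D$ and $\lambda_1=1$, it suffices to bound
\[
  \sum_{d_1,d_2\leq D}\lambda_{d_1}\lambda_{d_2}\sum_{f\in V_n(\mathbb{Z})}\phi(f/H)\,\psi_e(f),\qquad e=[d_1,d_2],\quad \psi_e(f):=\prod_{p\mid e}\frac{1+(-1)^{n+1}\mu_{p,n}(f)}{2}.
\]

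The second step evaluates the local sum for each fixed $e$. Since $\psi_e$ depends only on the reduction of the coefficient vector of $f$ modulo $e$, expanding $\psi_e$ in additive characters mod $e$ and applying Poisson summation on $\mathbb{Z}^{n+1}$ turns the inner sum into $H^{n+1}\sum_{\mathbf{v}\in\mathbb{Z}^{n+1}}\widehat\psi_e(\mathbf{v})\,\widehat\phi(H\mathbf{v}/e)$, with $\widehat\psi_e$ the finite Fourier transform of Section~\ref{FTsec}. This is exactly the sum estimated by Corollary~\ref{cor:smooth-mobius}, applied with the Schwartz function $\widehat\phi$ and parameters $X=e/H$ and $d=e$, and we obtain
\[
  \sum_{f\in V_n(\mathbb{Z})}\phi(f/H)\,\psi_e(f)=\frac{c_\phi\,H^{n+1}}{2^{\omega(e)}}+O_n\!\left(e^{\frac{3n+5}{4}}\right),
\]
the error being independent of $H$ because the factor $H^{n+1}$ from Poisson cancels the factor $(e/H)^{n+1}$ produced by the corollary.

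Inserting this into the double sum, the main term contributes $c_\phi H^{n+1}\sum_{d_1,d_2\leq D}\lambda_{d_1}\lambda_{d_2}\,2^{-\omega([d_1,d_2])}$, the classical Selberg quadratic form for the multiplicative function $2^{\omega(\cdot)}$ (sieve density $1/2$ at every prime). Using $2^{-\omega([d_1,d_2])}=2^{\omega((d_1,d_2))}2^{-\omega(d_1)}2^{-\omega(d_2)}$ and $2^{\omega(m)}=\#\{k\mid m\}$, this diagonalizes as $\sum_{k\leq D}y_k^2$ with $y_k=\sum_{k\mid d\leq D}\lambda_d 2^{-\omega(d)}$ subject to $\sum_{k\leq D}\mu(k)y_k=\lambda_1=1$; by Cauchy--Schwarz the minimum is $1/Q(D)$ with $Q(D):=\#\{k\leq D:\ k\text{ squarefree}\}\asymp D$, attained at $y_k=\mu(k)/Q(D)$, for which one computes $|\lambda_d|\ll 2^{\omega(d)}/d$. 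The error term contributes $O\big(\sum_{d_1,d_2\leq D}|\lambda_{d_1}\lambda_{d_2}|\,[d_1,d_2]^{(3n+5)/4}\big)$; writing $[d_1,d_2]=d_1d_2/(d_1,d_2)$, parametrizing by $g=(d_1,d_2)$, and using $\sum_{m\leq Y}2^{\omega(m)}m^{\theta-1}\ll Y^\theta\log Y$ with $\theta=(3n+5)/4>1$ together with convergence of $\sum_g 4^{\omega(g)}g^{-\theta-2}$, this is $O_n\big(D^{(3n+5)/2}(\log D)^2\big)$. Hence
\[
  \sum_{\substack{f\in V_n(\mathbb{Z};H)\\ \Gal(f)\subseteq A_n\\ \LDisc(f)\neq 0}}\frac{1}{2^{\omega(\LDisc(f))}}\ll_n\frac{H^{n+1}}{D}+D^{\frac{3n+5}{2}}(\log D)^2.
\]
Taking $D=H^{2(n+1)/(3n+7)}(\log H)^{-4/(3n+7)}$ balances the two terms, and since $(n+1)\tfrac{3n+5}{3n+7}=n+\tfrac13+\tfrac{8}{9n+21}$ this gives the first bound. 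The monic case runs in exact parallel: one works on $\mathbb{Z}^n$ with $\Disc$ in place of $\LDisc$, uses Proposition~\ref{prop:modified-selberg-monic} and the monic half of Corollary~\ref{cor:smooth-mobius}, so the per-$e$ error becomes $O_n(e^{(3n+1)/4})$, the total error $O_n(D^{(3n+1)/2}(\log D)^2)$, and balancing against $H^n/D$ with $D=H^{2n/(3n+3)}(\log H)^{-4/(3n+3)}$ while using $n\tfrac{3n+1}{3n+3}=n-\tfrac23+\tfrac{2}{3n+3}$ yields the second bound.

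The main obstacle is the second step: one must carry out Poisson summation with the right normalization so that the error is genuinely $H$-independent, and check that the leading-coefficient degeneracies recorded by $\mu_{p,n}$ cause no trouble — this is precisely where the gain over Gallagher's large sieve comes from, since the right-hand side of the modified sieve retains the strong Fourier decay $\widehat\psi_p(\mathbf{u})\ll_n p^{(1-n)/4}$ of Lemma~\ref{lem:fourier-mobius}, whereas the characteristic function of odd polynomials does not. The remaining steps are routine Selberg-sieve bookkeeping, except that obtaining the stated exponent requires the sharper bound $|\lambda_d|\ll 2^{\omega(d)}/d$ rather than merely $|\lambda_d|\ll 1$.
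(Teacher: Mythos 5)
Your proposal is correct and follows essentially the same route as the paper's own proof: a Schwartz majorant fed into Proposition~\ref{prop:modified-selberg}, Poisson summation evaluated via Corollary~\ref{cor:smooth-mobius} with the $H$-independent per-modulus error $e^{(3n+5)/4}$ (resp.\ $e^{(3n+1)/4}$), the standard Selberg diagonalization of the $2^{-\omega([d_1,d_2])}$ form with the choice $y_k=\mu(k)/Q(D)$ and the bound $\lvert\lambda_d\rvert\ll 2^{\omega(d)}/d$, and the same optimization of $D$. The only differences are notational (the paper writes $2^{\omega}$ as $\tau$ on squarefree integers and phrases the choice of weights as ``proportional to $\mu$'' rather than as the Cauchy--Schwarz minimizer), so there is nothing substantive to add.
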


\begin{proof}
    Choose a Schwartz function $\phi \colon V_n(\mathbb{R}) \rightarrow \mathbb{R}$ that is
    greater than or equal to $1$ on polynomials whose coefficients lie in $[-1, 1]$. For
    $f \in V_n(\mathbb{Z})$, we let
        \[
            \psi_d(f) = \prod_{p \mid d} \left( \frac{1 + (-1)^{n+1} \mu_{p,n}(f)}{2}\right).
        \]
    We apply Proposition~\ref{prop:modified-selberg}.
    The sum over $f$ on the right-hand side of~\eqref{eq:generalizedselberg} can be
    written as
        \[
            \sum_{f \in V_n(\mathbb{Z})} \phi(f/H) \psi_{[d_1,d_2]}(f).
        \]
    To apply Poisson summation as in Lemma~\ref{lem:poisson_summation}, we
    identify $V_n(\mathbb{Z})$ with $\mathbb{Z}^{n+1}$, write $f = \mathbf{x} \in
    \mathbb{Z}^{n+1}$, and define $\Phi(\mathbf{x}) := \phi(\mathbf{x}/H)$. Applying
    Lemma~\ref{lem:poisson_summation} to $\sum_{\mathbf{x}} \Phi(\mathbf{x}) \psi_{[d_1,
    d_2]}(\mathbf{x})$ then gives that
        \[
            \sum_{f \in V_n(\mathbb{Z})} \phi(f/H) \psi_{[d_1,d_2]}(f)
            =
            H^{n+1} \sum_{\mathbf{u} \in \mathbb{Z}^{n+1}} \widehat \phi
            \left( \frac{\mathbf{u} H}{[d_1,d_2]}\right)
            \widehat \psi_{[d_1,d_2]} (\mathbf{u}).
        \]
    By Corollary~\ref{cor:smooth-mobius}, the right-hand side is equal to
        \[
            \frac{H^{n+1} \widehat\phi(\mathbf{0})}
                 {2^{\omega([d_1,d_2])}}
            +
            O_{\phi,n}([d_1,d_2]^{\frac{3n+5}{4}}).
        \]
    As $d_1$ and $d_2$ are squarefree, one can check that
    $2^{\omega([d_1,d_2])} = \tau(d_1)\tau(d_2)/\tau((d_1,d_2))$, where $\tau$ is the
    divisor function. Substituting this into the full expression from
    Proposition~\ref{prop:modified-selberg}, we obtain
        \begin{equation}\label{eqn:sieve-applied}
            \sum_{\substack{f \in V_n(\mathbb{Z};H)
                            \\ \Gal(f) \subseteq A_n
                            \\ \LDisc(f) \ne 0}}
            \frac{1}{2^{\omega(\LDisc)(f)}}
            \leq
            H^{n+1} \widehat\phi(0) \sum_{d_1,d_2}
            \frac{\lambda_{d_1} \lambda_{d_2}}{\tau(d_1)\tau(d_2)} \tau\big((d_1,d_2)\big)
            +
            O_{\phi,n}\Big(%
              \sum_{d_1,d_2} \lvert \lambda_{d_1} \lambda_{d_2} \rvert [d_1,d_2]^{\frac{3n+5}{4}}
            \Big).
        \end{equation}
    As in the classical Selberg sieve, we diagonalize the quadratic form appearing in the
    first term to obtain
        \begin{align*}
            \sum_{d_1,d_2}
            \frac{\lambda_{d_1} \lambda_{d_2}}{\tau(d_1)\tau(d_2)} \tau\big((d_1,d_2)\big)
            &= \sum_{d_1,d_2} \frac{\lambda_{d_1} \lambda_{d_2}}{\tau(d_1)\tau(d_2)}
               \sum_{e \mid (d_1,d_2)} 1
            \\
            &= \sum_{e} \left(\sum_{d \equiv 0 \Mod{e}} \frac{\lambda_d}{\tau(d)}\right)^2
            \\
            &=: \sum_e \xi_e^2,
        \end{align*}
    say, where the $\xi_e$ are again supported on squarefree integers $e \leq D$. A
    M\"obius inversion argument shows that
        \begin{equation}\label{eq:mobius_inversion}
            \lambda_d = \mu(d)\tau(d) \sum_{e \equiv 0 \Mod{d}} \mu(e) \xi_e.
        \end{equation}
    Thus the constraint that $\lambda_1 = 1$ becomes the condition
        \[
            \sum_{e} \mu(e) \xi_e = 1.
        \]
    This prompts us to choose $\xi_e$ proportional to $\mu(e)$,
        \[
            \xi_e = \frac{\mu(e)}{C}, \quad C := \sum_{e \leq D} \mu(e)^2,
        \]
    so that
        \[
            \sum_e \xi_e^2
                = 1/C
                \ll 1/D.
        \]
    The first term in~\eqref{eqn:sieve-applied} is thus
        \[
            O_{\phi, n} \big(\frac{H^{n+1}}{D}\big).
        \]
    To understand the second term, we note that the choice $\xi_e = \mu(e)/C$
    in~\eqref{eq:mobius_inversion} shows that the terms $\lambda_d$ satisfy
        \[
            \lvert \lambda_d \rvert
            \leq
            \tau(d) \sum_{\substack{e \leq D \\ e \equiv 0 \Mod{d}}} \frac{\mu(e)^2}{C}
            \ll
            \frac{\tau(d)}{d}.
        \]
    Therefore, the second term in~\eqref{eqn:sieve-applied} is
        \[
            \ll_{\phi,n}
            \sum_{d_1,d_2 \leq D} \frac{\tau(d_1)\tau(d_2)[d_1,d_2]^{\frac{3n+5}{4}}}{d_1d_2}
            \ll_{\phi,n}
            \Big(%
              \sum_{d \leq D} \tau(d) d^{\frac{3n+1}{4}}
            \Big)^2
            \ll_{\phi,n}
            D^{\frac{3n+5}{2}} (\log D)^2,
        \]
    where the final bound follows from the (crude) estimate on the Dirichlet
    divisor problem, $\sum_{n \leq X} \tau(n) = O(X \log X)$. Combining these two bounds, we find that
        \[
            \sum_{\substack{f \in V_n(\mathbb{Z};H)
                            \\ \Gal(f) \subseteq A_n
                            \\ \LDisc(f) \ne 0}}
            \frac{1}{2^{\omega(\LDisc)(f)}}
            \ll_{\phi,n}
            \frac{H^{n+1}}{D} + D^{\frac{3n+5}{2}} (\log D)^2.
        \]
    This is optimized by choosing $D = H^{\frac{2n+2}{3n+7}}(\log H)^{\frac{-4}{3n+7}}$,
    which yields the first claim.

    For the monic case, an analogous proof using
    Proposition~\ref{prop:modified-selberg-monic} instead of
    Proposition~\ref{prop:modified-selberg} shows that
        \[
            \sum_{\substack{f \in V_n^{\mathrm{mon}}(\mathbb{Z};H)
                            \\ \Gal(f) \subseteq A_n
                            \\ \Disc(f) \ne 0}}
            \frac{1}{2^{\omega(\Disc(f))}}
            \ll_{\phi,n} \frac{H^n}{D} + D^{\frac{3n+1}{2}} (\log D)^2.
        \]
    Choosing $D = H^{\frac{2n}{3n+3}} (\log H)^{\frac{-4}{3n+3}}$ gives the second claim.
\end{proof}

\section{Almost prime discriminants}\label{sec:almost_prime}

In this section, we apply a weighted almost prime sieve as in~\cite[\S25]{friedlander2010opera}
to obtain lower bounds on almost prime values of polynomial discriminants, in a manner in
spirit with the earlier sections of this paper.
Specifically, we prove

\begin{theorem}\label{thm:almost-prime-polynomial}
    Let $n \geq 3$, and let $H \geq 2$.  For any $r \geq 2n-3$, we have
        \[
            \#\{ f \in V_n^\mathrm{mon}(\mathbb{Z}) : \mathrm{ht}(f) \leq H, \omega(\Disc(f)) \leq r\}
                \gg_{n,r} \frac{H^n}{\log H},
        \]
    where $\omega(\Disc(f))$ denotes the number of distinct primes dividing the
    discriminant of the polynomial $f$.
\end{theorem}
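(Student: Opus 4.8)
The plan is to apply a weighted almost-prime sieve, in the spirit of \cite[\S25]{friedlander2010opera}, to the sequence $\mathcal A$ consisting of the discriminants $|\Disc(f)|$ with $f$ ranging over monic polynomials of degree $n$ and height at most $H$, sifted by the primes. Three structural facts set this up. First, $|\mathcal A|\asymp_n H^n$, since the $f$ with $\Disc(f)=0$ (those with a repeated factor) number only $O_n(H^{n-1})$, and, as every prime divides $0$, such $f$ receive zero weight in the sieve and play no role. Second, $\Disc$ is a homogeneous polynomial of degree $2n-2$ in the $n+1$ coefficients (visible from $\Disc(f)=\pm\mathrm{Res}(f,f')$), so $|\Disc(f)|\ll_n H^{2n-2}$ for monic $f$ with $\Ht(f)\le H$. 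Third, by Lemma~\ref{lem:poly-disc} the condition $p\mid\Disc(f)$ is exactly that $f\bmod p$ is not squarefree, which holds for a proportion $1/p$ of the monic degree-$n$ polynomials over $\mathbb{F}_p$; hence the sieve has dimension $1$, with no fixed prime divisor.

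The crux is the level of distribution: for squarefree $d$ one must show $|\mathcal A_d|=\tfrac1d|\mathcal A|+r_d$ with $\sum_{d\le D}|r_d|\ll_{n,A}H^n/(\log H)^A$ for a suitable power $D=H^\theta$. I would run the Poisson-summation scheme of \S\ref{FTsec} with the local weights $\psi_p(f)=1-\mu_p^2(f)=1_{p\mid\Disc(f)}$, so that (after a smooth approximation to the box) $|\mathcal A_d|=H^n\sum_{\mathbf v}\widehat\phi(\mathbf vH/d)\,\widehat\psi_d(\mathbf v)$. By Lemma~\ref{lem:fourier-multiplicative}, $\widehat\psi_d$ factors over the primes, the term $\mathbf v=\mathbf 0$ gives $\widehat\psi_d(\mathbf 0)=\prod_{p\mid d}(1/p)=1/d$ (the main term), and the rest is governed by a bound $\widehat\psi_p(\mathbf u)\ll_n p^{-\beta}$ for $\mathbf u\not\equiv\mathbf 0\pmod p$. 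Here $\widehat\psi_p(\mathbf u)=-\widehat{\mu_p^2}(\mathbf u)$, the Fourier transform of the squarefree indicator on $V_n^{\mathrm{mon}}(\mathbb{F}_p)$; inserting the identity $\mu_p^2(f)=\sum_{b^2\mid f}\mu_p(b)$ and evaluating the resulting linear character sums over the monic complements $f/b^2$ gives $\widehat{\mu_p^2}(\mathbf u)\ll_n p^{-2}$ for $\mathbf u\ne\mathbf 0$ --- the relevant linear form is nonconstant for all but $O_n(1)$ of the degree-one factors $b$, and identically nonconstant for $b$ of larger degree. Feeding $\beta=2$ into the (monic) analogues of Lemmas~\ref{lem:sharp-fourier-sum}--\ref{lem:smooth-fourier-sum}, including the directions $\mathbf u$ sharing a prime with $d$, gives $r_d\ll_n d^{\,n-2+\epsilon}$, whence $\sum_{d\le D}|r_d|\ll D^{\,n-1+\epsilon}$; this is admissible as soon as $D=H^{\frac{n}{n-1}-\epsilon}$.

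With this level of distribution, I would invoke the weighted linear sieve of \cite[\S25]{friedlander2010opera}. Since $D=H^{\theta}$ with $\theta=\tfrac{n}{n-1}-\epsilon$ and $|\Disc(f)|\ll H^{2n-2}=D^{\gamma}$, where
\[
  \gamma \;=\; \frac{2n-2}{\theta} \;=\; \frac{2(n-1)^2}{n}+O(\epsilon) \;=\; (2n-4)+\tfrac2n+O(\epsilon),
\]
and $0<\tfrac2n<1$ for $n\ge 3$, this $\gamma$ lies (for $\epsilon$ small) strictly between the consecutive integers $2n-4$ and $2n-3$. The weighted sieve then produces $\gg_n H^n/\log H$ monic $f$ with $\Ht(f)\le H$ and $\Omega(\Disc(f))\le\lceil\gamma\rceil=2n-3$; because $\gamma$ is not an integer this is the non-critical regime, so no Chen-type switching is needed, and the construction uses only the slack $2n-3-\gamma=1-\tfrac2n>0$. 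As $\omega(\Disc(f))\le\Omega(\Disc(f))$, these $f$ satisfy $\omega(\Disc(f))\le 2n-3\le r$, and monotonicity in $r$ gives the theorem.

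The main obstacle, I expect, is the level-of-distribution step: proving the $p^{-2}$ decay of $\widehat{\mu_p^2}$ with full uniformity in $p$ and in the direction $\mathbf u$, correctly accounting for the non-primitive directions as in Lemma~\ref{lem:sharp-fourier-sum}, and passing cleanly between the smooth cutoff needed for Poisson summation and the sharp height constraint in the statement. A secondary point is checking that the weighted-sieve bookkeeping lands on $\lceil\gamma\rceil=2n-3$ rather than a larger almost-prime index --- which is precisely where the numerical gap $1-\tfrac2n>0$, valid for every $n\ge 3$, is used.
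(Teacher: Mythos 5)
Your setup coincides with the paper's: the same sequence (discriminants of monic polynomials weighted by a smooth cutoff), the same Poisson-summation computation of the level of distribution using the non-squarefree indicator $\psi_p(f)=1-\mu_p^2(f)$, the same Fourier bound $\widehat\psi_p^{\mathrm{mon}}(\mathbf v)\ll p^{-2}$ proved by the identity $\mu_p^2(f)=\sum_{b^2\mid f}\mu_p(b)$ and the adjoint multiplication maps (this is exactly Lemma~\ref{lem:squarefree-fourier}), the same remainder bound $\sum_{d\le D}|R_d|\ll D^{n-1}$ and hence the same level $D\approx H^{n/(n-1)}$, and the same source for the sieve, \cite[\S25]{friedlander2010opera}. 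Up to that point your proposal is essentially the paper's proof.

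The genuine gap is in the final sieve step. You assert that the weighted linear sieve yields $\Omega(\Disc(f))\le\lceil\gamma\rceil$ whenever $\gamma$ is not an integer, ``using only the slack $2n-3-\gamma>0$.'' That is not what any form of the weighted sieve gives: the threshold is not $r>\gamma$ but $r>\gamma+c$ for a definite positive constant $c$. In the version the paper uses (Proposition~\ref{prop:almost_prime_sieve}, i.e.\ \cite[Theorem 25.1]{friedlander2010opera}), the requirement is $\Delta_r\ge\gamma$ with $\Delta_r=r+\frac{1}{\log 3}\log\bigl(\tfrac34(1+3^{-r})\bigr)\approx r-0.26$, so one needs the slack $2n-3-\gamma=1-\tfrac2n$ to exceed roughly $0.27$, not merely to be positive; the classical Richert-type weighted sieve would demand even more. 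If your principle were correct as stated it would, for instance, give $P_2$ for any sequence with $\gamma$ slightly below $2$, well beyond what general sieve theorems provide. The result survives because $1-\tfrac2n\ge\tfrac13>0.27$ for all $n\ge 3$ (for $n=3$ this is a near thing: $\gamma=8/3$ versus the requirement $\Delta_3\approx 2.77$), and this quantitative check---which the paper carries out via the inequality $\Delta_r>\frac{2(n-1)^2}{n}$---is precisely what is missing from your argument. With that verification inserted in place of the ``non-integer $\gamma$'' heuristic, your proof is complete and matches the paper's.
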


Since the discriminant of a field cut out by an irreducible polynomial divides that of the
polynomial, this also yields lower bounds for the number of degree $n$ number fields with
almost prime discriminant.

\begin{theorem}\label{thm:almost-prime-fields}
    Let $n \geq 3$, and let $X \geq 2$.  For any $r \geq 2n-3$, we have
        \[
            \#\{ F/\mathbb{Q} : [F:\mathbb{Q}]=n, \Disc(F) \leq X, \omega(\Disc(F)) \leq r\}
                \gg_{n,r} \frac{X^{\frac{1}{2}}}{(\log X)^n},
        \]
    where $\omega(\Disc(F))$ denotes the number of distinct primes dividing the
    discriminant of the polynomial $F$.  Moreover, if $c_n \geq 1$ is any constant for which
        \[
            \#\{ F/\mathbb{Q} : [F:\mathbb{Q}]=n, \Gal(\widetilde{F}/\mathbb{Q}) \simeq S_n, \Disc(F) \leq X\}
                \ll_n X^{c_n},
        \]
    then we additionally have
        \[
            \#\{ F/\mathbb{Q} : [F:\mathbb{Q}]=n, \Disc(F) \leq X, \omega(\Disc(F)) \leq r\}
                \gg_{n,r,\epsilon} X^{\frac{1}{2} + \frac{1}{2c_n n (n-1) -2} - \epsilon}.
        \]
\end{theorem}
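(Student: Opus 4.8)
The plan is to deduce both assertions from Theorem \ref{thm:almost-prime-polynomial} by pushing the polynomial count forward to a field count, using the fact that if $f \in V_n^{\mathrm{mon}}(\mathbb{Z})$ is irreducible with root field $F = \mathbb{Q}(\theta)$, then $\Disc(F) \mid \Disc(f)$, so that $\omega(\Disc(F)) \leq \omega(\Disc(f)) \leq r$ automatically. First I would observe that $\mathrm{ht}(f) \leq H$ forces $|\Disc(f)| \ll_n H^{2n-2}$, hence $|\Disc(F)| \ll_n H^{2n-2}$; setting $X \asymp_n H^{2n-2}$, i.e. $H \asymp_n X^{1/(2n-2)}$, the polynomial count of Theorem \ref{thm:almost-prime-polynomial} is $\gg_{n,r} H^n/\log H \asymp_n X^{n/(2n-2)}/\log X$. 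The subtlety is that many of these $\gg H^n/\log H$ polynomials could be reducible, or could cut out the same number field, or could be associated to fields of much smaller discriminant than $X$; I must rule out enough of this degeneracy to retain a lower bound on \emph{distinct} fields.

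The key step is a fibre-counting bound: I would invoke the results of \cite{LOTPolys} (cited in the introduction) bounding the number of monic integer polynomials of height at most $H$ that cut out a fixed number field $F$ — this is $\ll_{n,\epsilon} H^\epsilon$, or at any rate $o(H^{\text{any positive power}})$. Combined with the standard fact that the number of reducible $f \in V_n^{\mathrm{mon}}(\mathbb{Z})$ with $\mathrm{ht}(f) \le H$ is $O_n(H^{n-1})$ (negligible compared to $H^n/\log H$), this shows that the $\gg_{n,r} H^n/\log H$ polynomials from Theorem \ref{thm:almost-prime-polynomial} give rise to $\gg_{n,r} H^n/(H^\epsilon \log H) \gg_{n,r,\epsilon} H^{n-\epsilon}$ distinct number fields $F$, each with $[F:\mathbb{Q}]=n$, $\omega(\Disc(F)) \le r$, and $\Disc(F) \le X$ for $X \asymp_n H^{2n-2}$. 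Re-expressing in terms of $X$ gives $\gg_{n,r,\epsilon} X^{\frac{n}{2n-2} - \epsilon} = X^{\frac12 + \frac{1}{2n-2} - \epsilon}$ fields, which already beats $X^{1/2}$; the stated first bound $\gg X^{1/2}/\log^n X$ is the same estimate made slightly cleaner by using the sharper polynomial-fibre bound from \cite{LOTPolys} (which saves only logarithmic factors rather than $H^\epsilon$), at the cost of $n$ logs.

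For the improved bound under the hypothesis $\#\{F : \Gal(\widetilde F/\mathbb{Q}) \simeq S_n, \Disc(F) \le X\} \ll_n X^{c_n}$, I would argue that almost all of the fields produced are $S_n$-fields: by quantitative Hilbert irreducibility (Theorem \ref{thm:hit-intro}, or even just the classical van der Waerden bound), the number of $f \in V_n^{\mathrm{mon}}(\mathbb{Z})$ with $\mathrm{ht}(f)\le H$ and $\Gal(f) \ne S_n$ is $o(H^n/\log H)$, so the $S_n$-polynomials still number $\gg_{n,r} H^n/\log H$. These cut out $S_n$-fields, and I can now play the fibre count the other way: there are $\ll_n X^{c_n}$ such fields with $\Disc(F) \le X$, and for each such $F$ the number of monic $f$ of height $\le H$ with root field $F$ — with $X$ now no longer tied to $H$ — is $\ll_{n} H^{n}/|\Disc(F)|^{1/(n(n-1))}$ or a similar geometry-of-numbers bound reflecting that the relevant lattice of algebraic integers has covolume a power of $|\Disc(F)|$; summing this over the $\ll X^{c_n}$ fields and comparing with the lower bound $\gg H^n/\log H$ forces many fields to have $|\Disc(F)|$ close to its maximal size, and optimizing the choice of $H$ against $X$ (balancing $H^n/\log H$ against $X^{c_n} \cdot (\text{fibre size})$) yields the exponent $\frac12 + \frac{1}{2c_n n(n-1) - 2} - \epsilon$. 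The main obstacle is making this last fibre/covolume estimate precise and uniform in $F$ — controlling how many height-$\le H$ monic generators a degree-$n$ field has in terms of its discriminant — and ensuring the optimization genuinely produces the claimed exponent; this is exactly where the input from \cite{LOTPolys} and a careful geometry-of-numbers argument are needed, and I would expect to cite \cite{LOTPolys} in the precise quantitative form rather than reprove it here.
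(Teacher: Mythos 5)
Your overall strategy is the paper's: push the polynomial count of Theorem \ref{thm:almost-prime-polynomial} forward to fields using Hilbert irreducibility and the multiplicity bounds of \cite{LOTPolys}, and, for the conditional bound, use the hypothesis $\ll X^{c_n}$ to show most polynomials cut out fields of large discriminant before dividing by a discriminant-dependent fibre bound. But there is a genuine error in your first step. The number of monic $f$ of height at most $H$ cutting out a fixed degree $n$ field $F$ is \emph{not} $\ll_{n,\epsilon} H^{\epsilon}$: it is of order $H$ up to logarithms. Indeed, already the translates $x \mapsto x+a$ of the minimal polynomial of a fixed generator $\theta$ give $\gg_F H^{1/n}$ distinct height-$\le H$ generators, and counting all $\theta \in \mathcal{O}_F$ whose conjugates are $\ll H^{1/n}$ gives $\asymp_F H$ of them. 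The correct input (the paper's Lemma \ref{lem:field-multiplicity}, quoting \cite{LOTPolys}) is $M_F(H) \ll_n H(\log H)^{n-1}\Disc(F)^{-1/(n^2-n)}$, so dividing $H^n/\log H$ by $H(\log H)^{n-1}$ yields $\gg H^{n-1}/(\log H)^n$ fields, i.e.\ exactly $X^{1/2}/\log^n X$ after setting $H \asymp X^{1/(2n-2)}$ --- and nothing better. Your claimed unconditional bound $X^{\frac12+\frac{1}{2n-2}-\epsilon}$ cannot be right as derived; note it would even exceed the theorem's conditional bound with the conjecturally optimal $c_n=1$, since $\frac{1}{2n-2} > \frac{1}{2n(n-1)-2}$.

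The second part of your argument is directionally correct but the quantitative core is missing and the fibre bound you quote, $\ll_n H^{n}/|\Disc(F)|^{1/(n(n-1))}$, has the wrong power of $H$: with $H^n$ there the comparison against the total polynomial count $\asymp H^n$ yields nothing. What is actually needed is the bound $M_F(H) \ll_n H(\log H)^{n-1}\Disc(F)^{-1/(n^2-n)}$, summed by partial summation over the $\ll Y^{c_n}$ fields of discriminant at most $Y$ to get $\ll_n H(\log H)^{n-1}Y^{c_n - \frac{1}{n^2-n}}$; one then chooses $Y \asymp H^{\frac{n(n-1)^2}{c_n n(n-1)-1}-\epsilon}$ so that this total is $\ll H^{n-\epsilon}$, smaller than the $\gg H^n/\log H$ polynomials produced, concluding that almost all of them cut out $S_n$-fields with $\Disc(F) \geq Y$. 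Dividing by the fibre bound $H(\log H)^{n-1}Y^{-1/(n^2-n)}$ and taking $H \asymp X^{1/(2n-2)}$ gives the exponent $\frac12 + \frac{1}{2c_n n(n-1)-2} - \epsilon$. As written, your proposal neither fixes the threshold $Y$ nor carries out this optimization, so the claimed exponent is not established.
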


\begin{remark}
    It is expected that the choice $c_n = 1$ is admissible for every $n$ in
    Theorem~\ref{thm:almost-prime-fields}, but this known only for $n \leq 5$. For $n \geq 6$,
    the smallest known admissible constants are due to Schmidt~\cite{Schmidt} and Lemke Oliver and
    Thorne~\cite{LOTFields}.  It follows from these that the choices $c_n = \frac{n+2}{4}$ and
    $c_n = 1.6 (\log n)^2$ are admissible for every $n \geq 6$, for example.
\end{remark}

In preparation to apply the almost prime sieve, we recall from Lemma~\ref{lem:poly-disc} that given
a monic polynomial $f(x) \in \mathbb{Z}[x]$, a prime $p$ divides the discriminant of $f$ if and only
if $f \Mod{p}$ is not squarefree.

\begin{lemma}\label{lem:squarefree-fourier}
    Let $n \geq 3$ and let $p$ be prime.
    Define $\psi_p\colon V_n^\mathrm{mon}(\mathbb{F}_p) \to \mathbb{C}$ by setting $\psi_p(f) = 1$
    if $f$ is not squarefree and $0$ otherwise.  Then $\widehat\psi_p^\mathrm{mon}(\mathbf{0}) = 1/p$, where $\widehat\psi_p^\mathrm{mon}$ is defined as in \S\ref{FTsec}, and
        \[
            \widehat\psi_p^\mathrm{mon}(\mathbf{v}) \ll p^{-2}
        \]
    for $\mathbf{v} \ne \mathbf{0}$.
\end{lemma}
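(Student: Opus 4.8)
The plan is to compute the Fourier transform of the non-squarefree indicator by passing to its complement, the squarefree indicator, whose structure over $\mathbb{F}_p$ is completely understood. Write $\psi_p = 1 - \mu_p^2$ on $V_n^\mathrm{mon}(\mathbb{F}_p)$, where $\mu_p^2$ is the indicator of squarefree monic polynomials of degree exactly $n$. Since the all-ones function $1$ has Fourier transform supported at $\mathbf{0}$ (with value $1$), we get $\widehat\psi_p^\mathrm{mon}(\mathbf{0}) = 1 - \widehat{\mu_p^2}(\mathbf{0})$ and $\widehat\psi_p^\mathrm{mon}(\mathbf{v}) = -\widehat{\mu_p^2}(\mathbf{v})$ for $\mathbf{v} \neq \mathbf{0}$. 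The classical count of squarefree monic polynomials of degree $n \geq 2$ over $\mathbb{F}_p$ is $p^n - p^{n-1}$, so $\widehat{\mu_p^2}(\mathbf{0}) = (p^n - p^{n-1})/p^n = 1 - 1/p$, giving $\widehat\psi_p^\mathrm{mon}(\mathbf{0}) = 1/p$ immediately.

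It therefore remains to bound $\widehat{\mu_p^2}(\mathbf{v})$ for $\mathbf{v} \neq \mathbf{0}$. The key step is to use the standard identity $\mu_p^2(f) = \sum_{g^2 \mid f} \mu_p(g)$, valid for monic $f$, where the sum ranges over monic $g \in \mathbb{F}_p[x]$ with $g^2 \mid f$. Substituting and interchanging the order of summation, one writes $f = g^2 h$ with $g, h$ monic, $\deg g = k$, $\deg h = n - 2k$, so that
\[
    \widehat{\mu_p^2}(\mathbf{v})
        = \frac{1}{p^n} \sum_{0 \leq k \leq n/2} \mu_p\text{-weighted sum over }g\text{ of degree }k
          \sum_{\substack{h \text{ monic} \\ \deg h = n - 2k}} e_p(\langle g^2 h, \mathbf{v}\rangle).
\]
For each fixed $g$ with $\deg g = k < n/2$, the inner sum over $h$ of degree $n-2k \geq 1$ is a sum of $e_p$ of a nontrivial linear form in the coefficients of $h$ (nontrivial because $\mathbf{v} \neq \mathbf{0}$ and multiplication by the fixed polynomial $g^2$ is injective on coefficient vectors up to the relevant truncation), which either vanishes or is $O(p^{\deg h - 1})$; summing the trivial bound $p^{\deg h}$ only when the linear form degenerates. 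The dominant term is $k=0$, i.e.\ $g=1$, $h = f$ of degree $n$: there the sum is $\sum_{f \text{ monic}, \deg n} e_p(\langle f, \mathbf{v}\rangle)$, which is $p^{n-1}$ if $\mathbf{v}$ has support only in the degenerate coordinate and $0$ otherwise — but one must be slightly careful, since this term is exactly what the $\widehat{\text{const}}$ piece should not contribute to; tracking it shows it is genuinely $O(p^{n-2})$ once $\mathbf{v} \neq \mathbf{0}$, because for monic $f$ the top coefficient is fixed and a nonzero $\mathbf{v}$ forces a nontrivial character sum over the remaining $n$ free coefficients, giving either $0$ or $p^{n-1}$, and the $p^{n-1}$ case arises only when all but one coordinate of $\mathbf{v}$ vanish — but then the $k=1$ terms must be re-examined. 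The cleanest route is to observe that $\widehat{\mu_p^2}(\mathbf v) = \widehat{1}(\mathbf v) - \widehat{\psi_p^{\mathrm{mon}}}(\mathbf v)$ is not circular; instead bound directly: each term with $k \geq 1$ contributes at most $p^{k} \cdot p^{n-2k} = p^{n-k} \leq p^{n-1}$ before normalization, and after dividing by $p^n$ this is $O(1/p)$, which is too weak.

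To reach the sharp $p^{-2}$ one must extract cancellation in the $k=0$ term itself. The hard part will be showing that the $k=0$ contribution, $\frac{1}{p^n}\sum_{f\text{ monic},\deg n} e_p(\langle f,\mathbf v\rangle)$, combined with the $k=1$ contribution $\frac{1}{p^n}\sum_{\deg g =1}\mu_p(g)\sum_{\deg h = n-2}e_p(\langle g^2 h,\mathbf v\rangle)$, has the right order — more precisely, after writing $\mathbf{v}=(v_0,\dots,v_{n-1})$ (dual to the coefficients $a_0,\dots,a_{n-1}$), the $k=0$ sum is $p^{n-1}$ exactly when $v_0=\dots=v_{n-1}=0$, which is excluded, hence it is $0$ for all $\mathbf v\neq\mathbf 0$ — wait, that gives $\widehat{\mu_p^2}(\mathbf v)$ supported on the $k\geq 1$ range, where the count of pairs $(g,h)$ is $O(p^{n-2})$ total (since $\sum_{k\geq 1}p^k p^{n-2k}=\sum_{k\geq1}p^{n-k}\leq p^{n-1}+p^{n-2}+\dots\leq 2p^{n-1}$, still only $O(1/p)$). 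So genuine cancellation among the $g$ of degree $1$ is required: for fixed $\mathbf v$, summing $\mu_p(g)=-1$ over the $p$ monic linear $g=x-c$ against $e_p(\langle (x-c)^2 h,\mathbf v\rangle)$ and then over $h$, one gets a complete exponential sum in $c$ of a polynomial of bounded degree, bounded by $O(\sqrt p)$ by Weil — but this must be carried out uniformly and combined with the $\deg h$ sum. I expect this interplay — isolating which linear forms degenerate and invoking Weil-type square-root cancellation for the surviving $g$-sums — to be the main obstacle; the bookkeeping of degenerate coordinate patterns for $\mathbf{v}$ is where the argument could go wrong, and it is likely cleaner to cite or adapt the explicit evaluation of $\widehat{\mu_p^2}$ already present in the literature (e.g.\ along the lines of the function-field computations underlying Lemma~\ref{lem:fourier-mobius}) rather than redo it from scratch.
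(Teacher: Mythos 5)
Your setup is exactly the paper's: write $\psi_p = 1 - \mathbf{1}_{\mathrm{sf}}$, get $\widehat\psi_p^\mathrm{mon}(\mathbf{0}) = 1/p$ from the count $p^n - p^{n-1}$ of squarefree monics, expand $\mathbf{1}_{\mathrm{sf}}(f) = \sum_{g^2 \mid f}\mu_p(g)$, and note that the $g=1$ term vanishes when $\mathbf{v} \neq \mathbf{0}$. But there is a genuine gap at the crucial step, and you flag it yourself. The missing observation is that for each fixed monic $g$ of degree $d$, the inner sum over monic $h$ of degree $n-2d$ is a complete character sum in the $n-2d$ free coefficients of $h$ with phase $\langle h, T_{g^2}\mathbf{v}\rangle$, where $T_{g^2}\colon \mathbb{F}_p^{n}\to\mathbb{F}_p^{n-2d}$ is the adjoint of multiplication by $g^2$; hence it equals $p^{n-2d}$ if $T_{g^2}\mathbf{v}=\mathbf{0}$ and is $0$ otherwise. (Your parenthetical claim that the linear form in $h$ is automatically nontrivial ``because multiplication by $g^2$ is injective'' is incorrect: what matters is the adjoint map, whose kernel has dimension $2d$, so a nonzero $\mathbf{v}$ can perfectly well be annihilated by $T_{g^2}$.) Granting this, the range $d\ge 2$ is handled by the trivial bound $\sum_{d\ge 2} p^{d}\cdot p^{-2d} \ll p^{-2}$, and for $d=1$, writing $g=x+\alpha$, the condition $T_{(x+\alpha)^2}\mathbf{v}=\mathbf{0}$ is a system of polynomial equations of degree at most $2$ in $\alpha$, at least one of which is not identically zero because $\mathbf{v}\neq\mathbf{0}$ (each row of $T_{(x+\alpha)^2}$ has entries $\alpha^2,\,2\alpha,\,1$ in consecutive positions); hence at most $2$ values of $\alpha$ survive and the $d=1$ contribution is at most $2p^{-2}$. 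No Weil-type cancellation is needed anywhere: the bound is pure counting of the $g$ for which the complete $h$-sum fails to vanish.

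Your proposed fallback---square-root cancellation in the complete sum over linear $g=x-c$ via Weil---does not, as sketched, reach the stated bound: estimating the $c$-sum by $O(\sqrt p)$ and then summing trivially over the $p^{n-2}$ choices of $h$ gives only $O(p^{-3/2})$ after dividing by $p^n$, which falls short of $p^{-2}$. To do better you would have to exploit the structure of the $h$-sum first, which is precisely the vanishing/counting argument above. Since you explicitly defer this ``main obstacle'' to the literature rather than resolving it, the proof is incomplete as written, even though the reduction steps preceding it are correct and coincide with the paper's.
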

\begin{proof}
    For $n \geq 2$, the number of monic, squarefree polynomials of degree $n$ over $\mathbb{F}_p$ is
    $p^n - p^{n-1}$.  Thus the number of polynomials that are not squarefree is $p^{n-1}$,
    which yields the claim about $\widehat\psi_p^\mathrm{mon}(\mathbf{0})$, since
        \[
            \widehat\psi_p^\mathrm{mon}(\mathbf{0})
                = \frac{1}{p^n} \sum_{\substack{ f \in V_n^\mathrm{mon} \\ f \text{ not squarefree}}} 1.
        \]
    For
    $\mathbf{v} \ne \mathbf{0}$, we note that $\psi_p(f) = 1 - \mathbf{1}_\mathrm{sf}(f)$, where
    $\mathbf{1}_{\mathrm{sf}}$ is the characteristic function of squarefree polynomials.  Thus for $\mathbf{v} \ne \mathbf{0}$,
    $\widehat\psi_p^{\mathrm{mon}}(\mathbf{v}) = - \widehat{\mathbf{1}}_{\mathrm{sf}}^{\mathrm{mon}}(\mathbf{v})$.
    Mimicking the combinatorial, inclusion-exclusion proof counting squarefree integers, we obtain
        \begin{align*}
            \widehat\psi_p^\mathrm{mon}(\mathbf{v})
                &= \frac{-1}{p^n}
                   \sum_{\substack{ f \in V_n^\mathrm{mon}(\mathbb{F}_p)
                                   \\ f \text{ squarefree}}}
                   e_p( \langle f,\mathbf{v} \rangle_\mathrm{mon})
                \\
                &= \frac{-1}{p^n}
                   \sum_{0 \leq d \leq n/2}
                   \sum_{g \in V_d^\mathrm{mon}(\mathbb{F}_p)} \mu(g)
                   \sum_{f \in V_{n-2d}^\mathrm{mon}(\mathbb{F}_p)}
                   e_p( \langle fg^2,\mathbf{v} \rangle_\mathrm{mon})
                \\
                &= \frac{-1}{p^n}
                   \sum_{0 \leq d \leq n/2}
                   \sum_{g \in V_d^\mathrm{mon}(\mathbb{F}_p)} \mu(g)
                   \sum_{f \in V_{n-2d}^\mathrm{mon}(\mathbb{F}_p)}
                   e_p( \langle f,T_{g^2} \mathbf{v} \rangle_\mathrm{mon}),
        \end{align*}
    where $T_{g^2} \colon \mathbb{F}_p^n \to \mathbb{F}_p^{n-2d}$ is the map adjoint to the (linear)
    map corresponding to multiplication by $g^2$.  The interior sum is a complete sum over
    all polynomials of degree $n-2d$, and hence is $0$ unless $T_{g^2} \mathbf{v} = \mathbf{0}$.
    When $T_{g^2} \mathbf{v} = \mathbf{0}$, the interior summation is $p^{n-2d}$.  Noting also
    that $T_1 \mathbf{v} = \mathbf{v} \ne \mathbf{0}$, it follows that
        \[
            \widehat\psi_p^\mathrm{mon}(\mathbf{v})
            = -\sum_{1 \leq d \leq n/2}
            \sum_{\substack{g \in V_d^\mathrm{mon}(\mathbb{F}_p)
                            \\ T_{g^2} \mathbf{v} = \mathbf{0}}}
            \frac{\mu(g)}{p^{2d}}.
        \]
    We can trivially bound the sum over $d \geq 2$ by ignoring the condition that $T_{g^2}
    \mathbf{v} = \mathbf{0}$,
        \[
            \sum_{2 \leq d \leq n/2} \sum_{g \in V_d^\mathrm{mon}(\mathbb{F}_p)} \frac{1}{p^{2d}}
            \leq
            \sum_{2 \leq d \leq n/2} \frac{1}{p^d} \ll \frac{1}{p^2},
        \]
    which is sufficient.  If $d=1$, then $g = x+\alpha$ for some $\alpha \in \mathbb{F}_p$, and the
    $(n-2)\times n$ matrix $T_{g^2}$ may be written as
        \[
            T_{(x+\alpha)^2}
                = \begin{pmatrix}
                    \alpha^2 & 2\alpha & 1 & 0 & \cdots & 0 \\
                    0 & \alpha^2 & 2\alpha & 1 & \cdots & 0 \\
                    \vdots & 0 & \ddots & \ddots & \ddots & 0 \\
                    0 & \cdots & 0 & \alpha^2 & 2\alpha & 1
                \end{pmatrix}.
        \]
    Since $\mathbf{v} \ne \mathbf{0}$, the equation $T_{g^2} \mathbf{v} = \mathbf{0}$ becomes a system of at most
    quadratic equations in $\alpha$.  This system may or may not have any solutions in $\alpha$, but
    by considering a single non-zero equation, it follows that it admits at most $2$.  Thus the
    contribution from terms with $d=1$ is at most $2p^{-2}$, which is sufficient.
\end{proof}

\begin{remark}\label{largecoefficients}
    Using the argument of Lemma~\ref{lem:squarefree-fourier} but being more careful, it is possible
    to be more precise about the phases $\mathbf{v}$ at which the Fourier transform
    $\lvert \widehat\psi_p^\mathrm{mon}(\mathbf{v})\rvert \gg p^{-2}$, and in general, to identify
    the phases at which the Fourier transform admits worse than the expected
    square-root cancellation.
    We do not presently see a way to exploit this in our proof of
    Theorems~\ref{thm:almost-prime-polynomial}
    and~\ref{thm:almost-prime-fields}, however.
\end{remark}

We are now ready to prove Theorem~\ref{thm:almost-prime-polynomial}. We apply the almost prime sieve
as described by Friedlander and Iwaniec~\cite[Theorem~25.1]{friedlander2010opera}. For convenient
reference, we restate that result here.

\begin{proposition}[Theorem~25.1 of~\cite{friedlander2010opera}]\label{prop:almost_prime_sieve}
    Let $\{ a_n \}$ be a sequence of non-negative numbers which satisfy the linear sieve
    conditions~\cite[(1.2), (5.38)]{friedlander2010opera},
      \begin{equation}\label{eq:linear_sieve_requirements}
          \sum_{\substack{m \leq x \\ m \equiv 0 \Mod d}} a_m
          =
          g(d) X + R_d(x)
          , \qquad \text{and} \qquad
          \prod_{w \leq p < z} \big(1 - g(p)\big)^{-1}
          \leq
          K \Big( \frac{\log z}{\log w} \Big),
      \end{equation}
    for a constant $K > 1$ and any $z > w \geq 2$, where $X$ is to be regarded
    as an approximation to $\sum_{m \leq x} a_n$, the index $p$ runs over
    primes, and $g(d)$ is a multiplicative function.
    Suppose that the remainder terms $R_d(x)$
    satisfy~\cite[(25.7)]{friedlander2010opera}
      \begin{equation}\label{eq:remainder_requirements}
          R(x, D | N)
          :=
          \sum_{d \leq D} \Big\lvert \sum_{n \leq N} \alpha_n R_{dn}(x) \Big\rvert
          \ll X (\log x)^{-3}
      \end{equation}
    for any complex coefficients $\lvert \alpha_n \rvert \leq 1$, and where $D$, $N$
    satisfy~\cite[(25.25)]{friedlander2010opera}
      \begin{equation}\label{eq:DN_requirements}
          D \geq N^{3^r}, \quad DN \geq x^{1/\Delta_r + \epsilon},
      \end{equation}
    in which
      \begin{equation*}
          \Delta_r := r + \frac{1}{\log 3} \log \big( \tfrac{3}{4}(1 + 3^{-r}) \big).
      \end{equation*}

    Let $P(z) := \prod_{p < z} p$ and $V(z) := \prod_{p < z} (1 - g(p))$. Then
    \begin{equation}
      \sum_{\substack{n \leq x \\ (n, P(z)) = 1 \\ \omega(n) \leq r}} a_n
      \asymp
      X V(x)
    \end{equation}
    for $z = (DN)^{\frac{1}{4}}$, and the implied constant depends on $r$ and $\epsilon$.
\end{proposition}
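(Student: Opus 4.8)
This proposition is a verbatim restatement of Theorem~25.1 of \cite{friedlander2010opera}, so the plan is twofold: to recall how it is obtained from the weighted linear sieve, and --- what is genuinely required on our end --- to confirm that the hypotheses as we have phrased them match those of \cite[\S25]{friedlander2010opera}. I would begin by organizing the sieve data in the standard way: the sequence $\mathcal{A} = (a_m)_{m \le x}$ of non-negative weights, the set of all primes as the sifting set, the multiplicative density $g$ recording the local proportion of $m$ divisible by $d$, and the remainder $R_d(x)$ defined by the first identity of \eqref{eq:linear_sieve_requirements}. The second part of \eqref{eq:linear_sieve_requirements} is precisely the statement that the sieve has dimension $\kappa = 1$ with constant $K$, which is what permits the use of the Jurkat--Richert bounds for the linear sieve and, crucially, of Iwaniec's well-factorable refinement; the auxiliary variable $N$ and the coefficients $\alpha_n$, $|\alpha_n| \le 1$, in \eqref{eq:remainder_requirements} serve exactly to repackage the admissible level of distribution as $DN$ in place of $D$, by absorbing a short inner sum of length $N$ into the well-factorable sieve weights.

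Granting the linear sieve at level $DN$, one upgrades ``$m$ has no prime factor below $z$'' to ``$m$ has at most $r$ prime factors'' by the Diamond--Halberstam--Richert weighted sieve. Concretely, I would study a weighted sum $\sum_{m \le x,\ (m, P(z)) = 1} a_m\, w(m)$ with Richert-type logarithmic weights $w(m)$ of the shape $1 - c \sum_{z \le p < y,\ p \mid m} \bigl(1 - \tfrac{\log p}{\log y}\bigr)$, designed so that $w(m) \le 0$ once $\omega(m)$ exceeds $r$; then a lower bound of the expected size $X V(x)$ for this weighted sum forces $\gg X V(x)$ values of $m \le x$ with $(m, P(z)) = 1$ and $\omega(m) \le r$. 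Evaluating the weighted sum separates a main term, furnished by the linear sieve lower bound for $S(\mathcal{A}, z)$, from correction terms involving the primes $p \mid m$ with $p \ge z$, which are dismantled by iterating Buchstab's identity. Each Buchstab iteration erodes the usable level of distribution, and performing $r$ of them is exactly what produces the compounded constraint $D \ge N^{3^r}$ in \eqref{eq:DN_requirements}, while the sieving limit $\Delta_r$ is calibrated so that at level $DN \ge x^{1/\Delta_r + \epsilon}$, with $z = (DN)^{1/4}$, the resulting estimate stays simultaneously positive and of order $X V(x)$ --- yielding the two-sided $\asymp X V(x)$.

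The only real obstacle, and the only point demanding care, is the bookkeeping identifying our hypotheses with those of \cite{friedlander2010opera}: that the remainder condition \eqref{eq:remainder_requirements}, with summation over $d \le D$, inner length $N$, arbitrary $|\alpha_n| \le 1$, and savings $(\log x)^{-3}$, is the same level-of-distribution input used there; that our $\Delta_r$ and the relations \eqref{eq:DN_requirements} coincide with \cite[(25.25)]{friedlander2010opera}; and that our $X$ is the same normalization of $\sum_{m \le x} a_m$. Once these identifications are made, the conclusion is word-for-word theirs, and the proof consists of invoking it.
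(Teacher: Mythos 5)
Your proposal is correct and takes essentially the same route as the paper: this proposition is quoted verbatim from Theorem~25.1 of Friedlander--Iwaniec and the paper offers no proof of its own, only a remark correcting a typo (the conditions on $D$, $N$ are (25.25), not (25.27), of that book), so the substance is exactly the hypothesis-matching you describe, with your sketch of the weighted sieve and well-factorable remainders serving as accurate but optional background.
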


\begin{remark}
  There is a small typo in the statement of this theorem in~\cite{friedlander2010opera}.
  In their theorem statement, $D$ and $N$ need to satisfy (25.25), and not (25.27). Note also
  that they use the notation $\nu(\cdot)$ instead of $\omega(\cdot)$.
\end{remark}

\subsection*{Proof of Theorem~\ref{thm:almost-prime-polynomial}}
    We apply the almost prime sieve as stated in Proposition~\ref{prop:almost_prime_sieve}.

    Let $\phi\colon \mathbb{R}^n \to \mathbb{R}$ be a non-negative Schwartz function
    supported on $[-1,1]^n$. For $H \geq 2$, let $\phi_H(\mathbf{v}) = \phi(\mathbf{v}/H)$. Abusing
    notation, by identifying $V_n^\mathrm{mon}(\mathbb{R})$ with $\mathbb{R}^n$, we may regard
    $\phi$ and $\phi_H$ as Schwartz functions on $V_n^\mathrm{mon}(\mathbb{R})$. For any integer
    $m \geq 1$, let
        \[
            a_m
            :=
            \sum_{\substack{f \in V_n^\mathrm{mon}(\mathbb{Z}) \\ \Disc(f) = \pm m}} \phi_H(f).
        \]
    Since the discriminant of a polynomial in $V_n^\mathrm{mon}(\mathbb{Z})$ of height at most $H$
    is $O_n(H^{2n-2})$, the sequence $a_m$ is supported on integers $m \leq x$ for
    some $x \asymp_n H^{2n-2}$.  Let $d \geq 1$ be a squarefree integer and define
    $\psi_d := \prod_{p \mid d} \psi_p$, where $\psi_p$ is as in Lemma~\ref{lem:squarefree-fourier}.
    Lemma~\ref{lem:poly-disc} implies that $p \mid \Disc(f)$ exactly when $\psi_p(f) = 1$.
    It follows from Poisson summation that
        \begin{align*}
            \sum_{\substack{m \leq x \\ d \mid m}} a_m
                &= \sum_{f \in V_n^\mathrm{mon}(\mathbb{Z})} \phi_H(f) \psi_d(f) \\
                &= H^n \sum_{\mathbf{v} \in \mathbb{Z}^n}
                   \hat\phi\left( \frac{\mathbf{v}H}{d}\right)
                   \widehat\psi_d^\mathrm{mon}(\mathbf{v}) \\
                &= \frac{H^n}{d} \hat\phi(\mathbf{0}) + O_{\phi}(d^{n-2}),
        \end{align*}
    where the last line follows from Lemma~\ref{lem:squarefree-fourier} and
    Lemma~\ref{lem:smooth-fourier-sum}. Recalling Mertens' famous theorem that $\prod_{p \leq x}(1 -
    \frac{1}{p}) = (e^{-\gamma} + o(1))/\log x$, we see that $\{ a_m \}$ satisfies the linear sieve
    conditions~\eqref{eq:linear_sieve_requirements} with $g(d) = 1/d$ and
    $X = H^n \hat\phi(\mathbf{0})$.

    Moreover, the remainders
        \[
            R_d(x) := \sum_{\substack{m \leq x \\ d \mid m}} a_m - \frac{H^n}{d} \hat\phi(\mathbf{0})
        \]
    evidently satisfy
        \[
            R(x, D | 1)
            \leq
            \sum_{d \leq D} \lvert R_d(x) \rvert
            \ll_{\phi}
            D^{n-1}
        \]
    for any $D \geq 1$.  These remainders are $\ll_{\phi, n} X / (\log x)^3$ provided that
    $D \ll_n H^{n/(n-1)} / (\log H)^{3/(n-1)} \asymp x^{n/2(n-1)^2} / (\log x)^{3/(n-1)}$.
    For any such $D$ and $N = 1$, we thus have that $\{ a_m \}$
    satisfies~\eqref{eq:remainder_requirements}.

    The almost prime sieve (Proposition~\ref{prop:almost_prime_sieve}) then shows that for any $r$
    for which~\eqref{eq:DN_requirements} is satisfied, we have the asymptotic
        \[
            \sum_{\substack{m \leq x \\ \omega(m) \leq r}} a_m
            \geq
            \sum_{\substack{m \leq x \\ (m, P(z)) = 1 \\ \omega(m) \leq r}} a_m
            \asymp_{n,\phi} \frac{H^n}{\log H},
        \]
    and~\eqref{eq:DN_requirements} is satisfied when
        \begin{equation}\label{eqn:r-cond}
            \frac{1}{\Delta_r}
            <
            \frac{n}{2(n-1)^2}, \quad \Delta_r
            :=
            r + \frac{1}{\log 3}\log\left(\frac{3}{4}(1+3^{-r})\right).
        \end{equation}
    Noting that $r + \frac{\log(3/4)}{\log 3} < \Delta_r < r$, and
    that $\frac{\log(3/4)}{\log 3} = -0.26\ldots$, the condition~\eqref{eqn:r-cond} is satisfied if
        \[
            r > \frac{2(n-1)^2}{n} + 0.27 = 2n - 3.73 + \frac{2}{n}.
        \]
    In particular, for $n \geq 3$, this is true when $r \geq 2n-3$.  Since
        \[
            \#\{ f \in V_n^\mathrm{mon}(\mathbb{Z}) : \mathrm{ht}(f) \leq H, \omega(\Disc(f)) \leq r\}
                \gg_{\phi} \sum_{\substack{m \leq x \\ \omega(m) \leq r}} a_m,
        \]
    the theorem follows.
\qed{}
\medskip{} 

To go from Theorem \ref{thm:almost-prime-polynomial} to Theorem \ref{thm:almost-prime-fields},
notice that the lower bound in Theorem \ref{thm:almost-prime-polynomial} is larger than the error
term in the Hilbert irreducibility theorem bounds~\eqref{eqn:dietmann-an}
and~\eqref{eq:thm_monic_compare}.
Consequently, the same lower bound holds for the number
of irreducible polynomials with almost prime discriminant, as well as for the number of $S_n$
polynomials with almost prime discriminant.  In particular, almost all of the polynomials produced
by Theorem~\ref{thm:almost-prime-polynomial} cut out $S_n$ fields of degree $n$ with almost prime
discriminant.  To prove Theorem~\ref{thm:almost-prime-fields}, the key is to understand the number
of different polynomials that cut out the same field.  For this, we recall a result of Lemke Oliver
and Thorne~\cite{LOTPolys}.

\begin{lemma}\label{lem:field-multiplicity}
    Let $F$ be a number field of degree $n$, and let
        \[
            M_F(H) = \#\{ f \in \mathbb{Z}[x] : \mathbb{Q}[x]/(f(x)) \simeq F, \mathrm{ht}(f) \leq H\}.
        \]
    Then $M_F(H) \ll_n H (\log H)^{n-1} \Disc(F)^{\frac{-1}{n^2-n}}$, and in particular $M_F(H) \ll_n H (\log H)^{n-1}$.
\end{lemma}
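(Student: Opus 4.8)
The plan is to recast the problem as a lattice-point count for $\mathcal{O}_F$ inside the Minkowski space $F_\infty:=F\otimes_{\mathbb{Q}}\mathbb{R}$, handled by the geometry of numbers; the discriminant factor will be forced out by the elementary a priori bound that $M_F(H)>0$ implies $\Disc(F)\ll_n H^{n^2-n}$.

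First I would reduce to counting algebraic integers. Given $f$ of degree $n$ with $\mathbb{Q}[x]/(f)\simeq F$, fix an isomorphism, let $\theta\in F$ be the image of $x$, and set $\alpha:=a\theta\in\mathcal{O}_F$, where $a$ is the leading coefficient of $f$; then $\mathbb{Q}(\alpha)=F$, the pair $(\alpha,a)$ determines $f$, and each $f$ yields at most $n$ such pairs. Because the coefficients of $f$ are, up to powers of $a$, the elementary symmetric functions of the $\theta^{(i)}$, the bound $\Ht(f)\le H$ imposes divisibility conditions $a^{\,j-1}\mid\sigma_j(\alpha)$ on the coefficients $\sigma_j(\alpha)$ of the minimal polynomial of $\alpha$ together with, via Mahler's inequality $\prod_i\max(1,|\theta^{(i)}|)\le\sqrt{n+1}\,H/|a|$, the containment of $\alpha$ in the ``Mahler region''
\[
    \mathcal{R}_a:=\Bigl\{\gamma\in F_\infty\ :\ \textstyle\prod_i\max\!\bigl(|a|,|\gamma^{(i)}|\bigr)\le\sqrt{n+1}\,|a|^{\,n-1}H\Bigr\}.
\]
In particular $M_F(H)=0$ unless the monic minimal polynomial $g_\alpha$ of $\alpha$, whose roots all have size $\le\sqrt{n+1}\,H$, has $|\Disc(g_\alpha)|\ll_n H^{n^2-n}$; since $\Disc(F)\mid\Disc(g_\alpha)$ this gives the a priori bound.

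It then remains to count $\mathcal{O}_F$-points (along the residue classes modulo $a$ cut out by the divisibility conditions) in $\mathcal{R}_a$ and sum over $a$. The region $\mathcal{R}_a$ is a rescaled classical Mahler-measure region, of volume $\asymp_n|a|^{\,n-1}H(\log H)^{n-1}$, while $\mathcal{O}_F$ has covolume $2^{-r_2}\sqrt{|\Disc(F)|}$ in $F_\infty$; the resulting geometry-of-numbers main term, summed over $a$, is $\ll_n H(\log H)^{n-1}/\sqrt{|\Disc(F)|}$, which, since $\sqrt{|\Disc(F)|}\ge|\Disc(F)|^{1/(n^2-n)}$ for $n\ge3$, already has (a stronger form of) the claimed shape.

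The hard part is controlling the error terms by the same quantity. A naive Davenport/Lipschitz estimate does not suffice: $\mathcal{R}_a$ is not contained in a box of side $\asymp H$ in only a bounded number of directions --- it has thin ``tentacles'' along the coordinate hyperplanes --- so the crude boundary error is of size roughly $H(\log H)^{n-2}$, which overwhelms the target once $\Disc(F)$ exceeds a fixed power of $\log H$. The remedy is that these tentacles contain few elements of $\mathcal{O}_F$ that generate $F$: an algebraic integer cannot have many small conjugates, as $|N(\alpha)|\ge1$, and an $\alpha$ with several conjugates of near-maximal size is confined to $O_n(1)$ cosets of a large sublattice of $\mathcal{O}_F$. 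Combining this with the divisibility conditions and with the a priori discriminant bound --- which truncates the ranges of $a$ and of the successive minima of $\mathcal{O}_F$ that enter the error --- the contribution of the degenerate directions collapses to the claimed bound; I would organize this as an induction on the dimension of the degenerate stratum, each stratum being a Mahler-region count in a proper subfield of $F$ through which the relevant $\alpha$ factor, with the rational line excluded at the base case by primitivity of $\theta$. Finally, passing to content-$1$ polynomials and summing over the content (which vanishes below height $\asymp\Disc(F)^{1/(n^2-n)}$) costs at most one further $\log H$, absorbed into the exponent.
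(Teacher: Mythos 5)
The paper does not actually reprove this lemma: its ``proof'' is a citation of Lemke Oliver--Thorne \cite{LOTPolys}, so what you are proposing is a from-scratch proof of that external input. Your opening reductions are correct and are indeed in the spirit of the cited work: passing from $f$ to the pair $(a,\alpha)$ with $\alpha=a\theta\in\mathcal{O}_F$, the Mahler-measure containment defining $\mathcal{R}_a$, the divisibility constraints $a^{j-1}\mid\sigma_j(\alpha)$, and the a priori bound $\Disc(F)\ll_n H^{n^2-n}$ (so $M_F(H)=0$ unless $H\gg_n \Disc(F)^{1/(n^2-n)}$) are all fine.

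The gap is that the lattice-point count itself---the only hard part---is left as a plan, and the plan as stated does not work. The exponent $-1/(n^2-n)$ rather than $-1/2$ in the statement is precisely the signal that the ``tentacle'' contributions you defer are, in general, the dominant term, not an error to be absorbed into $H(\log H)^{n-1}/\sqrt{\Disc(F)}$. Concretely: (i) generators $\alpha$ of $F$ lying in a tentacle (one large archimedean coordinate, the rest bounded) do \emph{not} lie in, or ``factor through,'' any proper subfield, so your induction in which each degenerate stratum ``is a Mahler-region count in a proper subfield'' is not meaningful; a lower-dimensional stratum of $\mathcal{O}_F$ is just a lower-rank sublattice, not the ring of integers of a subfield, and the norm-spacing argument ($|N(\alpha-\beta)|\ge 1$) gives a tentacle count with no $\sqrt{\Disc(F)}$ saving at all, so some quantitative use of the lower bound $\mathrm{house}(\alpha)\gg_n\Disc(F)^{1/(n^2-n)}$ for generators is needed and is nowhere supplied; your assertion that such $\alpha$ are ``confined to $O_n(1)$ cosets of a large sublattice'' is likewise unsubstantiated. (ii) The conditions $a^{j-1}\mid\sigma_j(\alpha)$ are nonlinear congruence conditions on $\alpha$; to make the main term summable over $a$ you need them to have density about $a^{-n(n-1)/2}$ \emph{uniformly} inside the thin region $\mathcal{R}_a$, which is again a lattice-point problem of the same difficulty, not a triviality. (iii) Your final reduction to content-one polynomials and summation over the content costs an extra factor $\log H$, which the stated bound $H(\log H)^{n-1}\Disc(F)^{-1/(n^2-n)}$ does not permit (and is unnecessary, since the $(a,\alpha)$ parametrization already treats arbitrary leading coefficient and content). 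So as written the proposal is an outline whose decisive step is missing; either carry out the tentacle analysis in full---which is essentially the content of \cite[Theorem 2.1, Lemma 3.1]{LOTPolys}---or do as the paper does and quote those results.
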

\begin{proof}
    This follows by combining \cite[Theorem 2.1]{LOTPolys} and \cite[Lemma 3.1]{LOTPolys}.
\end{proof}

\subsection*{Proof of Theorem \ref{thm:almost-prime-fields}}

We first prove the statement with the lower bound $\gg_{n,r} X^{1/2} / (\log X)^n$, as it is almost immediate from Theorem \ref{thm:almost-prime-polynomial}, which produces $\gg_{n,r} H^n / \log H$ irreducible polynomials with Galois group $S_n$ whose discriminants have at most $r$ prime factors, and Lemma \ref{lem:field-multiplicity}, which implies that at most $H (\log H)^{n-1}$ of these polynomials can cut out the same field.  In particular, there will be $\gg_{n,r} H^{n-1} / (\log H)^n$ different fields produced, each of which has discriminant $O_n(H^{2n-2})$.  Choosing $H = c X^{1/(2n-2)}$ for a suitable constant $c$ yields the claim.

    To obtain the second claim of the theorem, let $c_n$ be as in the statement of the theorem and suppose $H \geq 2$.
    Then for any $Y \geq 1$, there holds
        \[
            \sum_{\substack{ [F:\mathbb{Q}]=n \\ \Gal(\widetilde{F}/\mathbb{Q}) \simeq S_n \\ \Disc(F) \leq Y}} M_F(H)
                \ll_n H (\log H)^{n-1} \sum_{\substack{ [F:\mathbb{Q}]=n \\ \Gal(\widetilde{F}/\mathbb{Q}) \simeq S_n \\ \Disc(F) \leq Y}}\Disc(F)^{-\frac{1}{n^2-n}}
                \ll_n  H (\log H)^{n-1} Y^{c_n - \frac{1}{n^2-n}}
        \]
    from Lemma \ref{lem:field-multiplicity} and partial summation, where we have used that $c_n \geq 1 > \frac{1}{n^2 - n}$.
    For any $\epsilon > 0$, it follows there is a choice of $Y$ satisfying
        \[
            Y \asymp_{n,\epsilon} H^{\frac{n(n-1)^2}{c_n n (n-1) - 1} - \epsilon}
        \]
    such that
        \[
            \sum_{\substack{ [F:\mathbb{Q}]=n \\ \Disc(F) \leq Y}} M_F(H)
                \ll_{n,\epsilon} H^{n-\epsilon}.
        \]
    This is smaller than the lower bound produced by Theorem \ref{thm:almost-prime-polynomial} on the number of polynomials with almost prime discriminant, almost all of which are irreducible with Galois group $S_n$ by Hilbert irreducibility.  Thus, almost all of the polynomials produced by Theorem \ref{thm:almost-prime-polynomial} cut out degree $n$ $S_n$ extensions $F/\mathbb{Q}$ of discriminant at least $Y$.  For such fields $F$, we have $M_F(H) \ll_n H (\log H)^{n-1} Y^{- \frac{1}{n^2-n}}$ by Lemma \ref{lem:field-multiplicity}.  Dividing the total number of polynomials by this upper bound on the multiplicity, we find
        \begin{align*}
            \#\{F/\mathbb{Q} : [F:\mathbb{Q}] = n,
                &\Gal(\widetilde{F}/\mathbb{Q}) \simeq S_n,
                \omega(\Disc(F)) \leq r, \Disc(F) \ll_n H^{2n-2}\}
            \\
            &\gg_{n,r,\epsilon} H^{n-1} (\log H)^{-n} Y^{\frac{1}{n^2-n}}
            \\
            &\gg_{n,r,\epsilon} H^{n-1 + \frac{n-1}{c_n n(n-1)-1} - \epsilon}.
        \end{align*}
Again choosing $H = c X^{\frac{1}{2n-2}}$ for a suitable constant $c$, the result follows.
\qed{}

\bibliographystyle{alpha}
\bibliography{AIMHIT}

\end{document}